\newtheorem{theorem}{Theorem}[section]
\newtheorem{defn}[theorem]{Definition}
\newtheorem{corollary}[theorem]{Corollary}
\newtheorem{lemma}[theorem]{Lemma}
\newtheorem*{claim*}{Claim}
\newtheorem{conjecture}[theorem]{Conjecture}
\newtheorem{remark}[theorem]{Remark}
\newtheorem{theorem LSV}{Theorem LSV}
\newtheorem*{theorem LSV*}{Theorem LSV}
\newtheorem*{theorem*}{Theorem}
\newtheorem{theorem MTP}{Mass Transference Principle}
\newtheorem*{theorem MTP*}{Mass Transference Principle}
\newtheorem{theorem K}{Khintchine's Theorem}
\newtheorem*{theorem K*}{Khintchine's Theorem}
\newtheorem{theorem J}{Jarn\'{\i}k's Theorem}
\newtheorem*{theorem J*}{Jarn\'{\i}k's Theorem}
\newtheorem{theorem KJ}{Khintchine--Jarn\'{\i}k Theorem}
\newtheorem*{theorem KJ*}{Khintchine--Jarn\'{\i}k Theorem}
\newtheorem{theorem BV1}{Theorem BV1}
\newtheorem*{theorem BV1*}{Theorem BV1}
\newtheorem{theorem BV2}{Theorem BV2}
\newtheorem*{theorem BV2*}{Theorem BV2}
\newtheorem{theorem KG}{Theorem KG}
\newtheorem*{theorem KG*}{Theorem KG}
\newtheorem{theorem IHKG}{Inhomogeneous Khintchine--Groshev Theorem}
\newtheorem*{theorem IHKG*}{Inhomogeneous Khintchine--Groshev Theorem}
\newtheorem{theorem DLN1}{Theorem DLN1}
\newtheorem*{theorem DLN1*}{Theorem DLN1}
\newtheorem{theorem DLN2}{Theorem DLN2}
\newtheorem*{theorem DLN2*}{Theorem DLN2}
\newtheorem{theorem DLN3}{Theorem DLN3}
\newtheorem*{theorem DLN3*}{Theorem DLN3}
\newtheorem{theorem S}{Theorem S}
\newtheorem*{theorem S*}{Theorem S}
\newtheorem*{theorem L*}{Previous results for homogeneous metric Diophantine approximation}
\newtheorem*{theorem LL*}{Previous results for inhomogeneous metric Diophantine approximation}
\newtheorem*{theorem LLL*}{Previous results for multiplicative metric Diophantine approximation}
\numberwithin{equation}{section}
\renewcommand{\le}{\leq}
\renewcommand{\ge}{\geq}
\newcommand{\x}{\mathbf{x}}
\newcommand{\q}{\mathbf{q}}
\def\le{\leqslant} \def\ge{\geqslant}
\def \leq {\le}
\def \geq {\ge}
\newcommand{\Addresses}{{
		\bigskip
		\footnotesize
		
		
		
		
		\medskip
		
		H.~Yu, 
		\textsc{Department of Pure Mathematics and Mathematical Statistics, Centre for Mathematical Sciences, Cambridge, CB3 0WB, UK}\par\nopagebreak
		\textit{E-mail address:} \texttt{hy351@maths.cam.ac.uk}
}}
\title{On the metric theory of multiplicative Diophantine approximation}
\author{ Han Yu \footnote{Supported by the European Research Council (ERC) under the European Union’s Horizon 2020 research and innovation programme (grant agreement No. 803711), and indirectly by Corpus Christi College, Cambridge}}
\date{}
\begin{document}
	\maketitle
	\begin{abstract}
		In 1962, Gallagher proved an higher dimensional version of Khintchine's theorem on Diophantine approximation.  Gallagher's theorem states that for any non-increasing approximation function $\psi:\mathbb{N}\to (0,1/2)$ with $\sum_{q=1}^{\infty} \psi(q)\log q=\infty$ and $\gamma=\gamma'=0$ the following set
		\[
		\{(x,y)\in [0,1]^2: \|qx-\gamma\|\|qy-\gamma'\|<\psi(q) \text{ infinitely often}\}
		\]
		has full Lebesgue measure. Recently, Chow and Technau proved a fully inhomogeneous version (without restrictions on $\gamma,\gamma'$) of the above result.
		
		In this paper, we prove an Erd\H{o}s-Vaaler type result for fibred multiplicative Diophantine approximation. Along the way, via a different method,  we prove a slightly weaker version of Chow-Technau's theorem with the condition that at least one of $\gamma,\gamma'$ is not Liouville. We also extend Chow-Technau's result for fibred inhomogeneous Gallagher's theorem for Liouville fibres.
	\end{abstract}
	\noindent{\small 2010 {\it Mathematics Subject Classification}\/: Primary:11J83,11J20,11K60}
	
	\noindent{\small{\it Keywords and phrases}\/:  Inhomogeneous Diophantine approximation, Metric number theory, multiplicative Diophantine approximation}
	
	\section{Introduction}
	\subsection{Background}
	Motivated by the work of Beresnevich, Haynes and Velani \cite{BHV}, in this paper, we study metric multiplicative Diophantine approximation. We mainly use an elementary method introduced in \cite{Yu2}. We will focus on dimension two, but our strategy also works for higher-dimensional cases. Let $\gamma,\beta,\gamma'$ be three real numbers and $\psi:\mathbb{N}\to \mathbb{R}^+$ be a  function (approximation function). We will be interested in the following set,
	\[
	W(\psi,\beta,\gamma,\gamma')=\{x\in [0,1]: \|qx-\gamma\|\|q\beta-\gamma'\|<\psi(q)\text{ infinitely often}\},
	\]
	where $\|x\|$ for $x\in\mathbb{R}$ is the distance between $x$ and the integer set $\mathbb{Z}.$ More specifically, we want to know under which conditions (for $\psi,\beta,\gamma,\gamma'$) does $W(\psi,\beta,\gamma,\gamma')$ have positive or even full Lebesgue measure. This is a challenging problem in the study of metric Diophantine approximations. Recently, there have been many breakthroughs in this direction, and we will briefly introduce them. The study of $W(\psi,\beta,\gamma,\gamma')$ is closely related to, if not contained in, the study of inhomogeneous Diophantine approximation, where the central object to study is
	\[
	W(\psi,\gamma)=\{x\in [0,1]: \|qx-\gamma\|<\psi(q)\text{ infinitely often}\}.
	\]
	If we define
	\[
	\psi'(q)=\frac{\psi(q)}{\|q\beta-\gamma'\|},
	\]
	then $W(\psi,\beta,\gamma,\gamma')=W(\psi',\gamma).$ If we put $\gamma=0$, then the study of $W(\psi,0)$ is the classical metric Diophantine approximation. It is almost impossible to avoid mentioning the following result,
	\begin{theorem*}[Duffin-Schaeffer conjecture/Koukoulopoulos-Maynard theorem, \cite{DS}, \cite{KM2019}]
		Let $\psi$ be an approximation function with $\sum_q \psi(q)\phi(q)/q=\infty.$ then $W(\psi,0)$ has full Lebesgue measure, otherwise if $\sum_q \psi(q)\phi(q)/q<\infty$ then for Lebesgue almost all $x$, there are only finitely many coprime pairs $(p,q)$ such that $|x-p/q|<\psi(q)/q$. Here, $\phi(.)$ is the Euler phi function.
	\end{theorem*}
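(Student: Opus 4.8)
The plan is to treat the convergence and divergence directions separately, as is standard for statements of Khintchine--Duffin--Schaeffer type. For $q \ge 1$ write
\[
E_q \ = \ \bigcup_{\substack{0 \le p \le q \\ \gcd(p,q)=1}} \Bigl( \tfrac{p}{q} - \tfrac{\psi(q)}{q},\ \tfrac{p}{q} + \tfrac{\psi(q)}{q} \Bigr) \cap [0,1],
\]
so that the set of $x$ admitting infinitely many coprime approximations $|x - p/q| < \psi(q)/q$ is exactly $E := \limsup_{q \to \infty} E_q$. There are $\phi(q)$ admissible numerators, each contributing an interval of length at most $2\psi(q)/q$, whence $|E_q| \le 2\psi(q)\phi(q)/q$, with equality once $\psi(q) \le 1/2$. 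For the convergence half, if $\sum_q \psi(q)\phi(q)/q < \infty$ then $\sum_q |E_q| < \infty$, and the first Borel--Cantelli lemma gives $|E| = 0$; that is, for almost every $x$ only finitely many coprime pairs $(p,q)$ satisfy $|x - p/q| < \psi(q)/q$. This direction is elementary.

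The divergence half is the substance of \cite{KM2019}, and I would follow it. Assume $\sum_q \psi(q)\phi(q)/q = \infty$; after the usual preliminary reductions one may take $|E_q| \asymp \psi(q)\phi(q)/q$, so that $\sum_q |E_q| = \infty$. A zero-one law (Gallagher) reduces the claim $|E| = 1$ to $|E| > 0$, and for the latter one invokes the divergence Borel--Cantelli lemma in Kochen--Stone / Paley--Zygmund form: it suffices to establish the quasi-independence-on-average bound
\[
\sum_{q, r \le Q} |E_q \cap E_r| \ \ll \ \Bigl( \sum_{q \le Q} |E_q| \Bigr)^{2}
\]
for infinitely many $Q$. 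Expanding $|E_q \cap E_r|$, the diagonal terms and the generic off-diagonal terms are harmless, and the problem collapses to bounding weighted sums of $\gcd(q,r)$ --- together with the Euler factors forced by the coprimality constraints --- over pairs $q,r$ in a dyadic range, namely those whose reduced fractions can be simultaneously close. This is exactly the point at which the earlier Pollington--Vaughan method stalls.

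The main obstacle, and the deep input, is this last $\gcd$-sum estimate, carried out in \cite{KM2019} through the theory of \emph{GCD graphs}. One encodes the troublesome pairs $(q,r)$ as the edges of a bipartite graph whose vertices carry the relevant prime-factorisation data, attaches to such a graph a numerical \emph{quality} measuring how badly it violates the target bound, and then runs an iteration: a graph of high quality always contains a sub-GCD-graph of strictly larger quality and comparable size, obtained by a compression and rebalancing of the prime labels that exploits the anatomy of the integers (Mertens-type estimates, the normal order of the number of prime factors, Rankin's trick). Iterating contradicts a trivial upper bound on the quality, so no high-quality GCD graph exists, which is precisely the overlap estimate required above. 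Combined with the zero-one law and the Kochen--Stone lemma this yields $|E| = 1$ and completes the divergence half; the combinatorial iteration on GCD graphs is unquestionably the hard step.
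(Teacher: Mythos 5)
The paper does not prove this statement: it is recorded as a named background theorem with citations to \cite{DS} and \cite{KM2019}, and the body of the paper simply invokes it. So there is no ``paper's own proof'' to compare against. Your convergence half is complete and correct: with $\psi(q)<1/2$ the intervals around distinct reduced fractions $p/q$ are pairwise disjoint, so $|E_q|=2\psi(q)\phi(q)/q$, and the first Borel--Cantelli lemma finishes it. Your divergence half is an accurate outline of the Koukoulopoulos--Maynard argument --- Gallagher's zero-one law to reduce full measure to positive measure, the Kochen--Stone form of Borel--Cantelli to reduce that to quasi-independence on average of the $E_q$, and then the overlap estimate obtained by the GCD-graph quality-increment iteration --- but you explicitly defer the GCD-graph step to \cite{KM2019} rather than carry it out, so this is a roadmap, not a proof. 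That is the right call here: the theorem is being cited as external input, and re-deriving the GCD-graph machinery would mean reproducing most of that paper. Just be aware that as written your proposal establishes only the (elementary) convergence direction from scratch; the divergence direction rests entirely on the cited work, exactly as the paper itself intends.
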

	
	From the above result, the Lebesgue measure of $W(\psi,\beta,0,\gamma')$ can be understood very well. However, when $\gamma\neq 0,$ much less are known.
	
	\begin{theorem LL*}\footnote{This list is not complete.}
		\begin{itemize}
			\item {Sz\"{u}sz's theorem \cite{Szusz}}: If $\psi$ is non-increasing and $\sum_{q=1}^{\infty} \psi(q)=\infty$ then $W(\psi,\gamma)$ has full Lebesgue measure for all real number $\gamma.$
			\item {Ram\'{i}rez's examples \cite{Ramirez}}: Without the monotonicity of the approximation function $\psi$, the condition $\sum_{q=1}^{\infty} \psi(q)=\infty$ alone cannot ensure $W(\psi,\gamma)$ to have positive Lebesgue measure. 
			\item {Extra divergence \cite{Yu}}: For each $\epsilon>0,$ if $\sum^{\infty}_{q=1} q^{-\epsilon} \psi(q)=\infty,$ then for all number $\gamma,$ $W(\psi, \gamma)$ has full Lebesgue measure.
			
			\item {Erd\H{o}s-Vaaler type \cite{Yu2}}: Let $\psi$ be an approximation function with $\psi(q)=O(q^{-1}(\log\log q)^{-2})$ and $\sum_{q=1}^{\infty} \psi(q)=\infty.$ Then for each non-Liouville $\gamma$ we have
			$
			|W(\psi,\gamma)|=1.$
		\end{itemize}
	\end{theorem LL*}
	It is very difficult to use the above results to study $W(\psi,\beta,\gamma,\gamma')$ via $W(\psi',\gamma).$ This is mainly because that the extra conditions (monotonicity, upper bound, extra divergence) are difficult to be tested for approximation functions of form $\psi(q)/\|q\beta-\gamma'\|.$ 
	\begin{theorem LLL*}\footnote{This list only contains results before the year 2019 and it is not complete.}
		
		\begin{itemize}
			\item {Gallagher,\cite{Gallagher62}:} Let $\psi$ be a non-increasing approximation function with $\sum_{q=1}^{\infty} \psi(q)\log q=\infty.$ For Lebesgue almost all $(x,y)\in [0,1]^2$, there are infinitely many integers $q$ with
			\[
			\|qx\|\|qy\|\leq\psi(q).
			\]
			\item {Beresnevich-Haynes-Velani, \cite[Theorem 2.3]{BHV}:} Let $\psi$ be a non-increasing approximation function with $\sum_{q=1}^{\infty} \psi(q)\log q=\infty.$ Then for all real number $\gamma,$ Lebesgue almost all $(x,y)\in [0,1]^2,$ there are infinitely many integers $q$ with
			\[
			\|qx-\gamma\|\|qy\|\leq  \psi(q).
			\]
			
			\item{Chow \cite{C18}, Koukoulopoulos-Maynard \cite{KM2019}:} Let $\psi$ be a non-increasing approximation function with $\sum_{q=1}^{\infty} \psi(q)\log q=\infty.$ Then for all non-Liouville number $\beta$ and all real number $\gamma',$ the set $W(\psi,\beta,0,\gamma')$ has full Lebesgue measure.
		\end{itemize}
	\end{theorem LLL*}
	The central problem in this area is the following conjecture by Beresnevich, Haynes and Velani \cite[Conjecture 2.1]{BHV} which is now proved by Chow and Technau in \cite{CT}.
	\begin{conjecture}\label{BHVC}
		Let $\psi$ be a non-increasing approximation function with $\sum_{q=1}^{\infty} \psi(q)\log q=\infty.$ Then for all real numbers $\gamma, \gamma'$ Lebesgue almost all $(x,y)\in [0,1]^2,$ there are infinitely many integers $q$ with
		\[
		\|qx-\gamma\|\|qy-\gamma'\|\leq  \psi(q).
		\]
	\end{conjecture}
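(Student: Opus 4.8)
The plan is to fibre the problem. By Fubini's theorem the set $\{(x,y)\in[0,1]^2:\|qx-\gamma\|\|qy-\gamma'\|\le\psi(q)\text{ infinitely often}\}$ has Lebesgue measure $\int_0^1|W(\psi,y,\gamma,\gamma')|\,dy$, so it suffices to prove $|W(\psi,y,\gamma,\gamma')|=1$ for almost every $y\in[0,1]$. We may also assume $\sum_q\psi(q)<\infty$: if $\sum_q\psi(q)=\infty$ then Sz\"usz's theorem gives, for almost every $y$, infinitely many $q$ with $\|qy-\gamma'\|<2\psi(q)$, and for any such $q$ the set $A_q(y):=\{x\in[0,1]:\|qx-\gamma\|<\psi(q)/\|qy-\gamma'\|\}$ is all of $[0,1]$ up to a null set, so $\limsup_qA_q(y)=[0,1]$ trivially. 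Thus, writing $\psi'_y(q)=\psi(q)/\|qy-\gamma'\|$, we are reduced to a one--dimensional inhomogeneous statement with a \emph{non--monotone} target function: for almost every $y$, $|W(\psi'_y,\gamma)|=1$, that is $\limsup_qA_q(y)=[0,1]$ modulo null sets. Sz\"usz's theorem does not apply here, so I would instead run the elementary Erd\H{o}s--Vaaler type argument of \cite{Yu2} on the fibre.

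Divergence of the relevant series must be checked for almost every fibre. Because $\psi$ is non--increasing with $\sum_q\psi(q)<\infty$ we have $q\psi(q)\to0$, and Borel--Cantelli ($\sum_q|\{y:\|qy-\gamma'\|<2\psi(q)\}|=\sum_q4\psi(q)<\infty$) gives, for almost every $y$, that $\psi'_y(q)\le 1/2$ for all large $q$ (so $|A_q(y)|\asymp\psi'_y(q)$) and that $\psi'_y(q)\to0$ with only finitely many $q$ exceeding any given level. It remains to show $\sum_q\psi'_y(q)=\infty$ for almost every $y$: the sequence $\{qy\}_{q\le Q}$ equidistributes with small discrepancy for almost every $y$, giving $\#\{q\le Q:\|qy-\gamma'\|<\delta\}\asymp\delta Q$ in the relevant range of $\delta$, hence $\sum_{q\le Q}\|qy-\gamma'\|^{-1}\gg Q\log Q$, and then $\sum_{q\le Q}\psi'_y(q)\gg\sum_{q\le Q}\psi(q)\log q\to\infty$ by partial summation using the monotonicity of $\psi$. (This is the same $\log q$ that appears as the average $\int_0^1\min(1,2\psi(q)/\|qy-\gamma'\|)\,dy\asymp\psi(q)\log(1/\psi(q))$ of the arc--measures, which explains the logarithm in Gallagher's divergence condition.)

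The core step is to turn divergence into full measure for the non--monotone $\psi'_y$. The plan is to prove quasi--independence on average of the events $A_q(y)$ along a suitable lacunary sequence $Q_k$,
\[
\sum_{q,r\le Q_k}\bigl|A_q(y)\cap A_r(y)\bigr|\;\ll\;\Bigl(\sum_{q\le Q_k}|A_q(y)|\Bigr)^2,
\]
then to deduce $|W(\psi'_y,\gamma)|>0$ from the divergence Borel--Cantelli lemma, and finally to upgrade positive measure to full measure via a zero--one law of Cassels--Gallagher type (or the density/self--similarity argument of \cite{Yu2}). Expanding $|A_q(y)\cap A_r(y)|$ one meets, after using the three--distance theorem for $y$ and counting solutions of the pertinent $\gcd(q,r)$-- and $\|(q-r)x\|$--type congruences, a main term of the expected size $|A_q(y)|\,|A_r(y)|$ together with error terms governed by $\|qy-\gamma'\|^{-1}$ and $\|ry-\gamma'\|^{-1}$. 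For almost every $y$ one has $\|qy-\gamma'\|\gg_{\varepsilon}q^{-1-\varepsilon}$ for all large $q$ (Borel--Cantelli, uniformly in $\gamma'$), and only finitely many $q$ with $\psi'_y(q)$ above a prescribed level; these two facts are what one uses to absorb the error terms.

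The hard part — and the reason this route yields Conjecture~\ref{BHVC} only conditionally — is precisely the contribution of those few $q$ at which $\|qy-\gamma'\|$ is abnormally small, where $\psi'_y$ has a ``spike'' and $A_q(y)$ swells towards all of $[0,1]$. The positions of these spikes are dictated by the Ostrowski expansion of $\gamma'$ with respect to the continued fraction of $y$, an arithmetic structure that the soft quasi--independence estimate cannot detect, so an honest implementation requires a Diophantine hypothesis on the data. Since the two coordinates play symmetric roles, one is free to fibre in whichever of $x,y$ carries a non--Liouville inhomogeneity; this yields the conjecture whenever at least one of $\gamma,\gamma'$ is non--Liouville, and a dedicated bookkeeping of the Ostrowski digits recovers the one--dimensional fibred statement even when the fibre itself is Liouville. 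Removing the Liouville restriction entirely appears to require the sharper continued--fraction analysis of Chow--Technau \cite{CT} in place of the elementary overlap estimate.
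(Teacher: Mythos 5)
Be aware first of what the paper actually does with this statement: it is stated as a \emph{conjecture} (due to Beresnevich--Haynes--Velani), the paper attributes its full proof to Chow--Technau \cite{CT}, and the paper's own contribution is only the weaker Corollary \ref{Coro2}, in which $\gamma$ is assumed non-Liouville. Your proposal is, in substance, the same strategy the paper uses for that weaker result: fibre via Fubini, pass to the modified function $\psi'_y(q)=\psi(q)/\|qy-\gamma'\|$, prove divergence of the restricted sum for almost every fibre using equidistribution and the monotonicity of $\psi$, establish quasi-independence on average of the $A_q(y)$, and apply the divergence Borel--Cantelli lemma. You also correctly and honestly identify the obstruction — the sparse $q$ where $\|qy-\gamma'\|$ is abnormally small — and conclude that the method only delivers the conjecture when at least one of $\gamma,\gamma'$ is non-Liouville. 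That is exactly the scope of Corollary \ref{Coro2}, so as a reconstruction of the paper's argument your proposal is faithful; as a proof of the full conjecture it is incomplete, but so is the paper's, and the missing case genuinely requires the Bohr-set/continued-fraction machinery of \cite{CT}.

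Two places where your sketch is lighter than what the paper actually needs. First, the Diophantine hypothesis is not merely a pointwise lower bound $\|qy-\gamma'\|\gg q^{-1-\varepsilon}$ for almost every $y$: the overlap estimate (Lemma \ref{Lemma}) runs a \emph{two-dimensional} discrepancy argument for the rotation by the pair $(\gamma,y)$, so one needs $(\gamma,y)$ to be a jointly Diophantine pair; the paper's doubly metric Lemma \ref{LMA} shows this holds for almost every $y$ precisely because $\gamma$ is non-Liouville, and this is the step that forces the non-Liouville hypothesis into Corollary \ref{Coro2}. Second, the role of monotonicity is more specific than "partial summation": it is used to show that the divergence $\sum_q\psi(q)\log q=\infty$ survives after restricting the support of $\psi'$ both to $\|qy-\gamma'\|\geq q^{-\omega}$ \emph{and} to integers $q$ with the divisor sum $F(q)=\sum_{r\mid q}\log r/r$ uniformly bounded; this second restriction is what tames the $\gcd$ terms in the overlap bound and is not visible in your sketch. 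Neither point changes your conclusion, but both would have to be supplied in an honest implementation.
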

	The proof in \cite{CT} relies on a fine analysis of the arithmetic and additive structures of Bohr sets. In this paper, we shall provide a different (and more elementary) proof for the following slightly weaker result.
	\begin{corollary}[Corollary of Theorem \ref{MAIN MONNTON}]\label{Coro2}
		Let $\psi$ be a non-increasing approximation function with $\sum_{q=1}^{\infty} \psi(q)\log q=\infty.$ Then for all non-Liouville number $\gamma$, real number $\gamma'$ and Lebesgue almost all $(x,y)\in [0,1]^2,$ there are infinitely many integers $q$ with
		\[
		\|qx-\gamma\|\|qy-\gamma'\|\leq  \psi(q).
		\]
	\end{corollary}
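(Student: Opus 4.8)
The plan is to deduce Corollary~\ref{Coro2} from the stronger Theorem~\ref{MAIN MONNTON} (the Erd\H{o}s--Vaaler type result for fibred multiplicative approximation referenced in the statement), so the real content is reducing Gallagher's two-dimensional divergence hypothesis $\sum_q \psi(q)\log q=\infty$ to a fibred-in-$y$ statement about $W(\psi,\beta,\gamma,\gamma')$ with $\gamma$ non-Liouville. First I would fix the non-Liouville number $\gamma$ and observe that, by Fubini, it suffices to show that for Lebesgue almost every $y\in[0,1]$ the fibre set $\{x\in[0,1]: \|qx-\gamma\|\,\|qy-\gamma'\|<\psi(q)\ \text{i.o.}\}=W(\psi,y,\gamma,\gamma')$ has full measure in $x$; integrating over $y$ then gives full planar measure. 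So the whole problem is to certify, for a.e.\ choice of the fibre parameter $\beta=y$, that $W(\psi,\beta,\gamma,\gamma')$ has full measure, which is exactly the shape of conclusion that Theorem~\ref{MAIN MONNTON} is built to deliver.

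The key steps, in order, are as follows. (i) Record that for a.e.\ $\beta$ the quantity $\|q\beta-\gamma'\|$ is not abnormally small: by the Borel--Cantelli lemma applied to $\sum_q \epsilon\, \psi(q)$-type tails, or more simply by the fact that for a.e.\ $\beta$ one has $\|q\beta-\gamma'\|\gg q^{-1-\delta}$ for every $\delta>0$ and all large $q$, so the ``effective'' approximation function $\psi'(q)=\psi(q)/\|q\beta-\gamma'\|$ is not too wild. (ii) Use the monotonicity of $\psi$ together with Gallagher's hypothesis $\sum_q\psi(q)\log q=\infty$ to produce, after a dyadic decomposition $q\in[2^n,2^{n+1})$, the divergence of the relevant series of measures $\sum_n \mu(E_n)$ where $E_n$ is the $n$-th level set in the fibre. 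Here the standard trick is that $\sum_{q}\psi(q)\log q \asymp \sum_n 2^n \psi(2^n)\, n$, and on a dyadic block the average of $1/\|q\beta-\gamma'\|$ over $q$ is comparable to $\log$ of the block length, which is $\asymp n$; this is the mechanism by which the logarithmic factor in Gallagher's theorem is exactly accounted for. (iii) Feed this divergence, for a.e.\ $\beta$, into Theorem~\ref{MAIN MONNTON} (whose hypotheses the paper has arranged to match $\psi(q)=O(q^{-1}(\log\log q)^{-2})$-type bounds and $\sum\psi(q)=\infty$ after the fibre normalisation), obtaining $|W(\psi,\beta,\gamma,\gamma')|=1$. (iv) Finish with Fubini as in the reduction. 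A small wrinkle is that Theorem~\ref{MAIN MONNTON} may carry an upper-bound restriction on the approximation function; if $\psi$ is large, i.e.\ $\psi(q)$ is not $O(q^{-1}(\log\log q)^{-2})$, one truncates to $\tilde\psi(q)=\min\{\psi(q),q^{-1}(\log\log q)^{-2}\}$ and checks that $\sum_q\tilde\psi(q)\log q=\infty$ still holds --- this is where the monotonicity of $\psi$ is used again, since a monotone function exceeding the threshold on a set of full logarithmic density would already force divergence from a thin tail, so truncation is harmless.

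The main obstacle I expect is step (ii): controlling, uniformly for almost every fibre $\beta$, the interaction between the arithmetic of $\|q\beta-\gamma'\|$ on a dyadic block and the genuinely two-dimensional divergence condition. One has to rule out a conspiracy in which $\|q\beta-\gamma'\|$ is simultaneously small precisely on the $q$'s where $\psi(q)$ is large, which would let the fibred series converge even though the planar series diverges. The way around this is the averaging heuristic above made rigorous: the measure of the set of $\beta$ for which $\sum_{q\sim 2^n} \psi(q)/\|q\beta-\gamma'\|$ falls below a constant multiple of its expected order $n\,2^n\psi(2^n)$ decays fast enough in $n$ (second-moment / Chebyshev estimate using near-orthogonality of the functions $\beta\mapsto\|q\beta-\gamma'\|^{-1}$ across a dyadic block), so Borel--Cantelli gives the block-wise lower bound for a.e.\ $\beta$, and summing the blocks recovers full fibred divergence. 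Combined with the non-Liouville hypothesis on $\gamma$ --- which is what Theorem~\ref{MAIN MONNTON} needs on the \emph{other} coordinate to control the homogeneous-dynamics side and which is \emph{not} available on $\gamma'$, hence the asymmetry in the statement --- this closes the argument.
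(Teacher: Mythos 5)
Your overall architecture (fix $\gamma$, prove the fibred statement for a.e.\ $\beta=y$, finish by Fubini) matches the paper, but the substance of your reduction does not, and there is a genuine gap: you never verify the actual hypothesis of Theorem~\ref{MAIN MONNTON}, which is that the \emph{pair} $(\gamma,\beta)$ is Diophantine, i.e.\ $\|k_1\gamma+k_2\beta\|\ge c\max\{|k_1|,|k_2|\}^{-\rho}$ for all integer pairs. Your step (i) controls a different quantity, $\|q\beta-\gamma'\|$, which is not what the theorem asks for; and nothing in your argument rules out that $\beta$ is badly correlated with $\gamma$ in the sense of simultaneous (dual) approximation. The paper supplies exactly this missing ingredient as a separate doubly metric statement (Lemma~\ref{LMA}): if $\gamma$ is irrational and non-Liouville, then for Lebesgue-a.e.\ $\beta$ the pair $(\gamma,\beta)$ is Diophantine. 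The proof is a convergence Borel--Cantelli argument over the sets $A_{q_1,q_2}=\{\beta:\|q_1\gamma+q_2\beta\|\le\max\{|q_1|,|q_2|\}^{-H'}\}$, each of measure $\le 2\max\{|q_1|,|q_2|\}^{-H'}$, with the non-Liouville hypothesis on $\gamma$ used only to handle the degenerate case $q_2=0$. This is where the asymmetry between $\gamma$ and $\gamma'$ in the corollary really comes from, not from a homogeneous-dynamics role of $\gamma$ inside the fibre theorem.

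Your steps (ii)--(iii) and the truncation ``wrinkle'' are also aimed at the wrong target. Theorem~\ref{MAIN MONNTON} is stated with hypotheses that are already exactly those of the corollary on the approximation function: $\psi$ non-increasing and $\sum_q\psi(q)\log q=\infty$, with no upper-bound restriction and no fibred divergence condition of the form $\sum_q\psi(q)/\|q\beta-\gamma'\|=\infty$. The conversion of $\sum_q\psi(q)\log q=\infty$ into the fibred divergence needed internally (and the comparison against the benchmark $1/(q\log q)$) is carried out \emph{inside} the proof of Theorem~\ref{MAIN MONNTON}, using monotonicity of $\psi$ and discrepancy estimates for the rotation by $\beta$ --- not by a second-moment/Chebyshev argument over $\beta$ on dyadic blocks as you propose. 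So once Lemma~\ref{LMA} is in place, the corollary follows immediately by citing the theorem for a.e.\ $\beta$ and applying Fubini; without it, your appeal to the theorem is unjustified.
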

	The extra non-Liouville condition is not necessary because of the results in \cite{CT}. However, under this condition, one may obtain a stronger result. To be more concrete and precise, we state the following conjecture.
	\begin{conjecture}\label{con2}
		Let $\psi$ be an approximation function with $\sum_{q=1}^{\infty} \psi(q)\log q=\infty$ and $\psi(q)=O(q^{-1}(\log q)^{-2}).$ Then for all non-Liouville number $\gamma$ and real number $\gamma'$ Lebesgue almost all $(x,y)\in [0,1]^2,$ there are infinitely many integers $q$ with
		\[
		\|qx-\gamma\|\|qy-\gamma'\|\leq  \psi(q).
		\]
	\end{conjecture}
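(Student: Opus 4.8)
\medskip

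\noindent\textbf{Towards Conjecture~\ref{con2}.} The approach I would take is a second--moment (divergence Borel--Cantelli) argument carried out directly on the planar $\limsup$ set, in the spirit of the elementary method of \cite{Yu2} behind Corollary~\ref{Coro2}, with the Erd\H{o}s--Vaaler upper bound $\psi(q)=O(q^{-1}(\log q)^{-2})$ playing the role that monotonicity plays there. Fibering is not viable: fibering over $y$ and feeding the one--dimensional Erd\H{o}s--Vaaler--type theorem of \cite{Yu2} into the $x$--variable degrades the weight $\psi(q)\log q$ to $\psi(q)\log\log q$ (the logarithmic integral of $\|qy-\gamma'\|^{-1}$ over the admissible $y$ shrinks from $\log q$ to $\log\log q$), which is weaker than the hypothesis; and fibering over $x$ would demand a one--dimensional statement for the possibly Liouville shift $\gamma'$, where only monotonicity--based results are available. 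Set $\cE_q=\{(x,y)\in[0,1]^2:\|qx-\gamma\|\,\|qy-\gamma'\|\le\psi(q)\}$; pushing forward Lebesgue measure by $x\mapsto qx-\gamma$, $y\mapsto qy-\gamma'$ and integrating over the hyperbolic region gives $|\cE_q|\asymp\psi(q)\log(1/\psi(q))\ge\psi(q)\log q$ (as $\psi(q)\le q^{-1}$), so $\sum_q\psi(q)\log q=\infty$ already forces $\sum_q|\cE_q|=\infty$ and the task reduces to the quasi--independence estimate $\sum_{q\ne q'\le Q}|\cE_q\cap\cE_{q'}|\ll(\sum_{q\le Q}|\cE_q|)^2$, together with its localisation to an arbitrary box (so that the Lebesgue density theorem upgrades positive measure to full measure).

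To decouple the coordinates, slice dyadically: for $1\le j\le J_q:=\lceil\log_2(1/\psi(q))\rceil$ put $\cA_q^{(j)}=\{x:2^{-j-1}<\|qx-\gamma\|\le 2^{-j}\}$ and $\cB_q^{(j)}=\{y:\|qy-\gamma'\|\le 2^{\,j+1}\psi(q)\}$, so that up to a bounded factor $\cE_q=\bigsqcup_j \cA_q^{(j)}\times\cB_q^{(j)}$ with $|\cA_q^{(j)}|\asymp 2^{-j}$, $|\cB_q^{(j)}|\asymp 2^{j}\psi(q)$, and hence
\[
|\cE_q\cap\cE_{q'}|\ \ll\ \sum_{j,j'}\big|\cA_q^{(j)}\cap\cA_{q'}^{(j')}\big|\cdot\big|\cB_q^{(j)}\cap\cB_{q'}^{(j')}\big| .
\]
Everything is then reduced to one--dimensional correlation bounds for the $\gamma$--family $\{\cA_q^{(j)}\}$ and the $\gamma'$--family $\{\cB_q^{(j)}\}$.

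For fixed $j$ the radius $2^{-j}$ is independent of $q$, so $\{\cA_q^{(j)}\}_q$ is an inhomogeneous problem with a constant (so trivially monotone) function and shift $\gamma$; the continued--fraction/three--distance analysis of \cite{Yu2} should then give $|\cA_q^{(j)}\cap\cA_{q'}^{(j')}|\ll|\cA_q^{(j)}|\,|\cA_{q'}^{(j')}|$ outside a sparse set of resonant differences $q-q'$ governed by the convergent denominators of $\gamma$, with the non--Liouville hypothesis on $\gamma$ making the resonant contribution summable. This is precisely where the bound $\psi(q)=O(q^{-1}(\log q)^{-2})$ enters: it keeps each dyadic scale $2^{-j}$ above $\psi(q)$ and the resonance counts under control with no monotonicity of $\psi$ itself, just as the $O(q^{-1}(\log\log q)^{-2})$ bound does in the one--dimensional Erd\H{o}s--Vaaler--type theorem; summing over $j\le J_q$ restores the weight $\log(1/\psi(q))$ on the diagonal.

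The main obstacle is the $\gamma'$--family, since $\gamma'$ may be Liouville. If $\gamma'$ has an abnormally good rational approximation $a/b$, then $\|q\gamma'\|$ is abnormally small along the progression $b\mid q$, so the sets $\cB_q^{(j)}$ with $b\mid q$ overlap far more than product independence predicts, and --- $\psi$ carrying no monotone structure --- this excess cannot be absorbed by summation by parts as in the Gallagher/Chow setting. The only apparent way out is to charge these overlaps to the $\cA$--side: $\gamma'$--resonant moduli $q,q'$ satisfy $b\mid q-q'$, hence $|q-q'|\ge b$ is large, and large differences should force $\cA_q^{(j)}$ and $\cA_{q'}^{(j')}$ to be nearly independent unless they are themselves $\gamma$--resonant, which the non--Liouville condition on $\gamma$ also forbids too often. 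Quantifying this exchange --- showing that \emph{every} $\gamma'$--resonance among the $\cB$'s is paid for by near--independence among the $\cA$'s, uniformly and with no monotone fallback for $\psi$ --- is the crux, and is the reason the statement is posed as a conjecture. A complete argument would presumably combine the additive and arithmetic structure theory of Bohr sets from \cite{CT} (to pin down the $\gamma'$--resonant progressions) with the elementary resonance counting of \cite{Yu2} (to turn them into savings against the non--Liouville shift $\gamma$).
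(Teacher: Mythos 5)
There is no proof to compare against: the statement you are addressing is posed in the paper as Conjecture~\ref{con2} and is explicitly left open there --- the paper says it remains open even under the stronger hypothesis $\psi(q)=O(q^{-1}(\log q)^{-2}(\log\log q)^{-2})$, and what is actually proved (Theorem~\ref{MAIN}) is only a \emph{fibred} version under a joint Diophantine condition on $(\gamma,\beta)$. Your proposal is, by your own admission, a strategy sketch rather than a proof: the step you call ``the crux'' --- showing that every $\gamma'$-resonance among the $\cB$-sets is compensated by near-independence among the $\cA$-sets, uniformly and without any monotone fallback for $\psi$ --- is exactly the missing ingredient, and nothing in your outline supplies it. So the correct verdict is that there is a genuine gap, one which you have named accurately and which coincides with the reason the statement is a conjecture in the first place.

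Two remarks on how your sketch relates to the paper's partial progress. First, your route is genuinely different in architecture: you propose a doubly metric second-moment argument directly on $[0,1]^2$, decoupling the hyperbolic region $\cE_q$ into dyadic products $\cA_q^{(j)}\times\cB_q^{(j)}$ and reducing to one-dimensional correlation estimates for the two families; the paper instead fixes the second coordinate $y=\beta$, absorbs $\|q\beta-\gamma'\|$ into a modified approximation function $\psi'$, and runs a one-dimensional Borel--Cantelli argument using the joint discrepancy of the rotation by $(\gamma,\beta)$ (Lemmas~\ref{master} and~\ref{Lemma}). Second, your diagnosis of why fibering fails is slightly off target. The paper's fibred theorem is stronger than a naive application of the one-dimensional Erd\H{o}s--Vaaler-type result would give: by exploiting the Diophantine pair $(\gamma,\beta)$ it handles $\|q\beta-\gamma'\|$ all the way down to $q^{-\omega}$, so the retained logarithmic weight is $\omega\log q$, not the $\log\log q$ you compute. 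The actual obstruction to deducing the unfibred statement is that for non-monotonic $\psi$ the divergence condition (\ref{Div}), namely $\sum_{q:\,\|q\beta-\gamma'\|\in[q^{-\omega},1)}\psi(q)/\|q\beta-\gamma'\|=\infty$, is not guaranteed to hold for almost every $\beta$ merely because $\sum_q\psi(q)\log q=\infty$: the expectation over $\beta$ diverges, but the mass may concentrate on a null set of $\beta$. Any completion of either route has to confront that concentration phenomenon, which is the same difficulty your $\gamma'$-resonance discussion gestures at from the planar side.
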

	We will prove (in Theorem \ref{MAIN}) a fibred version of Conjecture \ref{con2} under a slightly stronger requirement on $\psi,$ i.e. $\psi(q)=O(q^{-1}(\log q)^{-2}(\log\log q)^{-2}).$ Unlike the situation with monotonic $\psi,$ the unfibred statement is no longer a direct consequence of the fibred version and Conjecture \ref{con2} is still open even with this stronger requirement on $\psi.$
	
	It is interesting to continue this direction and try to obtain a sharp result for multiplicative Diophantine approximations. For example, what would be the Duffin-Schaffer conjecture in the multiplicative setting? 
	\subsection{Results in this paper}
	Before we state the main results, we mention that for monotonic approximation functions, the optimal results are proved in \cite{CT}. Our results are weaker in the sense that we need to require stronger Diophantine conditions for some parameters. For example, Corollary \ref{Coro2} requires one of the shift parameters $\gamma,\gamma'$ to be not Liouville. Our method also extends to deal with non-monotonic approximation functions. In this situation, we believe that some of the stronger Diophantine conditions are in fact necessary.  
	\subsubsection{Diophantine parameters}
	Our method works very well when some Diophantine conditions are presented. Before we state the results, some standard terminologies are needed. 
	\begin{defn}
		We say that a real number $\gamma$ is Diophantine if there are positive numbers $\rho,c$  such that $\|n\gamma\|\geq c n^{-\rho}$ for all $n\geq 1.$ The infimum of all such possible $\rho$ (we allow $c$ to change) is referred to as the Diophantine exponent of $\gamma.$ More generally, let $\gamma_1,\dots,\gamma_r$ be $r\geq 2$ real numbers which are $\mathbb{Q}$-linearly independent. We say that $(\gamma_1,\dots,\gamma_r)$ is a Diophantine $r$-tuple if there are positive numbers $\rho,c$ such that
		\[
		\|n_1\gamma_1+\dots+n_r\gamma_r\|\geq c(\max\{|n_1|,\dots,|n_r|\})^{-\rho}
		\]
		for all $n_1,\dots,n_r\in \mathbb{Z}.$ The infimum of all such possible $\rho$ is referred to as the Diophantine exponent\footnote{In some literatures, this is called the dual exponent.} of $(\gamma_1,\dots,\gamma_r).$
	\end{defn}
	In general, the Diophantine exponent of $r$ $\mathbb{Q}$-linearly independent numbers is at least $r.$ It is possible to see that if $(\gamma_1,\dots,\gamma_r)$ is Diophantine then $(M_1\gamma_1,\dots,M_r\gamma_r)$ is Diophantine for all non-zero integers $M_1,\dots,M_r.$ Further more, their Diophantine exponents are equal although the associated constants might be different. Examples of Diophantine tuples include tuples of $\mathbb{Q}$-linearly independent algebraic numbers, for example, $(\sqrt{2},\sqrt{3}).$ Other examples includes tuples of natural logarithms of algebraic numbers, for example, $(\ln 2,\ln 3).$
	
	We first provide the following Erd\H{o}s-Vaaler type result.
	\begin{theorem}\label{MAIN}[main Theorem I]
		Let $\psi(q)=O((q\log q(\log\log q)^2)^{-1})$ be an approximation function. Let $\gamma,\beta$ be irrational numbers such that $(\gamma,\beta)$ is Diophantine. Let $\gamma'$ be a real number.  Then for all small enough $\omega>0$ (in a manner that depends on $(\gamma,\beta),\gamma$ and $\beta$), if the following condition holds,
		\begin{align}
		\sum_{\substack{q\in \mathbb{N}\\\|q\beta-\gamma'\|\in [q^{-\omega},1)}}\frac{\psi(q)}{\|q\beta-\gamma'\|}=\infty,\label{Div}
		\end{align}
		then $|W(\psi,\beta,\gamma,\gamma')|=1.$
	\end{theorem}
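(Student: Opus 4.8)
The plan is to reduce the multiplicative problem to an inhomogeneous one-dimensional problem with a non-monotone approximation function, and then to run the elementary "quasi-independence on average" machinery of \cite{Yu2} after a careful decomposition of the $q$-range according to the size of $\|q\beta-\gamma'\|$. Recall that $W(\psi,\beta,\gamma,\gamma')=W(\psi',\gamma)$ where $\psi'(q)=\psi(q)/\|q\beta-\gamma'\|$; the divergence hypothesis \eqref{Div} says exactly that the sub-sum of $\sum_q\psi'(q)$ restricted to the ``good'' set $G=\{q:\|q\beta-\gamma'\|\in[q^{-\omega},1)\}$ diverges. So it suffices to show the set of $x$ with $\|qx-\gamma\|<\psi'(q)$ for infinitely many $q\in G$ has full measure. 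The restriction to $G$ is what makes the Erd\H{o}s--Vaaler upper bound $\psi(q)=O((q\log q(\log\log q)^2)^{-1})$ usable: on $G$ we have $\psi'(q)=O((q^{1-\omega}\log q(\log\log q)^2)^{-1})$, which for $\omega$ small keeps $\psi'$ of size essentially $q^{-1+o(1)}$, i.e.\ still within the regime where the counting/overlap estimates of \cite{Yu2} apply.

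The main steps, in order: (i) Fix a small $\omega>0$ to be determined by the Diophantine exponent of $(\gamma,\beta)$. Partition $G$ dyadically, $G=\bigcup_k G_k$ with $G_k=\{q\in G: 2^k\le q<2^{k+1}\}$, and further split each $G_k$ according to the dyadic size of $\|q\beta-\gamma'\|\in[2^{-j},2^{-j+1})$ for $0\le j\lesssim \omega k$. (ii) For each such block the effective approximation function is essentially constant, $\psi'(q)\asymp 2^j\psi(q)$, so within a block we are approximating by $\|qx-\gamma\|<c\,2^{j}\psi(q)$ for $q$ in an interval, a homogeneous-length condition to which the standard second Borel--Cantelli / quasi-independence estimate applies. (iii) The key input is a pair-correlation (quasi-independence on average) bound: for $q,q'$ in the relevant range, the measure of $x$ satisfying both $\|qx-\gamma\|<\psi'(q)$ and $\|q'x-\gamma\|<\psi'(q')$ is, on average over the block, bounded by the product of the individual measures plus a lower-order error. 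Here the Diophantine hypothesis on $(\gamma,\beta)$ enters twice: it controls how often $q\in G$ (so that the divergence is not destroyed by removing the ``bad'' $q$ with $\|q\beta-\gamma'\|<q^{-\omega}$), and it controls resonance overlaps $\|(q-q')\gamma\|$ appearing in the inhomogeneous pair count. (iv) Assemble: divergence of $\sum_{q\in G}\psi'(q)$ over the blocks, together with the quasi-independence bound, yields via the Chung--Erd\H{o}s inequality a positive-measure lower bound for the limsup set on each scale, and then a zero--one law (or a density/Lebesgue-density argument as in \cite{Yu2}) upgrades positive measure to full measure.

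The hard part, I expect, is step (iii): obtaining the pair-correlation estimate with an error term that genuinely sums over the block. The obstruction is that $\psi'$ is wildly non-monotone --- $\|q\beta-\gamma'\|$ can be as small as $q^{-\omega}$ --- so one cannot appeal to monotonicity to tame the overlap sums as in Gallagher-type arguments; instead one must exploit that on $G$ the denominators $\|q\beta-\gamma'\|$ themselves equidistribute well (a consequence of $(\gamma,\beta)$, hence $\beta$, being Diophantine), so that the ``heavy'' $q$ (small denominator, large $\psi'(q)$) are sufficiently sparse and sufficiently spread out that their mutual overlaps and their overlaps with light $q$ remain controlled. Quantifying this --- essentially a simultaneous counting problem for $q$ lying in a short interval with $\|q\beta-\gamma'\|$ small \emph{and} $\|(q-q')\gamma\|$ small --- is where the choice of $\omega$ (small relative to $1/\rho$, $\rho$ the Diophantine exponent of $(\gamma,\beta)$) gets pinned down, and is the technical core of the argument. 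The remaining steps are the routine Borel--Cantelli/Chung--Erd\H{o}s bookkeeping and the standard full-measure upgrade, which I would carry out exactly as in \cite{Yu2}.
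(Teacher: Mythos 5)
Your overall strategy is the one the paper uses: reduce to $W(\psi',\gamma)$ with $\psi'(q)=\psi(q)/\|q\beta-\gamma'\|$ supported on $G=\{q:\|q\beta-\gamma'\|\in[q^{-\omega},1)\}$, decompose dyadically in $q$ and in the size of $\|q\beta-\gamma'\|$, prove a pair-correlation bound driven by the two rotations $\gamma$ and $\beta$ jointly (this is exactly the count $S_{k,l,r}(q)$ in the proof of Lemma \ref{Lemma}, handled via the discrepancy of the rotation by $(\gamma,r\beta)$), and finish with the divergence Borel--Cantelli inequality. One cosmetic difference: the paper does not need a zero--one law to pass from positive to full measure; it gets measure $\geq(1+C_0/(2H))^{-1}$ for every $H$ from Lemma \ref{master} and lets $H\to\infty$.

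There is, however, a genuine gap: you never confront the divisor structure of the overlaps, which is the actual reason the Erd\H{o}s--Vaaler upper bound $\psi(q)=O((q\log q(\log\log q)^2)^{-1})$ appears and is the technical core alongside your step (iii). The overlap bound of Lemma \ref{master} is weighted by $\gcd(q,q')$, so summing over $q'<q$ produces, after all the equidistribution estimates, a main term of size $\psi'(q)(\log\log q)^{-2}F(q)$ with $F(q)=\sum_{r\mid q}\log r/r$. For highly composite $q$ this is \emph{not} $O(\psi'(q))$, so the pair-correlation error does not ``genuinely sum over the block'' as you hope; quasi-independence on average fails for such $q$ no matter how well $\|q\beta-\gamma'\|$ equidistributes. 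The missing idea is Step 1 of the paper's proof: one must first show that the sub-sum of $\psi'(q)$ over $q$ with $F(q)\gg(\log\log q)^2$ converges, so that $\psi'$ may be restricted to $q$ with $F(q)=O((\log\log q)^2)$ without destroying the divergence \eqref{Div}. This is done by a moment computation: bounding $\sum_{q\in G^l\cap[Q/2,Q]}F^{K}(q)$ for $K\asymp\log\log Q$ by expanding into lcm's $[r_1,\dots,r_K]$ and counting their multiples in $G^l\cap[Q/2,Q]$ via the discrepancy of the rotation by $[r_1,\dots,r_K]\beta$, then applying Chebyshev. Only after this restriction does the $(\log\log q)^{-2}$ in the hypothesis on $\psi$ cancel the $F(q)$ factor and reduce the overlap sum to $O(\psi'(q))+4(1+C_0/(2H))\psi'(q)\sum_{q'<q}\psi'(q')$. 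You attribute all the difficulty of step (iii) to the non-monotonicity coming from $\|q\beta-\gamma'\|$; the arithmetic obstruction from common factors of $q$ and $q'$ is equally essential and is what your outline omits.
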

	\begin{remark}
		The power $2$ in $q\log q (\log\log q)^2$ is by no means the optimal value with our method. It is very likely to be improved to $1$ or even $0.$  The value $2$ comes from a rather crude bound (\ref{II'1}) in the proof of this theorem. See Section \ref{proofofmain} for more details.
	\end{remark}
	\begin{remark}\label{Conv}
		The divergence condition (\ref{Div}) in the above results looks rather technical. A way of viewing it is that after fixing $\gamma,\beta$ and $\gamma'$ we can choose a small enough $\omega>0$ and consider the set of integers $q$ with $\|q\beta-\gamma'\|\in [q^{-\omega},1].$ Restricted to this set of integers, we can design approximation functions $\psi$ freely subject to a upper bound condition and a divergence condition (\ref{Div}). In fact, under the condition that $\psi$ is supported on where $\|q\beta-\gamma'\|\in [q^{-\omega},1]$ the condition (\ref{Div}) is also necessary.
	\end{remark}
	If $\psi$ is monotonic, then we can prove the following fibred Chow-Technau's theorem with Diophantine parameters.
	\begin{theorem}\label{MAIN MONNTON}[Chow-Technau's theorem for Diophantine parameters]
		Let $\psi(q)$ be a non-increasing approximation function. Let $\gamma,\beta$ be irrational numbers such that $(\gamma,\beta)$ is Diophantine. Let $\gamma'$ be a real number. Suppose that $\sum_{q=1}^{\infty} \psi(q)\log q=\infty,$ then
		\[
		|W(\psi,\beta,\gamma,\gamma')|=1.
		\]
	\end{theorem}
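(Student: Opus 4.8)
The plan is to deduce the monotonic statement from the Erdős–Vaaler type result, Theorem \ref{MAIN}, by a dyadic decomposition of $\N$ according to the size of $\|q\beta-\gamma'\|$. Fix $\gamma,\beta$ with $(\gamma,\beta)$ Diophantine and fix $\gamma'\in\R$. The first step is to reduce to the case where $\psi(q)=O\bigl((q\log q(\log\log q)^2)^{-1}\bigr)$: if $\psi$ does not already satisfy this upper bound, replace it by $\tilde\psi(q)=\min\{\psi(q),(q\log q(\log\log q)^2)^{-1}\}$. Since $\psi$ is non-increasing one checks, by the standard Cauchy condensation / dyadic-block argument, that $\sum_q\tilde\psi(q)\log q$ still diverges (the truncated part, where $\psi(q)>(q\log q(\log\log q)^2)^{-1}$, contributes $\sum_q (q(\log\log q)^2)^{-1}$ over any dyadic block, which sums to $\sum_k k^{-1}(\log k)^{-2}<\infty$ when aggregated by blocks $q\in[2^k,2^{k+1})$ — wait, one has to be mildly careful, but monotonicity makes the truncated set an initial-type segment within each scale and the bookkeeping goes through). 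Since $W(\tilde\psi,\beta,\gamma,\gamma')\subseteq W(\psi,\beta,\gamma,\gamma')$, it suffices to prove the theorem for $\tilde\psi$, so from now on assume the upper bound.

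The second and main step is to choose $\omega>0$ small enough for Theorem \ref{MAIN} (depending on $(\gamma,\beta),\gamma,\beta$) and to verify the divergence condition \eqref{Div}, namely
\[
\sum_{\substack{q\in\N\\ \|q\beta-\gamma'\|\in[q^{-\omega},1)}}\frac{\psi(q)}{\|q\beta-\gamma'\|}=\infty .
\]
Here I would argue as follows. On the one hand, $\sum_q\psi(q)\log q=\infty$; on the other, I claim the contribution of the "bad" integers, those with $\|q\beta-\gamma'\|<q^{-\omega}$, is controlled. For each dyadic block $q\in[Q,2Q)$ the number of $q$ with $\|q\beta-\gamma'\|<\delta$ is $O(\delta Q + R)$ where $R$ accounts for the inhomogeneous three-distance/gap structure of $\{q\beta\}$; since $\beta$ is irrational this count is $\lesssim \delta Q+1$, and the full sum $\sum_{q\in[Q,2Q),\,\|q\beta-\gamma'\|<Q^{-\omega}}\psi(q)$ is, using $\psi(q)\le\psi(Q)\lesssim (Q\log Q(\log\log Q)^2)^{-1}$, at most $\psi(Q)\cdot O(Q^{1-\omega})=O\bigl(Q^{-\omega}(\log Q)^{-1}(\log\log Q)^{-2}\bigr)$, which summed over dyadic blocks $Q=2^k$ converges. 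Meanwhile one also removes the (finitely many per block, geometrically sparse) $q$ with $\|q\beta-\gamma'\|\ge 1$ — actually $\|q\beta-\gamma'\|<1/2$ always, so that part is vacuous. Hence the sum over the "good" range differs from $\sum_q \psi(q)/\|q\beta-\gamma'\| \ge \sum_q\psi(q)\cdot\tfrac{1}{?}$ by a convergent amount; more precisely, since $1/\|q\beta-\gamma'\|\ge 1$ always and $\sum_q\psi(q)\log q=\infty$ forces $\sum_q\psi(q)/\|q\beta-\gamma'\|=\infty$ once we exhibit enough $q$ with $\|q\beta-\gamma'\|$ not too small, the divergence of \eqref{Div} follows. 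The cleanest route: $\sum_q \psi(q)/\|q\beta-\gamma'\| \ge \sum_{q:\,\|q\beta-\gamma'\|\ge q^{-\omega}} \psi(q)$, and by the block estimate above $\sum_{q:\,\|q\beta-\gamma'\|<q^{-\omega}}\psi(q)\log q<\infty$ while in a typical block a positive proportion of $q$ satisfy $\|q\beta-\gamma'\|\gtrsim 1$ (equidistribution of $\{q\beta\}$), so $\sum_{q:\,\|q\beta-\gamma'\|\ge q^{-\omega}}\psi(q)\cdot\log q \gtrsim \sum_q\psi(q)\log q - O(1)=\infty$; removing the $\log q$ only makes the left side smaller, yet it is still $\ge$ a divergent series because within each block the values $\psi(q)$ are comparable and the surviving sum over all blocks of $\psi(2^k)\cdot 2^k$ diverges once weighted appropriately. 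This comparison is exactly where monotonicity of $\psi$ is used.

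Having verified \eqref{Div}, Theorem \ref{MAIN} gives $|W(\psi,\beta,\gamma,\gamma')|=1$, completing the proof. The main obstacle I anticipate is the bookkeeping in the second step: one must show that truncating $\psi$ to the upper bound $O((q\log q(\log\log q)^2)^{-1})$ preserves the divergence of $\sum_q\psi(q)\log q$, and simultaneously that throwing away the $q$ with $\|q\beta-\gamma'\|<q^{-\omega}$ costs only a convergent sum — both facts hinge on combining monotonicity of $\psi$ with equidistribution bounds for $\{q\beta-\gamma'\}$ along dyadic scales, and the two truncations interact, so the estimates must be organised carefully (e.g. block by block, choosing $\omega$ after fixing the implied constants from the Diophantine condition on $(\gamma,\beta)$). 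Everything else is a direct appeal to the already-established Theorem \ref{MAIN}.
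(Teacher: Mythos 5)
Your reduction to Theorem \ref{MAIN} breaks down at the very first step: the truncation $\tilde\psi(q)=\min\{\psi(q),(q\log q(\log\log q)^2)^{-1}\}$ does \emph{not} in general preserve the divergence of $\sum_q\psi(q)\log q$, and the point you flagged as ``mildly careful'' is exactly where it fails. Take $Q_i=2^{2^{2^i}}$ and set $\psi(q)=1/(Q_i\log Q_i)$ for $q\in(Q_{i-1},Q_i]$; this is non-increasing and each block contributes $\asymp 1$ to $\sum_q\psi(q)\log q$, so that series diverges. But inside block $i$ the truncation is active (i.e.\ $\tilde\psi(q)=(q\log q(\log\log q)^2)^{-1}$) precisely for $q\gtrsim Q_i/(\log\log Q_i)^2$, and a short computation shows block $i$ contributes only $O\bigl(\log\log\log Q_i/(\log\log Q_i)^2\bigr)=O(i/4^i)$ to $\sum_q\tilde\psi(q)\log q$, which converges. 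The same computation shows that condition (\ref{Div}) genuinely fails for $\tilde\psi$ (since $\sum_{q\sim Q}1/\|q\beta-\gamma'\|\asymp Q\log Q$ on the admissible range, (\ref{Div}) is essentially $\sum\tilde\psi(q)\log q=\infty$), so no choice of $\omega$ rescues the reduction. This is precisely why the paper does not deduce Theorem \ref{MAIN MONNTON} from Theorem \ref{MAIN}: it splits into a finite case, where $\psi(q)\le(q\log q)^{-1}$ eventually and one applies Remark \ref{Remark} (the variant of Lemma \ref{Lemma} needing only $\psi(q)=O((q\log q)^{-1})$) after restricting $\psi'$ to integers with $F(q)=\sum_{r|q}\log r/r$ bounded by a constant $H'$, and an infinite case, where one passes to the explicit minorant $\psi_*$ equal to $1/(2Q_i\log Q_i)$ on blocks $[Q_i/2,Q_i]$ on which $\psi(q)\ge 1/(2q\log q)$. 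The counterexample above lands squarely in that infinite case.

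Second, even where your strategy could apply, your verification of (\ref{Div}) is not a proof. You correctly control the discarded sum over $\|q\beta-\gamma'\|<q^{-\omega}$, but you then argue that the remaining sum diverges because $1/\|q\beta-\gamma'\|\ge 1$ and a positive proportion of $q$ have $\|q\beta-\gamma'\|\gtrsim 1$; this only yields $\sum\psi'(q)\gtrsim\sum\psi(q)$, and $\sum_q\psi(q)$ need not diverge --- only $\sum_q\psi(q)\log q$ is assumed to. The factor $1/\|q\beta-\gamma'\|$ must be made to \emph{produce} the lost $\log q$: one needs the lower bound $\sum_{q\in[Q/2,Q],\,\|q\beta-\gamma'\|\ge Q^{-\omega}}1/\|q\beta-\gamma'\|\gg Q\log Q$, obtained by slicing into the ranges $\|q\beta-\gamma'\|\in[2^lQ^{-\omega},2^{l+1}Q^{-\omega}]$ for $0\le l\le\omega\log Q$ and applying the discrepancy estimate to each slice (this is the computation culminating in (\ref{E5'}), which moreover must be carried out with the constraint $F(q)\le H'$ attached). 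Your proposal names the right ingredients, but neither of its two load-bearing steps is correct as stated.
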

	\begin{remark}
		This is a weaker version of a result recently proved by Chow and Technau. In fact, Chow-Technau's theorem(\cite[Corollary 1.10]{CT} with $k=2$) says that the conclusion holds by only assuming  that $\beta,\gamma$ are irrational and $\beta$ is non-Liouville. Theorem \ref{MAIN MONNTON} holds, for example, when $(\gamma,\beta)=(\sqrt{2},\sqrt{3})$ or $(\sqrt{3},\log 2).$ However, Chow-Technau's result can be used, for example, when $(\gamma,\beta)=(\pi,e)$ in which case one cannot use Theorem \ref{MAIN MONNTON} directly as it is not known whether $(\pi,e)$ is Diophantine or not.
	\end{remark}
	\subsubsection{General parameters}
	Our method can be also used to consider the case when $(\gamma,\beta)$ is not assumed to be Diophantine.  Let $\sigma(.)$ be a function taking integer variables and positive values so that $\sigma(N)$ is the best Diophantine exponent for $(\gamma,\beta)$ up to height $N.$ That is to say, $\sigma(N)$ is the infimum of all numbers $\sigma>0$ such that for all $-N\leq k_1,k_2\leq N$ with $k_1,k_2$ not both zeros,
	\[
	\|k_1\gamma+k_2\beta\|\geq \max\{k_1,k_2\}^{-\sigma}
	\] 
	In particular, if $(\gamma,\beta)$ is Diophantine, then $\sigma(.)$ is a bounded function. In general, $\sigma(.)$ is a non-decreasing function. When there are possible confusions, we write $\sigma_{(\gamma,\beta)}$ to indicate the $\sigma$ function associated with the pair $(\gamma,\beta).$  Similarly, one can define the $\sigma$ function for real numbers. For example $\sigma_\gamma(.)$ would be just be defined as $\sigma_{(\gamma,\beta)}$ but with $k_2=0$ throughout. Thus if $\sigma_\gamma$ is unbounded then $\gamma$ is Liouville. We do not exclude rational numbers.  If $\gamma$ is rational, then it is not Diophantine in this sense. In this case, $\sigma_\gamma$ is not a well defined function as it attains $\infty.$

	\begin{theorem}\label{MAIN2}[main Theorem II]
		Let $\psi(q)=O((q\log q (\log\log q))^{-3})$ be an approximation function. Let $\gamma,\beta$ be irrational numbers such that $\sigma_{(\gamma,\beta)}(q)=O ((\log\log\log q)^{1/2}).$ Let $\epsilon>0.$ Then there is a small number $c>0$ such that the following holds.
		
		Let $\omega:\mathbb{N}\to \mathbb{R}$ be the function
		\[
		\omega(q)=
		\begin{cases}
		1 & \log\log q\leq 1\\
		c/(\log\log\log q)^{1/2} & \log\log q>1
		\end{cases}.
		\]
		Let $\gamma'$ be a real number. If
		\begin{align*}
		\sum_{\substack{q\in\mathbb{N}\\\|q\beta-\gamma'\|\in [q^{-\omega(q)},1)}}\frac{\psi(q)}{\|q\beta-\gamma'\|}=\infty,
		\end{align*}
		then $|W(\psi,\beta,\gamma,\gamma')|=1.$
	\end{theorem}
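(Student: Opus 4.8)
The plan is to run essentially the same elementary machinery behind Theorems \ref{MAIN} and \ref{MAIN MONNTON}, but to track carefully how the allowable ``Bohr-type window'' $\|q\beta-\gamma'\|\in[q^{-\omega(q)},1)$ must shrink when the Diophantine exponent of $(\gamma,\beta)$ is allowed to grow (slowly) rather than being bounded. First I would fix $\epsilon>0$ and reduce, exactly as in the proof of Theorem \ref{MAIN} (see Section \ref{proofofmain}), to a second Borel--Cantelli / quasi-independence-on-average statement: writing $E_q=\{x\in[0,1]:\|qx-\gamma\|<\psi(q)/\|q\beta-\gamma'\|\}$ for those $q$ in the divergence set, it suffices to show $\sum_{q\le Q}|E_q|\to\infty$ (which is the hypothesis) together with the pair correlation bound $\sum_{q,q'\le Q}|E_q\cap E_{q'}|\ll \big(\sum_{q\le Q}|E_q|\big)^2$ up to a multiplicative constant and lower-order corrections. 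The set $W(\psi,\beta,\gamma,\gamma')$ is a $\limsup$ set, so once quasi-independence on average is in hand, a Cassels-type zero-one law (or directly the Erd\H os--Vaaler argument as invoked in \cite{Yu2}) upgrades positive measure to full measure.

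The heart of the matter, then, is estimating $|E_q\cap E_{q'}|$. Following the method of \cite{Yu2} and of Theorem \ref{MAIN}, one expands each $E_q$ into its $q$ arcs of radius $r_q:=\psi(q)/\|q\beta-\gamma'\|$ centred at the points $(\gamma+j)/q$, $0\le j<q$, and counts overlaps. An overlap between an arc of $E_q$ and an arc of $E_{q'}$ forces a solution of $\|(q-q')\gamma + \text{(something)}\|$ small, or more precisely constrains $q-q'$ (and an auxiliary integer) to lie in a Bohr set governed by $\gamma$; this is where the Diophantine hypothesis enters. With a bounded exponent one gets a clean geometric-series bound; here, with $\sigma_{(\gamma,\beta)}(q)=O((\log\log\log q)^{1/2})$, the number of ``bad'' differences up to height $Q$ is controlled by $Q^{\,O(\omega(q))}$-type factors, and the choice $\omega(q)=c/(\log\log\log q)^{1/2}$ is calibrated precisely so that $\sigma_{(\gamma,\beta)}(q)\cdot\omega(q)=O(1)$, keeping these factors bounded (for a suitably small absolute constant $c$, which is the $c$ in the statement). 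The restriction $\|q\beta-\gamma'\|\ge q^{-\omega(q)}$ simultaneously prevents $r_q$ from being so large that $E_q$ is the whole interval (so the counting is meaningful) and bounds the ``diagonal'' contribution $\sum_q |E_q|$ of a single $q$ by $q\cdot r_q = \psi(q) q/\|q\beta-\gamma'\| \le \psi(q) q^{1+\omega(q)}$, which is $o(1)$ thanks to the strong decay hypothesis $\psi(q)=O((q\log q(\log\log q))^{-3})$ — the extra powers of $\log$ and the cube (versus the square in Theorem \ref{MAIN}) absorb the now non-constant exponent $\omega(q)$ and the slowly growing $\sigma$.

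The main obstacle I expect is the bookkeeping in the pair-correlation sum when $\sigma$ is genuinely unbounded: one must split the range of $q,q'$ dyadically (or into blocks on which $\omega$ and $\sigma$ are essentially constant), apply the overlap count on each block with the block-appropriate exponent, and then re-sum, checking that the accumulated error across $\log\log\log Q$-many scales does not overwhelm $\big(\sum|E_q|\big)^2$. Handling the ``aligned'' arcs (those with $q\mid q'$ or small $\gcd$), where the two families of arcs can coincide rather than cross transversally, requires the usual separate treatment — there the contribution is bounded using the upper bound on $\psi$ directly rather than via Bohr-set counting. Provided these error terms are each $o\big((\sum_{q\le Q}|E_q|)^2\big)$, which the hypotheses on $\psi$ and $\sigma_{(\gamma,\beta)}$ are designed to guarantee, quasi-independence on average follows and hence $|W(\psi,\beta,\gamma,\gamma')|=1$. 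I would organise the write-up to mirror Section \ref{proofofmain}, isolating the overlap estimate as a lemma stated with an explicit dependence on $\sigma_{(\gamma,\beta)}(Q)$ and $\omega$, so that Theorem \ref{MAIN} becomes the special case $\sigma=O(1)$, $\omega=\text{const}$.
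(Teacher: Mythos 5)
Your overall architecture --- divergence Borel--Cantelli (Lemma \ref{Borel}) via a pair-correlation estimate, with the overlap count for $|A_q\cap A_{q'}|$ driven by discrepancy/Bohr-set input for the rotation by $(\gamma,\beta)$, and the calibration $\omega(q)\cdot\sigma_{(\gamma,\beta)}(q)=O(1)$ --- is indeed the paper's strategy, and your identification of the block decomposition (ranges on which $\omega$ and $\sigma$ are essentially constant) as the main bookkeeping burden is accurate: this is precisely Lemma \ref{Lemma3}, where the exponents $\epsilon,\epsilon',\alpha$ and $\omega$ all become functions of $q$ subject to $C_1/\log\log q\le\omega(q)\le C_2/\sigma(q)$. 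However, there is a genuine gap: you have not accounted for the divisor-sum obstruction. The overlap bound one actually obtains (Lemma \ref{Lemma3}) carries an error term of the shape $\psi'(q)\,\omega(q)F(q)/(\log\log q)^{3}$ with $F(q)=\sum_{r\mid q}\log r/r$, and $F(q)$ is unbounded (of order $\log q$ for highly composite $q$). Your ``separate treatment of aligned arcs'' addresses individual pairs $(q,q')$ with large $\gcd$, but the real difficulty is the accumulation over \emph{all} divisors $r\mid q$ in the sum over $q'<q$, which cannot be absorbed into $O(\psi'(q))$ uniformly in $q$. The paper therefore needs an indispensable preliminary step: a $K$-th moment estimate of $F$ over $G^l_\omega\cap[Q/2,Q]$ (estimate (\ref{RAW})), with $K=K_Q\asymp\log\log Q/(\log\log\log Q)^{1/2}$ and threshold $H_Q=o((\log\log Q)^{3})$, showing that the integers $q$ with $F(q)\gg(\log\log q)^{3}$ contribute a convergent subsum of $\sum_q\psi'(q)$ and may be discarded; the divergence hypothesis then survives on the remaining support, where Lemma \ref{Lemma3} gives genuine quasi-independence. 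This is exactly where the strengthened upper bound on $\psi$ (the cube of $\log\log q$) is consumed, and it is the step your proposal is missing; without it the pair-correlation sum is not $\ll(\sum_{q\le Q}|A_q|)^2$.

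A secondary point: you invoke a ``Cassels-type zero-one law'' to upgrade positive measure to full measure. No such law is used (nor is one readily available off the shelf in this inhomogeneous multiplicative setting); the paper instead obtains measure at least $1/(1+C_0/(2H))$ for every integer $H>2$ directly from Lemma \ref{master} combined with Lemma \ref{Borel}, and concludes by letting $H\to\infty$.
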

	Again, if $\psi$ is monotonic, then it is possible to prove  a result with cleaner conditions.
	\begin{theorem}\label{MAIN MONO2}
		Suppose that $\psi$ is a monotonic approximation function such that
		\[
		\sum_{q=1}^{\infty} \psi(q)\frac{\log q}{(\log\log q)^{1/2}}=\infty.
		\]
		Let $\gamma,\beta$ be irrational numbers such that $\sigma_{(\gamma,\beta)}(q)=O (\log\log q)^{1/2}).$ Let $\gamma'$ be a real number. Then $|W(\psi,\beta,\gamma,\gamma')|=1.$
	\end{theorem}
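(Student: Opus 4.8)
The reduction $W(\psi,\beta,\gamma,\gamma')=W(\psi',\gamma)$ with $\psi'(q)=\psi(q)/\|q\beta-\gamma'\|$ turns this into an inhomogeneous approximation problem with shift $\gamma$; the difficulties are that $\psi'$ is not monotone, so Sz\"usz's theorem does not apply, and that $\psi'$ is large where $\|q\beta-\gamma'\|$ is small. The plan is to run, for monotone $\psi$, the elementary (Erd\H{o}s--Vaaler type) machinery of \cite{Yu2} that underlies Theorem \ref{MAIN} and Theorem \ref{MAIN2} directly — as in the proof of Theorem \ref{MAIN MONNTON} — but now tracking the growth of $\sigma_{(\gamma,\beta)}$; the role of monotonicity is that it removes the need for any strong upper bound on $\psi$, exactly as in Khintchine's theorem, while the factor $(\log\log q)^{-1/2}$ in the divergence hypothesis is what pays for the permitted growth $\sigma_{(\gamma,\beta)}(q)=O((\log\log q)^{1/2})$.

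First I would set $\omega(q)=c(\log\log q)^{-1/2}$ for a small $c>0$ and split $\mathbb{N}$ according to the dyadic size of $\|q\beta-\gamma'\|$: for $0\le j\ll\omega(q)\log q$ let $S_j=\{q:2^{-j-1}\le\|q\beta-\gamma'\|<2^{-j}\}$, and let $T=\{q:\|q\beta-\gamma'\|<q^{-\omega(q)}\}$ be the discarded tail. The hypothesis $\sigma_{(\gamma,\beta)}(q)=O((\log\log q)^{1/2})$ — transferred to the sequence $q\beta-\gamma'\bmod 1$ via its Ostrowski/three-distance description — gives an equidistribution estimate of the shape $\#\{q\le Q:\|q\beta-\gamma'\|<\delta\}=2\delta Q+o(\delta Q)$ uniformly for $\delta\ge Q^{-\omega(Q)}$, the choice $\omega(q)\asymp(\log\log q)^{-1/2}$ being precisely what makes this range admissible. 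Hence on each dyadic block of $q$'s the set $T$ carries only an $o(1)$-proportion of $\sum_q\psi(q)\log q/(\log\log q)^{1/2}$, so by Cauchy condensation (using monotonicity of $\psi$) the part of the divergence supported off $T$ still diverges; converting the weight via $1/\|q\beta-\gamma'\|\asymp 2^j$ on $S_j$ and summing $\sum_{0\le j\ll\omega(q)\log q}2^{-j}Q\cdot 2^{j}\asymp Q\,\omega(Q)\log Q$ over a block then yields
\[
\sum_{q\notin T}\frac{\psi(q)}{\|q\beta-\gamma'\|}\ \asymp\ \sum_q\psi(q)\,\omega(q)\log q\ \asymp\ \sum_q\psi(q)\frac{\log q}{(\log\log q)^{1/2}}\ =\ \infty .
\]
This is the Gallagher-type logarithmic gain, now extracted from a clean one-dimensional hypothesis on $\psi$.

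It remains to show that this divergence forces $|W(\psi',\gamma)|=1$, and for this I would argue slice by slice. For $q\in S_j$ one has $\psi'(q)>2^j\psi(q)$, so it suffices to work with $E_q=\{x\in[0,1]:\|qx-\gamma\|<2^{j}\psi(q)\}$, a union of $q$ intervals of total measure $\asymp 2^j\psi(q)$; inside a dyadic block $q\in[Q,2Q)$ monotonicity makes $\psi$ essentially constant, and $S_j\cap[Q,2Q)$ is a structured (Bohr-type, i.e.\ union-of-arithmetic-progressions) set governed by the continued fraction of $\beta$. One then estimates the pair correlations $\sum_{q,q'}|E_q\cap E_{q'}|$ against $\big(\sum_q|E_q|\big)^2$ by the method of \cite{Yu2} — the Diophantine input needed on the shift $\gamma$ being supplied by the Diophantine exponent of $(\gamma,\beta)$ — sums the resulting quasi-independent contributions over the $\asymp\omega(q)\log q$ admissible slices $j$ and over dyadic blocks, and concludes with the divergence Borel--Cantelli lemma.

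The main obstacle is this final pair-correlation estimate made uniform over all scales at once: there are $\asymp\omega(q)\log q$ slices, on the largest of which $2^j$ is as big as $q^{\omega(q)}$, and the overlaps $|E_q\cap E_{q'}|$ carry the usual dependence on $\gcd(q,q')$ and on the Diophantine behaviour of $\gamma$ and $\beta$; they must be controlled with a saving strong enough to survive summation over both the $j$'s and the gcd-structure of pairs in a block. It is here that the monotonicity of $\psi$ and the bound $\sigma_{(\gamma,\beta)}(q)=O((\log\log q)^{1/2})$ are played off against each other, the loss from the growth of $\sigma$ being absorbed exactly by the factor $(\log\log q)^{-1/2}$ built into the divergence hypothesis; everything else — the equidistribution count, the Cauchy condensation, and the reduction of $W$ to $W(\psi',\gamma)$ — is routine.
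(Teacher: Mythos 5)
Your overall framework matches the paper's: reduce to $W(\psi',\gamma)$, set $\omega(q)\asymp(\log\log q)^{-1/2}$ (this is forced by the one-dimensional discrepancy bound coming from $\sigma_\beta(q)=O((\log\log q)^{1/2})$ via Erd\H{o}s--Tur\'an--Koksma), decompose $q$ by the dyadic size of $\|q\beta-\gamma'\|$, use equidistribution plus Cauchy condensation to convert $\sum\psi(q)\log q/(\log\log q)^{1/2}=\infty$ into $\sum_q\psi'(q)=\infty$, then run a second-moment/quasi-independence argument and close with divergence Borel--Cantelli. The divergence computation $\sum_{q\notin T}\psi'(q)\asymp\sum_q\psi(q)\omega(q)\log q$ is correct and essentially reproduces the paper's estimate (\ref{E5'}) with $\omega$ now a function of $q$.

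However, there is a genuine gap at the centre of the argument, and you've flagged it yourself as "the main obstacle" without resolving it. The paper does not try to prove the pair-correlation bound directly from the raw $\psi'$. The essential intermediate step is to first restrict $\psi'$ to integers $q$ with $F(q)=\sum_{r\mid q}\frac{\log r}{r}\leq H'$ for a fixed constant $H'$, and to show that the divergence $\sum_q\psi'(q)=\infty$ survives this restriction. This is where monotonicity of $\psi$ is really spent: the paper's estimates (\ref{E4'}) and (\ref{E5'}) compare, on each block $G^l_\omega\cap[Q/2,Q]$, the average of $F$ against its typical size and conclude that the proportion of $q$ with $F(q)>H'$ is small, uniformly in $l$; only then does the restricted sum diverge. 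Without this restriction, the paper's second-moment bound (Lemma \ref{Lemma3}, second part) reads
\[
\sum_{1\le q'<q}|A_q\cap A_{q'}|=O\bigl(\psi'(q)+\psi'(q)F(q)\bigr)+4(1+C_0/(2H))\psi'(q)\sum_{1\le q'<q}\psi'(q'),
\]
and the term $\psi'(q)F(q)$ is not $O(\psi'(q))$ for general $q$ (it can be as large as $\psi'(q)(\log q)^2/\log\log q$), so the Borel--Cantelli ratio does not tend to $1$. Your proposal never introduces $F$, never makes the $F(q)\le H'$ restriction, and never explains how the pair-correlation estimate is to be carried out with the time-varying cut-off $\omega(q)$ — it merely points to "the method of \cite{Yu2}". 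The paper's actual route is: (i) re-derive (\ref{E5'}) with $\omega$ a function to get divergence of $\sum_{F(q)\le H'}\psi'(q)$, (ii) invoke the second part of Lemma \ref{Lemma3}, which is precisely the pair-correlation estimate adapted to a varying $\omega$ and to the weaker hypothesis $\sigma_{(\gamma,\beta)}(q)=O((\log\log q)^{1/2})$, and (iii) run Step~2 of the proof of Theorem \ref{MAIN}. Items (i) and (ii) are the substance of the proof, and they are exactly what your proposal leaves open.
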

	For example the approximation function $\psi(q)=1/(q(\log q)^2 (\log\log q)^{1/2})$ satisfies the above condition. We remark that it is not possible to completely drop the Diophantine condition for $(\gamma,\beta).$ In fact, \cite[Theorem 1.14]{CT} shows that under the hypothesis (for $\psi$) in Theorem \ref{MAIN MONO2}, there exist infinitely many choices of $\gamma,\beta,\gamma'$ such that $|W(\psi,\beta,\gamma,\gamma')|=0.$  We believe that the joint Diophantine condition for $(\gamma,\beta)$ can be reduced to a Diophantine condition for $\beta$ only.
	\begin{conjecture}
		Suppose that $\psi$ is a monotonic approximation function such that
		\[
		\sum_{q=1}^{\infty} \psi(q)\frac{\log q}{(\log\log q)^{1/2}}=\infty.
		\]
		Let $\beta$ be an irrational number such that $\sigma_{\beta}(q)=O ((\log\log q)^{1/2}).$ Let $\gamma,\gamma'$ be real numbers. Then $|W(\psi,\beta,\gamma,\gamma')|=1.$
	\end{conjecture}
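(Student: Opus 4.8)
This is a conjecture; we only sketch a plausible route and indicate the likely obstruction. The plan is to adapt, directly to the fibred/weighted setting, the elementary second-moment method of \cite{Yu2} that already underlies Theorem~\ref{MAIN MONO2}, and to replace every use of the joint exponent $\sigma_{(\gamma,\beta)}$ by an argument invoking only the Diophantine hypothesis on $\beta$. As in the monotone results, one decomposes $\mathbb{N}$ dyadically according to the size of $\|q\beta-\gamma'\|$: for $j\ge 0$ put $A_j=\{q:\|q\beta-\gamma'\|\in[2^{-j-1},2^{-j})\}$, on which the multiplicative weight is $\asymp 2^j$. Monotonicity of $\psi$ (via a Cauchy-condensation-type reduction) together with the Bohr-set count $|A_j\cap[1,Q]|\asymp 2^{-j}Q$ — valid up to an error governed by the continued fraction of $\beta$, hence controlled by $\sigma_\beta(q)=O((\log\log q)^{1/2})$ — should convert the hypothesis $\sum_q\psi(q)\log q/(\log\log q)^{1/2}=\infty$ into divergence of the weighted sum $\sum_q\psi(q)/\|q\beta-\gamma'\|$ over a usable range of $q$ (as in Theorem~\ref{MAIN2} one may first truncate to $\|q\beta-\gamma'\|\ge q^{-\omega(q)}$ for a slowly decreasing $\omega$, or work with the dyadic blocks directly as in Theorem~\ref{MAIN MONO2}).

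The core is then a quasi-independence-on-average estimate for the sets $E_q=\{x\in[0,1]:\|qx-\gamma\|<\psi(q)/\|q\beta-\gamma'\|\}$, namely
\[
\sum_{q,q'\le Q}|E_q\cap E_{q'}| \ll \Big(\sum_{q\le Q}|E_q|\Big)^2
\]
up to admissible lower-order terms, after which the second-moment (divergence) form of the Borel--Cantelli lemma together with a zero--one law gives $|W(\psi,\beta,\gamma,\gamma')|=1$. The diagonal and the generic off-diagonal pairs are handled as in Theorem~\ref{MAIN MONO2}. The delicate pairs are the resonant ones: a nonempty intersection $E_q\cap E_{q'}$ forces $\|(q-q')\gamma\|$ to be small, so inside each block $A_j$ (where $q-q'$ automatically satisfies $\|(q-q')\beta\|\lesssim 2^{-j}$) one must count the $m=q-q'\le Q$ with $\|m\beta\|\lesssim 2^{-j}$ for which, in addition, $\|m\gamma\|$ is small.

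The main obstacle is precisely this count when $\gamma$ is unrestricted — possibly Liouville, possibly rational — since Theorem~\ref{MAIN MONO2} pays for it with $\sigma_{(\gamma,\beta)}$. The structural fact to exploit is that, $\beta$ being mildly Diophantine, $\{m:\|m\beta\|\lesssim 2^{-j}\}$ is, up to the $\sigma_\beta$-slack, contained in the arithmetic progression generated by the convergent denominator $q_j^*$ of $\beta$ with $\|q_j^*\beta\|\asymp 2^{-j}$, where $q_j^*\asymp 2^j$ up to that slack; hence the resonant $m\le Q$ lie in a single progression of length $\ll 2^{-j}Q$, already the ``expected'' number. A clean way to finish would be to run the second moment not over individual $q\in A_j$ but over residue classes modulo $q_j^*$: this collapses the $\gamma$-resonance within $A_j$ to a one-dimensional inhomogeneous approximation problem with shift $\gamma$ and modulus $q_j^*$, to which the method of \cite{Yu2} applies, and any adversarial conspiracy of $\gamma$ with the multiples of $q_j^*$ can then be absorbed into at most a power of $\log\log Q$ — which is what the strengthened divergence hypothesis is there to pay for. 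Making all of this uniform over $\gamma$, over $j$ (up to roughly $\omega(q)\log_2 q$ blocks), and over $Q$, and in particular confirming that the worst-case loss really is only a power of $\log\log Q$ rather than something larger, is the crux.
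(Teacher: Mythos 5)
The statement you are addressing is an open conjecture: the paper offers no proof of it, and explicitly presents it as a speculation about dropping the joint condition on $(\gamma,\beta)$ in favour of a condition on $\beta$ alone. You correctly recognize this and do not claim a proof, so there is no paper argument to compare your sketch against; the right benchmark is Theorem~\ref{MAIN MONO2}, which the paper does prove under the stronger hypothesis $\sigma_{(\gamma,\beta)}(q)=O((\log\log q)^{1/2})$.

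Your sketch accurately reproduces the skeleton of the paper's method for Theorem~\ref{MAIN MONO2} (dyadic decomposition by the size of $\|q\beta-\gamma'\|$, truncation to $\|q\beta-\gamma'\|\ge q^{-\omega(q)}$, conversion of the monotone divergence hypothesis into divergence of the weighted sum, and the second-moment Borel--Cantelli argument via Lemma~\ref{master}). You also put your finger on exactly the place where the joint condition $\sigma_{(\gamma,\beta)}$ enters: in Lemma~\ref{Lemma3}, the discrepancy estimate for the two-dimensional rotation by $(\gamma,\beta)$ is used to count pairs $q,q'$ that resonate simultaneously in both coordinates, and only the joint exponent bounds this error. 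Replacing that with structural information about the Bohr set $\{m:\|m\beta\|\lesssim 2^{-j}\}$ (approximately an arithmetic progression with modulus a convergent denominator $q_j^*\asymp 2^j$) and then treating the $\gamma$-resonance on that progression as a one-dimensional inhomogeneous problem is a sensible plan, and is philosophically close to the Bohr-set analysis of Chow--Technau that the paper cites as the way to handle arbitrary $\gamma$.

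The genuine gap, which you flag but do not close, is the claim that the adversarial interaction between $\gamma$ and the multiples of $q_j^*$ costs at most a power of $\log\log Q$, uniformly in $j$ and in $\gamma$. If $\gamma$ is Liouville or rational, then for infinitely many $j$ the quantity $\|q_j^*\gamma\|$ can be astronomically small, so the orbit $\{m q_j^* \gamma \bmod 1\}$ concentrates and the resonant count within the progression can blow up well beyond the expected $2^{-j}Q$ scale; nothing in the hypothesis $\sigma_\beta(q)=O((\log\log q)^{1/2})$ constrains how the convergent denominators of $\beta$ interact with $\gamma$. Quantifying and summing this loss over the roughly $\omega(q)\log q$ dyadic blocks, and showing it is absorbed by the extra $(\log\log q)^{1/2}$ in the divergence hypothesis, is exactly the open problem; your sketch correctly identifies it but does not resolve it, so the statement remains a conjecture.
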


	\section{Notation}\label{notation}
	
	\begin{itemize}
		\item $d(.)$ is the divisor function. We will not use any other 'reserved' arithmetic functions. For example,  $\omega$ in this paper is NOT the distinct prime factors function.
		\item $A^{\psi,\gamma}_q$: 
		Let $\psi$ be an approximation function and $\gamma$ be a real number. For each integer $q\geq 1,$ we use $A^\gamma_q$ to denote the set
		\[
		A^{\psi,\gamma}_q=\{x\in [0,1]: \|qx-\gamma\|<\psi(q)\}.
		\]
		We can assume that $\psi(q)<1/2$ for all $q\geq 1.$ In fact, if there are infinitely many $q$ with $\psi(q)\geq 1/2,$ then $W(\psi,\gamma)$ would be the whole unit interval. If $\gamma,\psi$ is clear from the context, we will write $A_q$ instead of $A^{\psi,\gamma}_q.$
		\item $\chi_A$: The indicator function of a set $A.$
		\item $B(x,r)$: Metric ball centred at $x$ with radius $r,$ where $r>0$ and $x$ belongs to a metric space.
		\item $I_r=\chi_{B(0,r)}.$ Namely, $I_r:\mathbb{R}\to\{0,1\}$ is such that $I_r(x)=1$ if and only if $x\in [-r.r].$
		\item $\Delta_{\psi}(q,q')$: The value $q\psi(q')+q'\psi(q)$, where $\psi$ is a given approximation function and $q,q'$ are positive integers. When $\psi$ is clear from the context, we write it as $\Delta(q,q').$
		\item $\|x\|$: The distance of a real number $x$ to the set of integers.
		\item $\{x\}$: The unique number in $(-1/2,1/2]$ with $\{x\}-x$ being an integer.
		\item $\log $: Base $2$ logarithmic function.
		\item $\mathcal{I}_M$: The collection of intervals $I\subset [0,1]$ of length $1/M$ and with endpoints in $M^{-1}\mathbb{N},$ where $M\geq 1$ is an integer.
		\item $|A|$: The Lebesgue measure of $A\subset \mathbb{R}$ where $A$ is a Lebesgue measurable set.
		\item Conditioned sums: Let $A$ be a countable set of positive numbers. It is standard to use $\sum_{x\in A} a$ for the sum of elements in $A$ since the order of summation does not matter. Sometimes, $A$ can be described by some properties, say $P.$ By saying that under the condition $P$
		\[
		\sum_{x\in (0,\infty)}x,
		\]
		we actually mean $\sum_{x\in A} x.$ This has an advantage when $P$ is a complicated property, and it would be too long to appear under the summations symbol. More generally, let $B$ be a set of positive numbers, by saying that under the condition $P$
		\[
		\sum_{x\in B}x,
		\]
		we mean
		\[
		\sum_{\substack{x\in B\\x\text{ saisfies } P}}x.
		\]
		\item Asymptotic symbols: For two functions $f,g:\mathbb{N}\to (0,\infty)$ we use $f=O(g)$ to mean that there is a constant $C>0$ with
		\[
		f(q)\leq Cg(q)
		\]
		for all $q\geq 1.$
		We use $f=o(g)$ to mean that 
		\[
		\lim_{q\to\infty} \frac{f(q)}{g(q)}=0.
		\]
		For convenience, we also use $O(g), o(g)$ to denote an auxiliary function $f$ with the property that $f=O(g)$, $o(g)$ respectively. The precise form of the function $f$ changes across the contexts, and it can always be explicitly written down.
	\end{itemize}

	\section{Preliminaries}
	There are several standard results that will be needed in the proofs of the main results. The first one is the Borel-Cantelli lemma. The following version can be found in \cite[Proposition 2]{BDV ref}.
	\begin{lemma}[Borel-Cantelli]\label{Borel}
		Let $(\Omega, \mathcal{A}, m)$ be a probability space and let $E_1, E_2, \ldots \in \mathcal{A}$ be a sequence of events in $\Omega$ such that $\sum_{n=1}^{\infty}{m(E_n)} = \infty$. Then 
		\[m(\limsup_{n \to \infty}{E_n}) \geq \limsup_{Q \to \infty}{\frac{\left(\sum_{s=1}^{Q}{m(E_s)}\right)^2}{\sum_{s,t=1}^{Q}{m(E_s \cap E_t)}}}.\]
		If $\sum_{n=1}^{\infty}{m(E_n)} < \infty,$ then $m(\limsup_{n \to \infty}{E_n})=0.$
	\end{lemma}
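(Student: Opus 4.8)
The statement splits into the familiar two halves, and I would prove them independently. The convergence half is the classical (``easy'') Borel--Cantelli lemma: if $\sum_{n}m(E_n)<\infty$, then for every $N$ one has $\limsup_{n\to\infty}E_n\subseteq\bigcup_{n\ge N}E_n$, so $m(\limsup_{n\to\infty}E_n)\le\sum_{n\ge N}m(E_n)$, and the tail of a convergent series tends to $0$; hence $m(\limsup_{n\to\infty}E_n)=0$. All the work is in the divergence half, which I would handle by a second-moment (Paley--Zygmund / Cauchy--Schwarz) argument.

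For the divergence half, fix integers $N\le Q$, put $U_{N,Q}=\bigcup_{n=N}^{Q}E_n$, and consider the counting function $f_{N,Q}=\sum_{n=N}^{Q}\chi_{E_n}$, which is supported on $U_{N,Q}$. Then $\int f_{N,Q}\,dm=\sum_{n=N}^{Q}m(E_n)$ and $\int f_{N,Q}^{\,2}\,dm=\sum_{s,t=N}^{Q}m(E_s\cap E_t)$. Writing $f_{N,Q}=f_{N,Q}\cdot\chi_{U_{N,Q}}$ and applying the Cauchy--Schwarz inequality gives
\[
\Big(\sum_{n=N}^{Q}m(E_n)\Big)^{2}=\Big(\int f_{N,Q}\,dm\Big)^{2}\le m(U_{N,Q})\cdot\int f_{N,Q}^{\,2}\,dm,
\]
so that $m(U_{N,Q})\ge\big(\sum_{n=N}^{Q}m(E_n)\big)^{2}\big/\sum_{s,t=N}^{Q}m(E_s\cap E_t)$. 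Letting $Q\to\infty$ and using $m(\bigcup_{n\ge N}E_n)\ge m(U_{N,Q})$ for every $Q$ yields
\[
m\Big(\bigcup_{n\ge N}E_n\Big)\ \ge\ \limsup_{Q\to\infty}\ \frac{\big(\sum_{n=N}^{Q}m(E_n)\big)^{2}}{\sum_{s,t=N}^{Q}m(E_s\cap E_t)}.
\]

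It then remains to pass from this $N$-truncated bound to the one in the statement and to let $N\to\infty$. Set $a_Q=\sum_{n=1}^{Q}m(E_n)$ and $b_Q=\sum_{s,t=1}^{Q}m(E_s\cap E_t)$. For fixed $N$ we have $\sum_{n=N}^{Q}m(E_n)=a_Q-a_{N-1}$, and since $a_Q\to\infty$ this equals $(1+o(1))\,a_Q$ as $Q\to\infty$; moreover $\sum_{s,t=N}^{Q}m(E_s\cap E_t)\le b_Q$. Hence the $N$-truncated ratio is at least $(1+o(1))\,a_Q^{2}/b_Q$, so its $\limsup$ over $Q$ is at least $\limsup_{Q\to\infty}a_Q^{2}/b_Q$, giving $m(\bigcup_{n\ge N}E_n)\ge\limsup_{Q\to\infty}a_Q^{2}/b_Q$ for \emph{every} $N$. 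Since the sets $\bigcup_{n\ge N}E_n$ decrease to $\limsup_{n\to\infty}E_n$ and $m$ is a finite measure, continuity from above gives $m(\limsup_{n\to\infty}E_n)=\lim_{N\to\infty}m(\bigcup_{n\ge N}E_n)\ge\limsup_{Q\to\infty}a_Q^{2}/b_Q$, which is exactly the asserted inequality. I do not expect a genuine obstacle here — this is a standard lemma, recorded e.g. in \cite{BDV ref} — and the only delicate point is precisely the truncation-removal step, which works only because the divergence hypothesis $a_Q\to\infty$ makes the finite correction $a_{N-1}$ asymptotically negligible while the double sum over the tail is trivially dominated by $b_Q$.
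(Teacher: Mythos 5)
Your proof is correct: the convergence half is the classical easy Borel--Cantelli argument, and the divergence half is the standard second-moment (Cauchy--Schwarz) proof, with the truncation-removal step handled properly via $a_Q\to\infty$ and continuity from above. The paper does not prove this lemma itself but simply cites \cite{BDV ref}, and your argument is essentially the standard one recorded there, so there is nothing to compare beyond noting the match.
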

	We also need the following discrepancy result, \cite[Section 1.4.2]{DT97}. Let $k\geq 1$ be an integer. Let $(a_1,\dots,a_k)\in \mathbb{T}^k$ be a $\mathbb{Q}$-linear independent vector. Let $I\subset\mathbb{T}^k$ be a box (a Cartesian product of intervals). We wish to count
	\[
	S_I(Q,a_1,\dots,a_k)=\#\{q\in \{1,\dots,Q\}:  q(a_1,\dots,a_k)\in I\}.
	\]
	We write
	\[
	S_I(Q,a_1,\dots,a_k)=Q |I|+ E^I_{a_1,\dots,a_k}(Q).
	\]
	We have the following upper bound for the error term $E^I_{a_1,\dots,a_k}(Q)$
	\[
	E^I_{a_1,\dots,a_k}(Q)=o_I(Q).
	\]
	This follows from the ergodicity of irrational rotation $+(a_1,\dots,a_k)$ on $\mathbb{T}^k.$ In some special case, we have a much better understanding of the error term. Suppose that $(a_1,\dots,a_k)$ is Diophantine, for example, when $a_1,\dots,a_k$ are $\mathbb{Q}$-linearly independent algebraic numbers. Then there are a number $\rho\in (0,1)$, a constant $C>0,$ such that for all boxes $I,$ all  integers $Q\geq 1,$  
	\[
	|E^I_{a_1,\dots,a_k}(Q)|\leq CQ^{\rho}. 
	\]
	The infimum of all such $\rho$ is called the discrepancy exponent of $(a_1,\dots,a_k).$ We have the following result \cite[Theorem 1.80]{DT97} which related discrepancy exponent to Diophantine exponent.
	\begin{lemma}\label{Discrepancy bound}
		Let $a_1,\dots,a_k$ be $k\geq 1$ $\mathbb{Q}$-linearly independent numbers. Suppose that the Diophantine exponent of $(a_1,\dots,a_k)$ is $\rho$ (which is at least $k$). Then the discrepancy exponent of $(a_1,\dots,a_k)$ is at most $1-1/\rho.$
	\end{lemma}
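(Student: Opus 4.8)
The plan is to derive the discrepancy bound from the Erd\H{o}s--Tur\'an--Koksma inequality, feeding in the quantitative lower bounds for $\|\bh\cdot\bfalp\|$ supplied by the Diophantine hypothesis. Write $\bfalp=(a_1,\dots,a_k)$ and $e(t):=e^{2\pi i t}$. Unwinding the definitions, the assertion ``discrepancy exponent $\le 1-1/\rho$'' is the statement that the discrepancy $D_Q:=\sup_I\bigl|\tfrac1Q S_I(Q,\bfalp)-|I|\bigr|$ of $\{q\bfalp\}_{q\le Q}$ satisfies $D_Q\ll_\eps Q^{-1/\rho+\eps}$ for every $\eps>0$; so it is enough to show that for each $\rho'>\rho$ one has $D_Q\ll_{\rho'}Q^{-1/\rho'}\log Q$, and then let $\rho'\downarrow\rho$. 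Since $\rho$ is the \emph{infimum} of admissible Diophantine exponents, I would fix such a $\rho'$ and a constant $c=c(\rho')>0$ with $\|\bh\cdot\bfalp\|\ge c\,\|\bh\|_\infty^{-\rho'}$ for all $\bh\in\Z^k\setminus\{\bzero\}$.

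Next I would invoke the Erd\H{o}s--Tur\'an--Koksma inequality (see \cite{DT97}): for every integer $H\ge1$,
\[
D_Q\ \ll_k\ \frac1H+\sum_{0<\|\bh\|_\infty\le H}\frac1{r(\bh)}\Bigl|\frac1Q\sum_{q=1}^{Q}e(q\,\bh\cdot\bfalp)\Bigr|,\qquad r(\bh):=\prod_{l=1}^{k}\max(1,|h_l|).
\]
Bounding the inner exponential sum trivially by $\bigl|\sum_{q\le Q}e(q\theta)\bigr|\le\tfrac1{2\|\theta\|}$ and then making the decisive choice $H:=\lfloor(cQ)^{1/\rho'}\rfloor$, every $\bh$ with $0<\|\bh\|_\infty\le H$ obeys $\|\bh\cdot\bfalp\|\ge cH^{-\rho'}\ge1/Q$, so the displayed bound collapses to
\[
D_Q\ \ll\ \frac1H+\frac1Q\,\Sigma,\qquad \Sigma:=\sum_{0<\|\bh\|_\infty\le H}\frac1{r(\bh)\,\|\bh\cdot\bfalp\|}.
\]
As $\tfrac1H\asymp Q^{-1/\rho'}$, everything reduces to proving $\Sigma\ll H^{\rho'-1}\log H\asymp Q^{1-1/\rho'}\log Q$.

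Estimating $\Sigma$ is the heart of the matter, and the step I expect to have to fight with. I would decompose dyadically, $\|\bh\|_\infty\in(2^{i-1},2^i]$ with $1\le i\lesssim\log H$ and $\|\bh\cdot\bfalp\|\in(2^{-j-1},2^{-j}]$ with $j\ge0$, and exploit two consequences of the Diophantine inequality. First, \emph{resonances are forced to be large}: if $\|\bh\|_\infty\le2^i$ then $\|\bh\cdot\bfalp\|\ge c\,2^{-i\rho'}$, so the block $(i,j)$ is empty once $j>i\rho'+O(1)$, and on a surviving block $r(\bh)\ge\|\bh\|_\infty>2^{i-1}$, whence $(r(\bh)\|\bh\cdot\bfalp\|)^{-1}\ll 2^{\,j-i}$. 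Second, \emph{resonances are sparse}: if $\|\bh_1\cdot\bfalp\|,\|\bh_2\cdot\bfalp\|\le2^{-j}$ then $\|(\bh_1-\bh_2)\cdot\bfalp\|\le2^{1-j}$, so $\|\bh_1-\bh_2\|_\infty\gg2^{j/\rho'}$; hence the $\bh$ in $[-2^i,2^i]^k$ with $\|\bh\cdot\bfalp\|\le2^{-j}$ form a $\gg2^{j/\rho'}$-separated set and number $\ll(2^{\,i-j/\rho'}+1)^k$. Multiplying the per-point bound by this cardinality and summing the resulting geometric series, first over $j$ (dominated by $j\asymp i\rho'$) and then over $i$ (dominated by $i\asymp\log H$), yields $\Sigma\ll H^{\rho'-1}\log H$, as wanted; hence $D_Q\ll Q^{-1/\rho'}\log Q$ and the lemma follows on letting $\rho'\downarrow\rho$.

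Finally, it is worth flagging why this bookkeeping cannot be shortcut. A crude application of Erd\H{o}s--Tur\'an--Koksma --- bounding $\bigl|\tfrac1Q\sum_q e(q\bh\cdot\bfalp)\bigr|\le\tfrac1{2Q\|\bh\cdot\bfalp\|}\le\tfrac{\|\bh\|_\infty^{\rho'}}{2cQ}$ for \emph{every} $\bh$ and then summing --- loses a full power of $Q$ and only gives the weaker exponent $1-\tfrac1{\rho'+1}$. Getting the sharp $1-1/\rho$ genuinely requires pitting the rarity of near-resonances (the separation estimate) against the fact that they must carry a large value of $r(\bh)$; tracking the dyadic sums and the (harmless) logarithmic losses is the only real labour. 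An essentially equivalent route would replace Erd\H{o}s--Tur\'an--Koksma by sandwiching $\chi_I$ between Beurling--Selberg (or Fej\'er-smoothed) majorants and minorants, with the Diophantine input entering at precisely the same point.
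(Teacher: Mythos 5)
Your proof is correct. There is nothing in the paper to compare it against: the paper does not prove this lemma but quotes it directly from \cite[Theorem 1.80]{DT97}. Your route --- Erd\H{o}s--Tur\'an--Koksma, the bound $|\sum_{q\le Q}e(q\theta)|\le 1/(2\|\theta\|)$, the choice $H\asymp Q^{1/\rho'}$, and then the dyadic decomposition in $\|\bh\|_\infty$ and $\|\bh\cdot\bfalp\|$ combined with the separation count $\ll(2^{i-j/\rho'}+1)^k$ for near-resonant frequencies --- is exactly the standard proof of that cited theorem, and the details check out: $r(\bh)\ge\|\bh\|_\infty$ gives the per-point bound $\ll 2^{j-i}$, the condition $\rho'>\rho\ge k$ makes the sum over $j$ geometric with dominant term $j\asymp i\rho'$, and the resulting $\Sigma\ll H^{\rho'-1}\log H$ yields $D_Q\ll Q^{-1/\rho'}\log Q$, which suffices after letting $\rho'\downarrow\rho$. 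Your closing observation is also accurate and worth keeping in mind when reading the rest of the paper: the ``crude'' application of Erd\H{o}s--Tur\'an--Koksma that loses the sharp exponent and only gives $1-1/(\rho+1)$ is precisely the computation the author carries out in Section \ref{W} (see (\ref{R1})), where the Diophantine exponent is allowed to grow with $N$ and the lossier but more uniform bound is all that is needed.
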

	We use $D_{a_1,\dots,a_k}(Q)$ to denote
	\[
	\sup_{I}E^I_{a_1,\dots,a_k}(Q)
	\]
	where the $\sup$ is taken over all boxes $I\subset [0,1]^k.$ Let $M_1,\dots,M_k$ be integers. It can be checked that
	\[
	D_{M_1a_1,\dots,M_ka_k}(Q)\leq M_1\dots M_k D_{a_1,\dots,a_k}(Q).
	\]

	Let $\psi$ be an approximation function and $\gamma$ be a real number. In order to use Lemma \ref{Borel}, we need to estimate the size of intersections $A_q\cap A_{q'}.$ We have the following result from \cite[Lemma 4.1]{Yu2}. 
	\begin{lemma}\label{master}
		Let $H>2$ be an integer. Let $\psi$ be an approximation function and $\gamma$ be an irrational number. For integers $1\leq q'<q$ such that $\Delta(q',q)<H\gcd(q,q')$ we have the following estimate
		\[
		|A_q\cap A_{q'}|\leq 2(2H+1)\min\{\psi(q)/q, \psi(q')/q'\} \gcd(q,q') I_{\Delta(q,q')/\gcd(q,q')}(\{\gamma(q'-q)/\gcd(q,q')\}).
		\]
		Otherwise when $\Delta(q,q')\geq H\gcd(q,q')$, we have
		\[
		|A_q\cap A_{q'}|\leq 4(1+C_0/(2H))\psi(q)\psi(q'),
		\]
		where $C_0>1$ is an absolute constant.
	\end{lemma}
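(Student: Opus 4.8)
I would run an elementary interval count, carried out on the circle $\R/\Z$, where $|A_q\cap A_{q'}|$ is unchanged. Since $\psi(q)<1/2$, the set $A_q$ is the disjoint union over the $q$ residues $n\bmod q$ of the arcs $I_q(n)$ of radius $\psi(q)/q$ centred at $(n+\gamma)/q$, and similarly $A_{q'}$ is the disjoint union over $m\bmod q'$ of arcs $I_{q'}(m)$ of radius $\psi(q')/q'$ centred at $(m+\gamma)/q'$. Hence $A_q\cap A_{q'}=\bigsqcup_{n,m}\bigl(I_q(n)\cap I_{q'}(m)\bigr)$ is a \emph{disjoint} union, and $I_q(n)\cap I_{q'}(m)\neq\emptyset$ exactly when the two centres are closer than $\psi(q)/q+\psi(q')/q'=\Delta(q,q')/(qq')$, i.e.\ (clearing denominators)
\[
\bigl\|q'n-qm+(q'-q)\gamma\bigr\|_{qq'}<\Delta(q,q'),
\]
where $\|\cdot\|_{qq'}$ denotes distance to the nearest multiple of $qq'$. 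Writing $g=\gcd(q,q')$, $a=q/g$, $b=q'/g$, the map $(n,m)\mapsto q'n-qm$ is a well-defined homomorphism $\Z/q\Z\times\Z/q'\Z\to\Z/qq'\Z$ onto $g\Z/qq'\Z$ with every fibre of size exactly $g$. Putting $q'n-qm=gk$, the condition above reads $\delta_k:=\|gk+(q'-q)\gamma\|_{qq'}<\Delta(q,q')$ and depends only on $k$; moreover for a fixed admissible $k$ the $g$ pairs in its fibre are $(n_0+aj,m_0+bj)$, $0\le j<g$, and a one-line computation shows the two centres differ by the same amount for all $j$ (the two progressions of arc-centres rotate in lockstep). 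So all $g$ of the corresponding intersections share a common measure $\mu_k$, and
\[
\mu_k\le\frac{1}{qq'}\min\bigl(\Delta(q,q')-\delta_k,\ 2\min\{q'\psi(q),q\psi(q')\}\bigr),\qquad |A_q\cap A_{q'}|=g\!\!\!\sum_{k:\,\delta_k<\Delta(q,q')}\!\!\!\mu_k .
\]

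Next I would locate the admissible $k$. The numbers $gk+(q'-q)\gamma$ reduced mod $qq'$ sweep out a $g$-spaced coset of $\Z/qq'\Z$, so the admissible $k$ are exactly those where this coset meets the length-$2\Delta(q,q')$ arc $(-\Delta(q,q'),\Delta(q,q'))$ of $\Z/qq'\Z$; this arc contains at most $2\Delta(q,q')/g+1$ points of the coset, it is non-empty iff the distance from the coset to $0$ is $<\Delta(q,q')$, and that distance equals $g\,|\{\gamma(q'-q)/g\}|$. When $\Delta(q,q')<Hg$ this gives at most $2\Delta(q,q')/g+1<2H+1$ admissible $k$, and there is none unless $|\{\gamma(q'-q)/g\}|<\Delta(q,q')/g$, i.e.\ unless $I_{\Delta(q,q')/g}(\{\gamma(q'-q)/g\})=1$. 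Since each $\mu_k\le 2\min\{q'\psi(q),q\psi(q')\}/(qq')=2\min\{\psi(q)/q,\psi(q')/q'\}$, summing over the admissible $k$ and multiplying by $g$ yields the first bound of the lemma (both sides vanishing when the indicator does).

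For the case $\Delta(q,q')\ge Hg$ the indicator is identically $1$, so I would instead show $|A_q\cap A_{q'}|$ is within a factor $1+O(1/H)$ of the ``independent'' value $4\psi(q)\psi(q')$. Put $u=\min\{q'\psi(q),q\psi(q')\}$ and $v=\max\{q'\psi(q),q\psi(q')\}$, so $\Delta(q,q')=u+v\ge 2u$ and $uv=qq'\psi(q)\psi(q')$. The admissible $\delta_k$ form two $g$-spaced families inside $[0,\Delta(q,q'))$; comparing the sum of $\min(\Delta(q,q')-\delta_k,2u)$ over each family with the corresponding integral $\tfrac1g\int_0^{\Delta(q,q')}\min(\Delta(q,q')-x,2u)\,dx=\tfrac{2uv}{g}$ (the error being $O(u)$, bounded by the total variation) gives
\[
\sum_{k:\,\delta_k<\Delta(q,q')}\min\bigl(\Delta(q,q')-\delta_k,\,2u\bigr)\le\frac{4uv}{g}+O(u).
\]
Multiplying by $g/(qq')$ turns the main term into $4uv/(qq')=4\psi(q)\psi(q')$ and the error into $O(gu/(qq'))$; and since $\Delta(q,q')\ge Hg$ forces $v\ge\Delta(q,q')/2\ge Hg/2$, one has $gu/(qq')=(g/v)\psi(q)\psi(q')\le(2/H)\psi(q)\psi(q')$, so the whole bound is $4\psi(q)\psi(q')\bigl(1+O(1/H)\bigr)=4\bigl(1+C_0/(2H)\bigr)\psi(q)\psi(q')$ for a suitable absolute $C_0>1$. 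The main obstacle is this last stage: one must keep all the fence-post and Riemann-sum errors in the two progressions genuinely lower order after multiplying by $g/(qq')$, and it is precisely the hypothesis $\Delta(q,q')\ge Hg$ that secures this, by forcing $\max\{q'\psi(q),q\psi(q')\}$ to dominate $\gcd(q,q')$.
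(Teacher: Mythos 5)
Your argument is correct: the decomposition of $A_q\cap A_{q'}$ into a disjoint double union of arc intersections, the reduction to the single parameter $k=(q'n-qm)/\gcd(q,q')$ with fibres of size $\gcd(q,q')$, the count of at most $2\Delta/\gcd(q,q')+1<2H+1$ admissible $k$ governed by the indicator $I_{\Delta/\gcd(q,q')}(\{\gamma(q'-q)/\gcd(q,q')\})$, and the Riemann-sum comparison in the regime $\Delta\ge H\gcd(q,q')$ all check out. The paper does not prove this lemma itself but quotes it from \cite[Lemma 4.1]{Yu2}, and your elementary overlap-counting is essentially the argument given there, so no further comparison is needed.
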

	\section{Outline of the proofs}
	The proofs of the main theorems are very similar. Here we provide an outline of the proof of Theorem \ref{MAIN}. Let $\beta,\gamma,\gamma'$ be real numbers and $\psi$ be an approximation function.  We want to consider the limsup of the sets $\{A_q=A^{\psi',\gamma}_q\}_{q\geq 1}$ where 
\[
\psi'(q)=\frac{\psi(q)}{\|q\beta-\gamma'\|}.
\]
We will use the divergence Borel-Cantelli lemma (Lemma \ref{Borel}). For this, we need to study the intersections $A_q\cap A_{q'}$ for different positive integers $q,q'.$ In this direction, Lemma \ref{master} offers us some helpful information. We need to use this lemma to provide some estimates on the sum
\[
\sum_{1\leq q'\leq q}|A_{q'}\cap A_q|
\]
which will be helpful for the divergence Borel-Cantelli lemma. To estimate the above sum, we see that it is helpful to study the sequence of $0,1$'s,
\[
\{I_{\Delta(q,q')/\gcd(q,q')}(\{\gamma(q'-q)/\gcd(q,q')\})\}_{1\leq q'\leq q}.
\]
This sequence is essentially driven by the irrational rotation (on the unit interval) with rotating angle $\gamma.$ Due to the uniform ergodicity of irrational rotations, we expect that the above sequence over $\{0,1\}$ is `well distributed'. More precisely, we expect that the appearances of $1$'s are almost periodic with gaps around $1/\{\gamma\}.$ Up to some quantifiable measures (depending on $\gamma$), we can treat this sequence as the constant sequence of $1$'s. With this strategy, we have resolved the inhomogeneous parameter $\gamma.$ 

Next, we consider the inhomogeneous parameter $\gamma'.$ It is now carried by the modified approximation function $\psi'.$ The factor $\|q\beta-\gamma'\|$ makes $\psi'$ appear somehow irregular. Luckily, $\psi'$ is not too irregular. The strategy is to flatten the range of $\psi'.$ For example, let $u\in (0,1/2)$ be a number and we consider set $G_u$ of $q$'s with
\[
\|q\beta-\gamma'\|\in [u,2u].
\]
On $G_u$, we can essentially ignore the effect of the factor   $\|q\beta-\gamma'\|$. Here, the set $G_u$ is controlled by the irrational rotation with angle $\beta.$ Again, by ergodicity, we expect that $G_u$ is `well distributed' in $\mathbb{N}.$  We can then use this fact and Lemma \ref{master} to obtain good estimates for
\[
\sum_{\substack{1\leq q'\leq q\\ \|q'\beta-\gamma'\|\in [u,2u]}}|A_{q'}\cap A_q|
\]
with various $u.$ Then we can sum the above estimates for different values of $u$ and obtain a good estimate for 
\[\sum_{1\leq q'\leq q}|A_{q'}\cap A_q|.\]
This is the way we treat the inhomogeneous parameter $\gamma'.$

From here, we see that two irrational rotations (with parameters $\gamma$ and $\beta$) dominate the above arguments. A very crucial point here is that we need to treat them at the same time. This effectively leads to the consideration of the two-dimensional irrational rotation with angle $(\gamma,\beta).$ In order for this to work, we need that the pair $(\gamma,\beta)$ is Diophantine. This condition basically says that the orbit $\{(q\gamma,q\beta)\mod \mathbb{Z}^2\}_{q\geq 1}$ is 'well distributed' over $[0,1]^2.$ With this regularity, we can partially ignore the effects caused by the inhomogeneous parameters $\gamma,\gamma'$ and the result will follow.
	\section{Bounding intersections}
	The main purpose of this section is to prove several lemmas on summing over measures of intersections. Throughout the rest of this paper, we let $H>2$ be an integer and $C_0$ be as in Lemma \ref{master}. The results of this section are the following  Lemmas \ref{Lemma} and \ref{Lemma3}. The proofs are complicated and the reader can skip the proofs in this section  for now and read Section \ref{proofofmain} to see how to used them to prove the main theorems.
	
	In what follows, for each integer $q>1,$ define
	\[
	F(q)=\sum_{r|q} \frac{\log r}{r}.
	\]
	Suppose that $\psi$ is an approximation function. Let $\beta,\gamma,\gamma'$ be real numbers and $\omega$ be a positive number. Define
	\begin{align}
	\psi_{\beta,\gamma',\omega}^{'}(q)=
	\begin{cases}
	\frac{\psi(q)}{\|\beta q-\gamma'\|} & \|q\beta-\gamma'\|\in [q^{-\omega},1]\\
	0 & \text{else}
	\end{cases}.\label{psi}
	\end{align}
	Often, $\beta,\gamma'$ are clear from the context. If so, we write $\psi'_{\omega}$ instead of $\psi'_{\beta,\gamma',\omega}.$ In addition, if $\omega$ is also clear from the context, we simply write $\psi'.$ Here, $\omega$ may not be a constant as $q$ varies. In fact, we will consider the situation when $\omega:\mathbb{N}\to [0,\infty)$ is a function. In this case, we write $\psi'_{\omega(.)}$ for the function
	\[
	q\to \psi'_{\omega(q)}(q).
	\]
	
	\begin{lemma}\label{Lemma}
		Suppose that $\psi(q)=O(q^{-1}(\log q)^{-1}(\log\log q)^{-2}).$ Let $\gamma$ be an irrational non-Liouville number, $\beta$ be a real number such that $(\beta,\gamma)$ is Diophantine and $\gamma'$ be a real number. Then there is a positive number $\omega_0$ depending on the Diophantine exponents of $\gamma,\beta, (\beta,\gamma)$ such that for all $\omega\in (0,\omega_0),$ the approximation function $\psi'=\psi'_{\beta,\gamma',\omega}$ satisfies
		\[
		\sum_{1\leq q'<q} |A^{\psi',\gamma}_q\cap A^{\psi',\gamma}_{q'}|=O\left(\psi'(q)+\frac{\psi'(q)}{(\log\log q)^2}F(q)\right)+4(1+C_0/(2H))\psi'(q)\sum_{1\leq q'< q} \psi'(q')
		\]
		where the implicit constant can be made explicit.
	\end{lemma}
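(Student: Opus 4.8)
The plan is to split the sum over $q'$ according to the two cases in Lemma \ref{master}, and within the "close" case to further stratify by the value of the gcd $g = \gcd(q,q')$ and by the dyadic size of $\|q'\beta - \gamma'\|$. First I would set $g = \gcd(q,q')$ and write $q = g a$, $q' = g b$ with $\gcd(a,b)=1$. The "far" case $\Delta(q,q') \ge Hg$ contributes at most $4(1+C_0/(2H))\psi'(q)\psi'(q')$ per term, which after summing over $q' < q$ yields exactly the last term $4(1+C_0/(2H))\psi'(q)\sum_{q'<q}\psi'(q')$ displayed in the statement; so all the work is in the "close" case $\Delta(q,q') < Hg$. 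There, Lemma \ref{master} gives
\[
|A_q \cap A_{q'}| \le 2(2H+1)\,\frac{\psi'(q)}{q}\, g\, I_{\Delta(q,q')/g}\!\left(\{\gamma(q'-q)/g\}\right),
\]
using $\min\{\psi'(q)/q,\psi'(q')/q'\} \le \psi'(q)/q$. Since $q = ga$, the factor $g/q = 1/a$, so each close term is $O_H(\psi'(q)/a)$ times the indicator. Summing $1/a$ over the relevant range of $a$ (roughly $a \le q/g$ for each fixed $g$, with the constraint $b < a$) is what will eventually produce the divisor-type quantity $F(q) = \sum_{r\mid q}\frac{\log r}{r}$.

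The heart of the argument is controlling how often the indicator $I_{\Delta(q,q')/g}(\{\gamma(q'-q)/g\})$ equals $1$. Fix $g \mid q$, set $a = q/g$, and let $q'$ range so that $b = q'/g \in \{1,\dots,a-1\}$ with $\gcd(a,b)=1$; then $q'-q = g(b-a)$ and $\gamma(q'-q)/g = \gamma(b-a)$, so the indicator is $I_{\Delta/g}(\{\gamma(b-a)\})$ and the radius is $\Delta(q,q')/g = a\psi(q')/\|q'\beta-\gamma'\| \cdot (\text{small}) + b\psi(q)/\|q\beta-\gamma'\|$, crudely $\le$ something like $a\,\psi'(q') + b\,\psi'(q)$. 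Now I would further decompose the range of $b$ by the dyadic block $\|q'\beta - \gamma'\| \in [2^{-j-1}, 2^{-j}]$, i.e. $u \approx 2^{-j}$; on each such block $\psi'(q') \asymp 2^{j}\psi(q')$ and — crucially — the joint Diophantine hypothesis on $(\beta,\gamma)$ says the orbit $(q'\gamma, q'\beta)$ is well distributed, so by Lemma \ref{Discrepancy bound} the number of $q' \le q$ landing in any box of the relevant shape is $|I|\cdot(\text{count}) + O(q^{\rho})$ for some $\rho < 1$. This lets me replace the erratic indicator/size pattern by its "expected" value: the $j$-block has length $O(2^{-j} q)$ among integers up to $q$, and within it the indicator $I_{\Delta/g}(\{\gamma(b-a)\}) = 1$ on a proportion $O(\Delta/g + \text{discrepancy})$ of residues because $\gamma$ is non-Liouville (so the one-dimensional discrepancy of $\{\gamma n\}$ is $O(n^{\rho'})$). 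Multiplying the per-term bound $O_H(\psi'(q)/a)$ by the number of qualifying $b$ and summing the geometric series over $j$ (which converges because $\psi'(q') \le \psi(q') 2^{j}$ and $\psi = O((q\log q(\log\log q)^2)^{-1})$ forces the $2^{j}$ to be dominated once $2^{-j}$ drops below $q^{-\omega}$, and the cutoff at $q^{-\omega}$ in the definition of $\psi'_\omega$ truncates the sum at $j \le \omega\log q$) produces a bound of the form $O\big(\psi'(q) + \frac{\psi'(q)}{(\log\log q)^2}\sum_{g\mid q}\frac{\log(q/g)}{q/g}\big)$, and $\sum_{g\mid q}\frac{\log(q/g)}{q/g} = F(q)$ after reindexing $r = q/g$. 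The $(\log\log q)^{-2}$ saving is exactly the slack between the hypothesis $\psi = O((q\log q(\log\log q)^2)^{-1})$ and the $\log q$ that gets consumed by the summation over $g$ and $j$; the $\omega_0$ must be chosen small enough (in terms of the discrepancy exponents of $\gamma$, $\beta$, and $(\beta,\gamma)$) that all the error terms $O(q^{\rho})$, $O(q^{\rho'})$ coming from discrepancy are genuinely lower order than the main terms, which is why $\omega_0$ depends on those exponents.

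The main obstacle I anticipate is the simultaneous bookkeeping of three competing parameters — the gcd stratification (index $g$), the dyadic stratification of $\|q'\beta-\gamma'\|$ (index $j$), and the inhomogeneous rotation by $\gamma$ inside the indicator — while keeping every discrepancy error term provably negligible. In particular one must verify that the two-dimensional orbit $(q'\gamma, q'\beta)$ is what governs the \emph{joint} event "$\|q'\beta-\gamma'\|$ in a given dyadic block AND $\{\gamma(q'-a g^{-1}\cdots)\}$ small", so that a single application of the Diophantine/discrepancy estimate for the pair $(\gamma,\beta)$ (rather than two independent one-dimensional estimates, which would not be legitimate) controls it; this is the point flagged in the outline as "we need to treat them at the same time." Getting the dependence of $\omega_0$ right, and confirming that the truncation $\|q'\beta-\gamma'\| \ge q^{-\omega}$ makes the geometric sum over $j$ converge with room to spare, is the delicate quantitative step; everything else (the "far" case, the elementary divisor-sum manipulations) is routine.
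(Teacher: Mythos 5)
Your overall architecture does match the paper's (the far/near split via Lemma \ref{master}, stratification by $g=\gcd(q,q')$ and by dyadic blocks of $\|q'\beta-\gamma'\|$, joint equidistribution of $(q'\gamma,q'\beta)$ via the Diophantine hypothesis on the pair, and the non-Liouville condition on $\gamma$ to control the indicator). However, there is a genuine gap at your very first reduction: replacing $\min\{\psi'(q)/q,\psi'(q')/q'\}$ by $\psi'(q)/q$ throughout is too lossy. The ratio $\frac{\psi'(q)/q}{\psi'(q')/q'}$ can be as large as $q^{\omega}$ — for instance when $\|q\beta-\gamma'\|\approx q^{-\omega}$ while $q'\asymp q$ and $\|q'\beta-\gamma'\|\asymp 1$. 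For such $q'$ the radius of the indicator is governed by $\Delta(q,q')/g\le 2q'\psi'(q)/g$, and the equidistribution main term of your bound for the block $q'\in[q/2,q]$, $\|q'\beta-\gamma'\|\asymp1$, $\gcd(q,q')=g$ comes out to roughly $\psi'(q)\min\{1,\ q^{\omega}/(g\log q(\log\log q)^2)\}$ per divisor $g$; already for $q=2^m$ with $\|q\beta-\gamma'\|\approx q^{-\omega}$ the sum over $g\mid q$ of the close-case terms exceeds the allowed $O(\psi'(q)+\psi'(q)F(q)/(\log\log q)^2)$ (note $F(2^m)=O(1)$). The paper avoids this by splitting into the two cases $\psi'(q')q\ge\psi'(q)q'$ and $\psi'(q')q\le\psi'(q)q'$; in the second case it keeps the genuinely smaller weight $\psi'(q')/q'$ and bounds it on each block $G^l\cap D_k(q)$ by the quantity $T_{k,l}$. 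You need this (or an equivalent device) for the main term to close.

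Two further omissions are load-bearing. First, to get $\log g$ rather than $\log q$ in the main term one must discard the range $q'<q/g^{\rho_2}$ by the trivial bound (the paper's estimate (\ref{2})), so that the dyadic decomposition of the size of $q'$ runs over only $O(\log g)$ blocks; without this truncation the block sum contributes $\asymp\omega\psi'(q)\log q\sum_{g\mid q}g^{-1}/(\log\log q)^2$, which is not $O(\psi'(q)F(q)/(\log\log q)^2)$. (Your sketch attributes the $\log$ factor to "summing $1/a$", but $a=q/g$ is determined by $g$; the $\log g$ really comes from this restricted dyadic sum.) Second, the two-dimensional discrepancy estimate for the rotation $(\gamma,g\beta)$ carries an error term with an extra factor $g$ from rescaling $\beta$ to $g\beta$; summed against the weight $\psi'(q)g/q$ over large divisors this is catastrophic, so the joint estimate can only be used for $g<q^{2\omega}$. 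For $g\ge q^{2\omega}$ the paper switches to the one-dimensional estimate for $\gamma$ together with the non-Liouville lower bound (\ref{C2}), which forces the count to vanish or be negligible. It is this interplay between the two regimes — not "a single application of the joint estimate" — that determines the admissible $\omega_0$.
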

	\begin{remark}\label{Remark}
		If we only assume $\psi(q)=O(q^{-1}(\log q)^{-1}),$ then we have the following conclusion
		\[
		\sum_{1\leq q'<q} |A^{\psi',\gamma}_q\cap A^{\psi',\gamma}_{q'}|=O\left(\psi'(q)+\psi'(q)\sum_{r|q} \frac{\log r}{r}\right)+4(1+C_0/(2H))\psi'(q)\sum_{1\leq q'< q} \psi'(q').
		\]
	\end{remark}
	
	\begin{lemma}\label{Lemma3}
		Suppose that $\psi(q)=O(q^{-1}(\log q)^{-1}(\log\log q)^{-3}).$ Let $(\gamma,\beta)$ be a pair of numbers whose $\sigma$ function satisfies $\sigma(q)=O((\log\log q)^{1/2}).$ Then there is a constant $C_1>0$ such that  the approximation function $\psi'=\psi'_{\beta,\gamma',\omega(.)}$ satisfies
		\[
		\sum_{1\leq q'<q} |A^{\psi',\gamma}_q\cap A^{\psi',\gamma}_{q'}|=O\left(\psi'(q)+\frac{\omega(q)\psi'(q)}{(\log\log q)^{3}}F(q)\right)+4(1+C_0/(2H))\psi'(q)\sum_{1\leq q'< q} \psi'(q'),
		\]
		where $\omega(q)=C_1/\sigma(q).$  If $\psi(q)=O(q^{-1}(\log q)^{-1} (\log\log q)^{1/2}),$ then as in above (with $\omega(q)=C_1/(\log\log q)^{1/2}$) we have
		\[
		\sum_{1\leq q'<q} |A^{\psi',\gamma}_q\cap A^{\psi',\gamma}_{q'}|=O\left(\psi'(q)+\psi'(q)F(q)\right)+4(1+C_0/(2H))\psi'(q)\sum_{1\leq q'< q} \psi'(q').
		\]
	\end{lemma}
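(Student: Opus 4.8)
The plan is to reproduce the argument that establishes Lemma~\ref{Lemma}, while keeping explicit track of how the estimates deteriorate as the Diophantine exponent of $(\gamma,\beta)$ up to a given height grows with the height, and then to tune the truncation function $\omega(\cdot)$ so that this deterioration is reabsorbed. Start from Lemma~\ref{master}, applied to $\psi'=\psi'_{\beta,\gamma',\omega(\cdot)}$: splitting $\sum_{1\le q'<q}|A^{\psi',\gamma}_q\cap A^{\psi',\gamma}_{q'}|$ according to whether $q\psi'(q')+q'\psi'(q)\ge H\gcd(q,q')$ or not, the far-diagonal range contributes at most $4(1+C_0/(2H))\,\psi'(q)\sum_{1\le q'<q}\psi'(q')$, already the last term in the claimed estimate. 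For the near-diagonal range write $d=\gcd(q,q')$, $q=dm$, $q'=dm'$ with $1\le m'<m$, $\gcd(m,m')=1$; Lemma~\ref{master} bounds its contribution by $O_H(1)$ times
\[
\sum_{d\mid q}\ \sum_{\substack{1\le m'<m\\ \gcd(m,m')=1}}\min\Bigl\{\tfrac{\psi'(q)}{m},\tfrac{\psi'(q')}{m'}\Bigr\}\,I_{w}\bigl(\gamma(m-m')\bigr),\qquad w=m\psi'(q')+m'\psi'(q).
\]

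Next I would flatten $\psi'$. Since $\psi'$ is supported on those $q'$ with $\|q'\beta-\gamma'\|\in[q'^{-\omega(q')},1]$, decompose this annulus dyadically: for $0\le j\lesssim \omega(q')\log q'$ restrict to the $q'$ with $\|q'\beta-\gamma'\|\in[2^{-j-1},2^{-j}]$, on which $\psi'(q')\asymp 2^{j}\psi(q')$ and the hypothesis on $\psi$ yields a clean pointwise bound for $\psi'(q')$. On such a piece the defining condition is a constraint on $m'\beta\bmod 1$, while $I_{w}(\gamma(m-m'))$ is a constraint on $m'\gamma\bmod 1$, so the inner sum runs over $m'<m$ with $(m'\gamma,m'\beta)\bmod\mathbb{Z}^2$ in a box of sidelengths $\asymp w$ and $\asymp 2^{-j}$. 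To count these $m'$ I would apply Lemma~\ref{Discrepancy bound} to $(\gamma,\beta)$, using $D_{M_1\gamma,M_2\beta}(Q)\le M_1M_2 D_{\gamma,\beta}(Q)$ to absorb the dilation by $d$ and the splitting of the box into $O(d)$ arcs; under $\sigma(q)=O((\log\log q)^{1/2})$ the discrepancy up to height $Q$ is $O(Q^{1-1/\sigma(Q)+o(1)})$, so the count of admissible $m'$ equals $m\cdot(\text{measure of box})$ plus an error $O(d\,(dm)^{1-1/\sigma(dm)+o(1)})$.

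Assembling the main terms exactly as for Lemma~\ref{Lemma}: the amplitude $2^{j}$ cancels against the measure $\asymp 2^{-j}$ of the $\beta$-slab, so each dyadic level contributes the same order; summing over the $\asymp\omega(q')\log q'$ levels and then over $d\mid q$ — the logarithmic weight built into Lemma~\ref{master} producing $F(q)$, and the $\log q'$ from the number of levels cancelling the $\log q'$ in $\psi(q')$ — gives the term $\omega(q)\psi'(q)F(q)/(\log\log q)^{3}$, with the $m'\psi'(q)$ half of $w$ and all boundary contributions swept into $O(\psi'(q))$. The second assertion is the case $\sigma(q)\asymp(\log\log q)^{1/2}$, $\omega(q)=C_1/(\log\log q)^{1/2}$, where the balance $\omega(q)\asymp(\log\log q)^{-1/2}$ makes the powers of $\log\log q$ cancel against the weaker hypothesis $\psi(q)=O(q^{-1}(\log q)^{-1}(\log\log q)^{1/2})$ inside the main-term computation, leaving $O(\psi'(q)+\psi'(q)F(q))$.

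The step I expect to be the crux is the control of the accumulated discrepancy error: one must sum the errors $O(d\,(dm)^{1-1/\sigma(dm)+o(1)})$ over all $\asymp\omega(q)\log q$ dyadic levels and all $d\mid q$, weighted by $\min\{\psi'(q)/m,\psi'(q')/m'\}\le\psi'(q)/m$, and show the total is $O(\psi'(q))$. On the $j$-th level the weight carries $\psi'(q')\lesssim 2^{j}\psi(q')$ with $2^{j}\le q^{\omega(q)}$, so the level sum costs a factor $q^{\omega(q)}$, which must be beaten by the saving $q^{-1/\sigma(q)}$; this forces $\omega(q)<1/\sigma(q)$, and is exactly why one sets $\omega(q)=C_1/\sigma(q)$ with $C_1>0$ a small absolute constant — small enough that the $o(1)$ in the discrepancy exponent, the divisor sum, and the factor $d$ are all absorbed uniformly in $q$ (and in $\gamma,\beta,\gamma'$). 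Extracting such a uniform $C_1$, rather than one depending on $q$, is the delicate point, and it is where the hypothesis $\sigma(q)=O((\log\log q)^{1/2})$ enters.
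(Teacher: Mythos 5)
Your plan follows the same route as the paper: re-run the proof of Lemma~\ref{Lemma} while letting the Diophantine/discrepancy exponents depend on the height, then choose $\omega(\cdot)$ small enough to make the accumulated discrepancy errors negligible and large enough to preserve the main-term bookkeeping. The dyadic decomposition in $\|q'\beta-\gamma'\|$, the role of the divisor sum producing $F(q)$, and the observation that $\omega(q)$ must beat $1/\sigma(q)$ are all the right ingredients; the balance you describe for the second assertion ($\omega(q)\asymp(\log\log q)^{-1/2}$ cancelling the weakened hypothesis on $\psi$) is also the paper's mechanism.

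The one genuine gap is the source of the discrepancy bound. You invoke Lemma~\ref{Discrepancy bound}, but that statement presupposes a \emph{fixed} Diophantine exponent $\rho$ for $(\gamma,\beta)$ and gives a clean $O(Q^{1-1/\rho+o(1)})$ with implied constants that are not uniform in $\rho$. Here $\sigma(q)$ grows, so that lemma simply does not apply as stated, and the hand-written conclusion ``the discrepancy up to height $Q$ is $O(Q^{1-1/\sigma(Q)+o(1)})$'' needs justification. The paper derives exactly this effective, height-dependent estimate — see (\ref{R1}), (\ref{R2a}), (\ref{R2b}) — from the Erd\H{o}s--Tur\'an--Koksma inequality (Theorem~\ref{ETK}), which gives explicit dependence on the height-$N$ Diophantine exponent $\sigma(N)$. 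Without some quantitative inequality of this type, there is no way to track how the implied constants and the $o(1)$ behave as $\sigma(q)\to\infty$, which is precisely the content of the lemma.

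Two smaller points. First, the constraint on $\omega(\cdot)$ is two-sided: in addition to $\omega(q)\lesssim 1/\sigma(q)$ (your condition, coming from making the pair-discrepancy error sum to $o(\psi'(q))$), one needs a lower bound $\omega(q)\gg 1/\log\log q$ to control the step analogous to (\ref{II}), where $d(q)(\log q)^2 q^{-\omega(q)}$ must be $o(1)$; your choice $\omega(q)=C_1/\sigma(q)$ satisfies both because $\sigma(q)=O((\log\log q)^{1/2})$, but the lower-bound requirement should be flagged explicitly since it is one of the two places where the hypothesis on $\sigma$ is actually used. Second, the argument splits the divisor range into $r\geq q^{2\omega(q)}$ (handled with the one-dimensional discrepancy/non-Liouville bound for $\gamma$ alone, cf. (\ref{E2}), (\ref{C2})) and $r<q^{2\omega(q)}$ (where the pair bound (\ref{E1}) is needed); your sketch treats the whole range uniformly with the pair estimate, which would make the accumulated error $r\,D_{\gamma,\beta}(\cdot)$ too large when $r$ is close to $q$. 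This dichotomy is not cosmetic — it is what makes the error-term computation close, and it is also the reason the other place the hypothesis $\sigma(q)=O((\log\log q)^{1/2})$ is genuinely needed is the divisor-function estimate behind (\ref{Ic}).
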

	\begin{remark}
		The second part of this lemma will be very useful in  the case when we know that $\psi'$ is supported on where $F(q)=\sum_{r|q} \log r/r$ is uniformly bounded from above. 
	\end{remark}
	\subsection{A counting lemma with Diophantine parameters: proof of Lemma \ref{Lemma}}
	First, we will deal with the case when $(\gamma,\beta)$ is a Diophantine pair. All the main ideas will be included in this special case. Later on, we will add extra technical arguments to weaken this Diophantine condition. \footnote{Of course, the integrity of this manuscript suffers from this non-linear proving style. The reader may need to constantly re-read this section in order to be able to finish the later proofs. We have carefully set up the breakpoints inside the proof such that the reader can conveniently come back and re-read what will be needed.}In this way, those extra arguments will not block our view too much, at least for now. Weakening the Diophantine condition is not for free. We shall see later that the less Diophantine condition we ask for $(\gamma,\beta),$ the less information we can use from $\psi.$ 
	
	\begin{proof}[Proof of Lemma \ref{Lemma}]
		Without loss of generality we assume that $q\geq 1024.$ We also assume that $\psi(q)\leq q^{-1}(\log q)^{-1}(\log\log q)^{-1}.$ Let $\omega>0$ be a number that will be determined later. In the proof, we will need to introduce a constant $C>0$ whose value will be updated throughout the proof. Consider $\psi'=\psi_{\beta,\gamma'\omega}.$ In what follows, we will use $\psi'$ as the default approximation function and we write $A$ for $A^{\psi',\gamma}$ and $\Delta$ for $\Delta_{\psi'}.$ By Lemma \ref{master}, for each integer $H>2$, if $\Delta(q,q')<H\gcd(q,q')$ then we have
		\[
		|A_q\cap A_{q'}|\leq 2(2H+1)\min\{\psi'(q)/q,\psi'(q')/q'\}\gcd(q,q')I_{\Delta(q,q')/\gcd(q,q')}(\{\gamma(q'-q)/\gcd(q,q')\})
		\]
		otherwise, for a $C_0>0,$
		\[
		|A_q\cap A_{q'}|\leq 4(1+C_0/(2H))\psi'(q)\psi'(q').
		\]
		Now we wish to estimate the sum
		\begin{align}
		\sum_{\substack{1\leq q'<q\\\|q'\beta-\gamma'\|\in [{q'}^{-\omega},1)}} \min\left\{\frac{\psi'(q')}{q'},\frac{\psi'(q)}{q}\right\}\gcd(q',q)I_{\Delta(q,q')/\gcd(q,q')}(\{\gamma(q'-q)/\gcd(q,q')\})\label{SUM}
		\end{align}
		There are now two different 'rotations' hidden in the above sum. The first one is inside the term
		\[
		\{\gamma (q'-q)/\gcd(q,q')\}
		\]
		and the second one is inside the value of $\psi'$ which depends heavily on $\|q\beta-\gamma'\|.$ We will use both the 'rotations' in the following argument.
		
		Before the main part of the proof, let us observe some simple facts. First, we see that for each $1>\rho_1>0,$
		\begin{align}
		\sum_{1\leq q'<q^{1-\rho_1}} \frac{\psi'(q)}{q}\gcd(q',q)I_{\Delta(q,q')/\gcd(q,q')}(\{\gamma (q'-q)/\gcd(q,q')\})\leq \psi'(q)\frac{q^{1-\rho_1}}{q}d(q)=\psi'(q)o(1).\label{1}
		\end{align}
		Next observe that
		\begin{align}
		&\sum_{1\leq q'<q} \frac{\psi'(q)}{q}\gcd(q',q)I_{\Delta(q,q')/\gcd(q,q')}(\{\gamma (q'-q)/\gcd(q,q')\})\\
		=&\frac{\psi'({q})}{q}\sum_{r|q} r \sum_{q': \gcd(q',q)=r}I_{\Delta(q,q')/r}(\{\gamma (q'-q)/r\}).\label{DIVISOR SUM}
		\end{align}
		Then we see that for each $\rho_2>1,$
		\begin{align}
		\sum_{r|q}r\sum_{\substack{q':\gcd(q',q)=r\\q'\leq q/r^{\rho_2}}}I_{\Delta(q,q')/r}(\{\gamma (q'-q)/r\})\leq \sum_{r|q}r \frac{q}{r^{1+\rho_2}}\leq \zeta(\rho_2)q.\label{2}
		\end{align}
		Therefore, to estimate (\ref{SUM}), it is enough to reduce the range of the sum to $q'\geq q^{1-\rho_1}$ as well as $q'\geq q/r^{\rho_2}$ for each divisor $r|q$ in (\ref{DIVISOR SUM})\footnote{For convenience, we record here the condition that $q'\geq \max\{q^{1-\rho_1},q/r^{\rho_2}\}$ for fixed values $1>\rho_1>0,\rho_2>1.$}. In what follows, those conditions are always implied. Those conditions imply in particularly that for large enough $q>0,$
		\[
		(\log\log q')^2\geq \frac{1}{2}(\log\log q)^2
		\]
		and
		\[
		\log q'\geq (1-\rho_1)\log q.
		\]
		So far, the above arguments do not rely on any Diophantine conditions for $(\gamma,\beta).$ We now split the expression (\ref{SUM}) according to whether $\psi'(q')q\geq \psi'(q)q'$ or $\psi'(q')q\leq \psi'(q)q'$. We consider them separately with similar arguments. There, we will see how the Diophantine condition for $(\gamma,\beta)$ is used.
		\subsubsection*{Case I: $\psi'(q')q\geq \psi'(q)q'$}
		We first deal with summand for which $\psi'(q')q\geq \psi'(q)q'.$ This implies that 
		\[
		\min\left\{\frac{\psi'(q')}{q'},\frac{\psi'(q)}{q}\right\}=\frac{\psi'(q)}{q}.
		\]
		In what follows, we will not explicitly write down this condition. For concreteness, we choose $\rho_1=1/2$ and $\rho_2=2$ in this case. Let $l\geq 0$ be an integer and we consider the set
		\[
		G_\omega^l=\{q'\geq 1, q'\in\mathbb{N}:\|q'\beta-\gamma'\|\in [2^l/{q'}^{\omega},2^{l+1}/{q'}^{\omega}]\}.
		\]
		Since $\omega$ is fixed throughout the proof, we simply write $G^l=G^l_\omega.$
		
		Now observe that (decompose $\mathbb{N}$ into sets $G^l$ with various $l$)
		\begin{align*}
		\sum_{\substack{1\leq q'<q\\\|q'\beta-\gamma'\|\in [{q'}^{-\omega},1)}} \frac{\psi'(q)}{q}\gcd(q',q)I_{\Delta(q,q')/\gcd(q,q')}(\{\gamma (q'-q)/\gcd(q,q')\})\\
		\leq \frac{\psi'(q)}{q}\sum^{l\leq \omega \log q}_{l= 0}\sum_{r|q}r \sum_{\substack{q': \gcd(q',q)=r\\q'\in G^l}}I_{\Delta(q',q)/r}(\gamma(q'-q)/r).
		\end{align*}
		For each integer $k\geq 0$ we introduce the following set
		\[
		D_k(q)=\{q'\geq 1, q'\in\mathbb{N}: q'\in [q/2^{k+1},q/2^k]\}.
		\]
		Recall that we implicitly have  the condition $q'\geq q^{1/2}$ and this will make $2^k$ to be at most $\sqrt{q}.$\footnote{We have $2^k\leq q^{1/2}.$ We also have $q/2^k\geq q/r^2,$ which forces $2^k\leq r^2.$ In general, if we do not specify the values of $\rho_1,\rho_2,$ we have $2^k\leq \min\{q^{\rho_1}, r^{\rho_2}\}.$}  Then we see that
		\begin{align*}
		\sum_{\substack{1\leq q'<q\\\|q'\beta-\gamma'\|\in [{q'}^{-\omega},1)}} \frac{\psi'(q)}{q}\gcd(q',q)I_{\Delta(q,q')/\gcd(q,q')}(\{\gamma (q'-q)/\gcd(q,q')\})\\
		\leq  \frac{\psi'(q)}{q}\sum^{l\leq \omega \log q}_{l\geq 0}\sum_{r|q}r \sum_{k:1\leq 2^k\leq q^{1/2}}\sum_{\substack{q':\gcd(q',q)=r\\q'\in G^l\\q'\in D_k(q)}}I_{\Delta(q',q)/r}(\gamma(q'-q)/r).
		\end{align*}
		If $q'\in G^l\cap D_k(q)$, then we have (recall that $(\log\log q')^2\geq (\log\log q)^2/2, \log q'\geq \log q/2$, $\psi'(q')q\geq \psi'(q)q'$ and $\psi(q)\leq q^{-1}(\log q)^{-1}(\log\log q)^{-2}$)
		\[
		\Delta(q',q)\leq 2\psi'(q')q\leq 2\frac{2^{k+1}}{\log q'(\log\log q')^2} \frac{q^{\omega}}{2^l 2^{\omega k}}\leq 8\frac{2^{k+1}}{\log q(\log\log q)^2} \frac{q^{\omega}}{2^l2^{\omega k}}.
		\]
		For each integer pair $k,l,$ we wish to estimate the sum
		\begin{align}
		&S_{k,l,r}(q) =\sum_{\substack{q': \gcd(q',q)=r\\q'\in G^l\\q'\in D_k(q)}}I_{\Delta(q',q)/r}(\gamma(q'-q)/r)&\nonumber\\
		&\leq \#\{1\leq s\leq q/r-1: sr\in G^l\cap D_k(q), \{\gamma(s-q/r)\}\in B(0,2^{k+4-l-\omega k}q^{\omega}/r\log q(\log\log q)^2)\}&\nonumber\\
		&\leq \#\{1\leq s\leq q/r-1: sr\in D_k(q), \|sr\beta-\gamma'\|\in [2^{l+\omega k}/q^{\omega},2^{l+1+\omega(k+1)}/q^{\omega}],\nonumber&\\
		&\{\gamma(s-q/r)\}\in B(0,2^{k+4-l-\omega k}q^{\omega}/r\log q(\log\log q)^2)\}.&\label{R} \end{align}
		
		If $q/(2^kr)<1$ then no non-zero multiple of $r$ is inside $D_k(q).$ If $q/(2^kr)\geq 1,$ then there are at most $3q/(2^kr)$ many possible multiples of $r$ in $D_k(q).$ We wish to use Lemma \ref{Discrepancy bound} to estimate $S_{k,l,r}(q).$ From our assumption, $(\gamma,\beta)$ is a Diophantine pair. Thus, there are numbers $\epsilon, C>0,$ such that
		\begin{align}
		S_{k,l,r}(q)\leq 8\frac{2^{k+1}}{\log q(\log\log q)^2} \frac{q^{\omega}}{2^l2^{\omega k}}\times\frac{1}{r} \times \frac{2^{l+1}2^{(k+1)\omega}}{q^{\omega}}\frac{3q}{2^k r}+C r \left(\frac{3q}{2^k r}\right)^{\epsilon}.\label{E1}
		\end{align}
		The factor $r$ in the above discrepancy error term comes from the fact that we are analysing irrational rotation with $(r\beta,\gamma)$ rather than $(\beta,\gamma).$ Now, if we ignore the condition that $q'\in G^l$ in the sum, then we have the upper bound
		\[
		S_{k,l,r}(q)\leq \#\{1\leq s\leq q/r-1: sr\in D_k(q), \{\gamma(s-q/r)\}\in B(0,2^{k+4-l-\omega k}q^{\omega}/r\log q(\log\log q)^2)\}.
		\]
		Again, Lemma \ref{Discrepancy bound} for irrational rotation with parameter $\gamma$ gives us a positive number $\epsilon'<1$ with
		\begin{align}
		S_{k,l,r}(q)\leq 8\frac{2^{k+1}}{\log q(\log\log q)^2} \frac{q^{\omega}}{2^l 2^{\omega k}}\frac{1}{r} \frac{3q}{2^k r}+C\left(\frac{3q}{2^k r}\right)^{\epsilon'}.\label{E2}
		\end{align}
		Finally, as $\gamma$ is irrational and not Liouville, there is a number $\alpha>1$ such that $S_{k,l,r}(q)=0$ whenever
		\begin{align}
		8\frac{2^{k+1}}{r\log q(\log\log q)^2} \frac{q^{\omega}}{2^l2^{\omega k}}\leq \left(\frac{q}{r}\right)^{-\alpha}. \label{C2}
		\end{align}
		To see this, observe that as $s$ ranging over $\{1,\dots,q/r-1\}$, the value of $\{\gamma(s-q/r)\}$ ranges over $\{\gamma\},\{2\gamma\},\dots,\{\gamma(q/r-1)\}.$ As $\gamma$ is irrational and not Liouville, we see that $\|n\gamma\|\geq n^{-\alpha}$ for a number $\alpha>1$ and all $n\geq 2.$ The estimate (\ref{E1}) provides in general more information than the estimate (\ref{E2}). However, the discrepancy error in (\ref{E1}) is in general larger than in (\ref{E2}). We will use (\ref{E1}) when $r$ is small compared to $q$ and (\ref{E2}) when $r$ is large.
		
		Now we need to consider the sum
		\[
		\frac{\psi'(q)}{q}\sum^{l\leq \omega \log q}_{l\geq 0}\sum_{r|q}r \sum_{k:1\leq 2^k\leq q^{1/2}}S_{k,l,r}(q).
		\]
		Let us first use the Estimate (\ref{E2}). In the sum, we only need to consider the case when Condition (\ref{C2}) is not satisfied, i.e. we have the following condition
		\begin{align}
		r<2^{\frac{k+4-l-\omega k}{1+\alpha}}\frac{1}{(\log q)^{1/(1+\alpha)}(\log\log q)^{2/(1+\alpha)}} q^{\frac{\omega+\alpha}{1+\alpha}}. \label{C2'}
		\end{align}
		The second term in (\ref{E2}) sums up to
		\begin{align}
		&\ll \frac{\psi'(q)}{q}\sum_{r|q}r\sum_{k,l} \left(\frac{3q}{2^k r}\right)^{\epsilon'}&\nonumber\\
		&\ll \psi'(q)(\log q)^2\sum_{r|q} \left(\frac{r}{q}\right)^{1-\epsilon'}=\psi'(q)o(1)&\label{I}
		\end{align}
		For the last line, we have used the condition (\ref{C2'}) and the fact that the number of divisors function $d(q)=o(q^{\delta})$ for all $\delta>0$. For the first term in (\ref{E2}), it sums up to
		\begin{align*}
		&\ll \frac{\psi'(q)}{q}\sum_{r|q}r\sum_{k,l}\frac{q^\omega q} {r^2}&\\
		&\ll \psi'(q)q^\omega (\log q)^2\sum_{r|q}\frac{1}{r}.&
		\end{align*}
		Thus, if we impose the condition $r\geq q^{2\omega}$ in the sum, we see that
		\begin{align}
		\psi'(q)q^\omega (\log q)^2\sum_{\substack{r|q\\r\geq q^{2\omega}}}\frac{1}{r}\leq \psi'(q)q^\omega (\log q)^2 d(q)q^{-2\omega}=\psi'(q)o(1).\label{II}
		\end{align}
		Now, we are left with the terms for which $r<q^{2\omega}.$ We will use the Estimate (\ref{E1}). First, we deal with the error term (the second term), which sums up to
		\begin{align*}
		&\ll \frac{\psi'(q)}{q}\sum_{\substack{r|q\\r<q^{2\omega}}}r\sum_{k,l}r\left(\frac{3q}{2^k r}\right)^{\epsilon}&\\
		&\ll \psi'(q)(\log q)\sum_{\substack{r|q\\r<q^{2\omega}}}\frac{r^{2-\epsilon}}{q^{1-\epsilon}}&\\
		&\ll \psi'(q)(\log q) d(q) q^{2\omega(2-\epsilon)-(1-\epsilon)}.&
		\end{align*}
		We can choose $\omega$ to be small enough so that $2\omega(2-\epsilon)-(1-\epsilon)<0.$\footnote{Here is the first condition on $\omega$.} Then we have
		\begin{align}
		\psi'(q)(\log q) d(q) q^{2\omega(2-\epsilon)-(1-\epsilon)}=\psi'(q)o(1).\label{III}
		\end{align}
		The last step is to add up the contributions for the first term of (\ref{E1}),
		\begin{align}
		\frac{\psi'(q)}{q}\sum^{l\leq \omega \log q}_{l\geq 0}\sum_{\substack{r|q\\ r<q^{2\omega}}}r \sum_{k:1\leq 2^k\leq q^{1/2}}8\frac{2^{k+1}}{\log q(\log\log q)^2} \frac{q^{\omega}}{2^l2^{\omega k}}\times\frac{1}{r} \times \frac{2^{l+1}2^{(k+1)\omega}}{q^{\omega}}\frac{3q}{2^k r}\nonumber\\
		\leq C\frac{\psi'(q)}{\log q(\log\log q)^2} \log q \sum_{r|q} \frac{\log r}{r}\nonumber\\
		=C\frac{\psi'(q)}{(\log\log q)^2} \sum_{r|q} \frac{\log r}{r},\label{IV}
		\end{align}
		where $C>0$ is a positive constant. Now we combine (\ref{I}),(\ref{II}),(\ref{III}),(\ref{IV}) to obtain (update the value for $C$ if necessary)
		\begin{align*}
		\sum_{\substack{1\leq q'<q\\\|q'\beta-\gamma'\|\in [{q'}^{-\omega},1)}} \frac{\psi'(q)}{q}\gcd(q',q)I_{\Delta(q,q')/\gcd(q,q')}(\{\gamma (q'-q)/\gcd(q,q')\})\\
		\leq C\left(\psi'(q)+\frac{\psi'(q)}{(\log\log q)^2}\sum_{r|q} \frac{\log r}{r}\right).
		\end{align*}
		This is the estimate for the terms in (\ref{SUM}) with $\psi'(q')q\geq \psi'(q)q'$.
		\subsection*{Case II: $\psi'(q')q\leq \psi'(q)q'$}
		In this case, we have
		\[
		\min\left\{\frac{\psi'(q')}{q'},\frac{\psi'(q)}{q}\right\}=\frac{\psi'(q')}{q'}.
		\]
		As in the first case, we are dealing with the following sum
		\begin{align*}
		\sum_{\substack{1\leq q'<q\\\|q'\beta-\gamma'\|\in [{q'}^{-\omega},1)}} \frac{\psi'(q')}{q'}\gcd(q',q)I_{\Delta(q,q')/\gcd(q,q')}(\{\gamma (q'-q)/\gcd(q,q')\})\\
		\leq \sum^{l\leq \omega \log q}_{l\geq 0}\sum_{r|q}r \sum_{k:1\leq 2^k\leq q^{1/2}}\sum_{\substack{q': \gcd(q',q)=r\\q'\in G^l\cap  D_k(q)}}\frac{\psi'(q')}{q'}I_{\Delta(q',q)/r}(\gamma(q'-q)/r).
		\end{align*}
		Observe that for $q'\in G^l\cap D_k(q),$ 
		\[
		\frac{\psi'(q')}{q'}\leq T_{k,l}=\min\left\{8\times \frac{2^{2(k+1)}}{q^2}\frac{q^\omega}{2^{\omega k}} \frac{1}{2^l}\frac{1}{\log q (\log\log q)^2}, \frac{\psi'(q)}{q}\right\}.
		\]
		Therefore we have
		\begin{align*}
		\sum_{\substack{q': \gcd(q',q)=r\\q'\in G^l\cap D_k(q)}}\frac{\psi'(q')}{q'}I_{\Delta(q',q)/r}(\gamma(q'-q)/r)\\
		\leq T_{k,l}\sum_{\substack{q': \gcd(q',q)=r\\q'\in G^l\cap D_k(q)}}I_{\Delta(q',q)/r}(\gamma(q'-q)/r).
		\end{align*}
		As we assumed for this case, we have
		\[
		\Delta(q',q)\leq 2 \psi'(q)q'.
		\]
		Then for $q'\in G^l\cap D_k(q),$ we have
		\[
		\Delta(q',q)\leq 2 \psi'(q) \frac{q}{2^k}.
		\]
		The following steps will be very similar to those in (Case I). Again, we need to estimate 
		\begin{align*}
		S_{k,l,r}(q)=\sum_{\substack{q': \gcd(q',q)=r\\q'\in G^l\cap D_k(q)}}I_{\Delta(q',q)/r}(\gamma(q'-q)/r)\\
		\end{align*}
		Again, as $\gamma$ is not Liouville, we have $S_{k,l,r}=0$ if
		\[
		2\psi'(q)\frac{q}{2^kr}\leq \left(\frac{q}{r}\right)^{-\alpha}.
		\]
		Therefore, we can assume that (negating the above)
		\begin{align}
		r^{1+\alpha}< 2\psi'(q)q^{\alpha+1}\frac{1}{2^k}\leq  \frac{2}{q\log q (\log\log q)^2} \times q^{\omega}\times q^{\alpha+1}\frac{1}{2^k}\leq 2q^{\alpha+\omega}.\label{C2''}
		\end{align}
		As $(\gamma,r\beta)$ is a Diophantine pair, we obtain the following estimate which is similar to (\ref{E1}),
		\begin{align}
		S_{k,l,r}(q)\leq \frac{2\psi'(q)q}{2^kr} \frac{2^{l+1+\omega (k+1)}}{q^\omega} \frac{3q}{2^k r}+C r\left(\frac{q}{2^k r}\right)^{\epsilon}.\label{E1'}
		\end{align}
		Similarly, we also have the following analogue of (\ref{E2}),
		\begin{align}
		S_{k,l,r}(q)\leq \frac{2\psi'(q)q}{2^k r}\frac{3q}{2^k r}+C \left(\frac{q}{2^k r}\right)^{\epsilon}.\label{E2'}
		\end{align}
		Our aim is to estimate
		\[
		\sum_{r|q}r\sum_{k,l} T_{k,l} S_{k,l,r}(q).
		\]
		As in (Case I), we need to switch between (\ref{E1'})(\ref{E2'}) according to the value of $r.$ First, let us observe that the error term (second term) in (\ref{E2'}) creates no significant contribution (with the condition (\ref{C2''})),
		\begin{align*}
		\sum_{r|q}r\sum_{k,l}T_{k,l} \left(\frac{q}{2^k r}\right)^{\epsilon}\ll \psi'(q)(\log q)^2\sum_{r|q} \left(\frac{r}{q}\right)^{1-\epsilon}\ll \psi'(q)(\log q)^2d(q) q^{-\rho}=\psi'(q)o(1),
		\end{align*}
		where $\rho>0$ is a positive number depending on $\alpha,\omega,\epsilon.$ Let us now consider the first term in (\ref{E2'}),
		\begin{align*}
		\sum_{r|q}r\sum_{k,l}T_{k,l}\frac{2\psi'(q)q}{2^k r}\frac{3q}{2^k r}\ll \psi'(q)q^\omega (\log q)^2\sum_{r|q}\frac{1}{r}.
		\end{align*}
	We will choose to use (\ref{E2'}) only when $r\geq q^{2\omega}.$ In this way, the above is again
		\[
		\psi'(q)o(1).
		\]
		We have thus showed that under the condition \ref{C2''}) and $r\geq q^{2\omega},$ 
		\begin{align}
		\sum_{\substack{r|q\\ r\geq q^{2\omega}}}r\sum_{k,l} T_{k,l} S_{k,l,r}(q)=\psi'(q)o(1).\label{I'}
		\end{align}
		From now on, we will assume $r<q^{2\omega}.$ Next, we see that the error term of (\ref{E1'}) (the second term above) sums up to
		\begin{align}
		\ll \sum_{\substack{r|q\\ r<q^{2\omega}}} r \sum_{k,l} T_{k,l} \times  r\left(\frac{q}{2^k r}\right)^{\epsilon}\nonumber\\
		\ll \psi'(q)(\log q)^2\sum_{\substack{r|q\\ r<q^{2\omega}}} \frac{r^2}{q}\left(\frac{q}{2^k r}\right)^{\epsilon}\nonumber\\
		\ll \psi'(q)(\log q)^2d(q)q^{-E},\label{II'}
		\end{align}
		where the exponent $E$ is (recall that $r<q^{2\omega}$)
		\[
		E=1-\epsilon-2\omega(2-\epsilon).
		\]
		Now $\epsilon$ is a fixed positive number. We have met this condition just above the Estimate (\ref{III}). Our freedom is to choose $\omega\in (0,1).$ We need to choose $\omega$ to be sufficiently small so that $E>0.$\footnote{This is the second and the last condition on $\omega$.} This is certainly possible. Finally, the main term (the first term) of (\ref{E1'}) sums up to
		\begin{align*}
		\leq \sum_{\substack{r|q\\ r<q^{2\omega}}} r \sum_{k,l} 48\times \frac{2^{2(k+1)}}{q^2}\frac{q^\omega}{2^{\omega k}} \frac{1}{2^l}\frac{1}{\log q (\log\log q)^2} \times  \frac{\psi'(q)q}{2^kr} \frac{2^{l+1+\omega (k+1)}}{q^\omega} \frac{q}{2^k r}\\
		\leq \frac{1000\psi'(q)}{\log q(\log\log q)^2} \sum_{r|q}r\sum_{k,l} \frac{1}{r^2}
		\end{align*}
		The sum of $k,l$ gives a $O(\log r\log q)$ factor, where the implied constant is absolute. Thus, we see that for a constant $C>0,$
		\begin{align}
		\frac{1000\psi'(q)}{\log q(\log\log q)^2} \sum_{r|q}r\sum_{k,l} \frac{1}{r^2}\leq \frac{C\psi'(q)}{(\log\log q)^2} \sum_{r|q} \frac{\log r}{r}.\label{III'}
		\end{align}
		We can combine (\ref{I'}), (\ref{II'}), (\ref{III'}) and obtain the required estimate for (\ref{SUM}) in (Case II). From here, the proof is finished.
	\end{proof}
	
	\subsection{Weakening the Diophantine condition: proof of Lemma \ref{Lemma3}}\label{W}
	In the above proof, the Diophantine condition for $(\gamma,\beta)$ is needed in the following way. First, we need $\gamma$ to be not Liouville to deduce Estimates (\ref{E2}), (\ref{E2'}). Next, together with Condition (\ref{C2'}) we deduced that the sum for $r\geq q^{2\omega}$ gives $\psi'(q)o(1).$ To deal with the sum for $r<q^{2\omega},$ we need to use the condition that $(\gamma,\beta)$ is a Diophantine pair in order to have (\ref{E1}), (\ref{E1'}).
	
	There is still leeway for us. In order to see it, let us examine the Diophantine property of the pair $(\gamma,\beta)$ more precisely. Recall that $\sigma(.)$ is the function taking integer variables and positive values so that $\sigma(N)$ is the best Diophantine exponent for $(\gamma,\beta)$ up to height $N.$ That is to say, $\sigma(N)$ is the infimum of all numbers $\sigma>0$ such that for all $-N\leq k_1,k_2\leq N$ with $k_1,k_2$ not both zeros,
	\[
	\|k_1\gamma+k_2\beta\|\geq \max\{k_1,k_2\}^{-\sigma}
	\] 
	We recall the following result by Erd\H{o}s-Tur\'{a}n-Koksma, \cite[Theorem 1.21]{DT97}. This result holds for $d$-dimensional torus rotations with any $d\geq 1$. In this paper, we only use the case when $d=1$ or $2$.
	\begin{theorem}[ETK]\label{ETK}
		Let $\alpha$ be an irrational number. Let $H,N$ be positive integers. Then for each interval $I\subset [0,1]$,
		\[
		|E^{I}_{\alpha}(N)|\leq 9N \left(\frac{1}{H}+\sum_{0<|k|\leq H}\frac{2}{||k|+1|}\frac{2}{N\|k\alpha\|}\right).
		\]

		Let $(\alpha,\beta)$ be a pair of numbers. Let $H, N$ be positive integers. Then for each rectangle  $I\subset [0,1]^2$ whose sides are parallel to the coordinate axes,
		\[
		|E^{I}_{\alpha,\beta}(N)|\leq 9N\left(\frac{1}{H}+\sum'_{k_1,k_2}\frac{4}{(|k_1|+1)(|k_2|+1)}\frac{2}{N \|k_1\alpha+k_2\beta\|}\right),
		\]
		where $\sum'_{k_1,k_2}$ is the sum of pairs of integers $-H\leq k_1,k_2\leq H$ such that at least one of $k_1,k_2$ is not zero.
	\end{theorem}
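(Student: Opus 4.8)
The plan is to prove the inequality by the classical Fourier-analytic route: sandwich the indicator function of a box between trigonometric polynomials of degree at most $H$, and then bound the resulting linear exponential sums by a geometric series. Throughout write $e(t)=e^{2\pi i t}$. I would establish the one-dimensional bound first and deduce the two-dimensional one by a tensor-product construction.

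The first ingredient is a standard one-sided trigonometric approximation: for an interval $I=[a,b]\subseteq[0,1]$ with $|I|=\gamma$, there are trigonometric polynomials $F^{\pm}$ of degree at most $H$ with $F^{-}\le\chi_{I}\le F^{+}$ on $\mathbb{T}$, with constant Fourier coefficient $\widehat{F^{\pm}}(0)=\gamma\pm\frac{1}{H+1}$, and with $|\widehat{F^{\pm}}(k)|\le\frac{2}{|k|+1}$ for $0<|k|\le H$. Such $F^{\pm}$ are produced in the usual way, for instance by convolving $\chi_{I}$ with a Fej\'er kernel of order $H$ and then correcting the outcome upwards/downwards by an explicit error polynomial (equivalently, via the Vaaler / Beurling--Selberg polynomials); the additional $\frac{1}{H+1}$ in the constant coefficient is the price of making the approximation genuinely one-sided. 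I would cite this construction rather than reprove it, since it is entirely classical, and the crude constants it produces are precisely what the factor $9$ in the statement accommodates.

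Given $F^{\pm}$, the error term is sandwiched:
\[
\sum_{n=1}^{N}F^{-}(n\alpha)-N\gamma\ \le\ E^{I}_{\alpha}(N)\ \le\ \sum_{n=1}^{N}F^{+}(n\alpha)-N\gamma .
\]
Expanding $F^{\pm}$ into its Fourier series, the frequency-zero term contributes exactly $\pm N/(H+1)$, and the remaining terms give $\sum_{0<|k|\le H}\widehat{F^{\pm}}(k)\sum_{n=1}^{N}e(kn\alpha)$. Here I invoke the elementary bound $\bigl|\sum_{n=1}^{N}e(n\theta)\bigr|\le\min\{N,\tfrac{1}{2\|\theta\|}\}\le\tfrac{1}{2\|\theta\|}$, which follows from summing the geometric series together with the inequality $|\sin\pi\theta|\ge2\|\theta\|$. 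Combined with the coefficient bound and $1/(H+1)\le1/H$, this gives $|E^{I}_{\alpha}(N)|\le\frac{N}{H}+\sum_{0<|k|\le H}\frac{2}{|k|+1}\cdot\frac{1}{2\|k\alpha\|}$, which comfortably implies the one-dimensional case of the claimed inequality.

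For the two-dimensional statement I would use the product polynomials $G^{\pm}(x,y)=F_{1}^{\pm}(x)\,F_{2}^{\pm}(y)$ built from the side-intervals of the rectangle $I=I_{1}\times I_{2}$. Arranging the construction so that the majorant is nonnegative and the minorant is made nonnegative by clipping at $0$ at the cost of one further $\frac{1}{H+1}$, the product $G^{\pm}$ is a genuine majorant/minorant of $\chi_{I_{1}}(x)\chi_{I_{2}}(y)$; its Fourier coefficients factor, so $|\widehat{G^{\pm}}(k_{1},k_{2})|\le\frac{4}{(|k_{1}|+1)(|k_{2}|+1)}$ for $(k_{1},k_{2})\neq(0,0)$, and the exponential sum attached to the frequency $(k_{1},k_{2})$ along the sequence $n(\alpha,\beta)$ is $\sum_{n=1}^{N}e\bigl(n(k_{1}\alpha+k_{2}\beta)\bigr)$, which is bounded by $\tfrac{1}{2\|k_{1}\alpha+k_{2}\beta\|}$. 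Summing exactly as in the one-dimensional case yields the stated bound. The one step that needs genuine care --- and where the generous constant $9$ is actually spent --- is the bookkeeping of the several $\frac{1}{H+1}$ losses (smoothing, clipping, and the part of the $(0,0)$-coefficient of the two-dimensional kernel lying beyond $|I|$) together with the verification that the product construction remains two-sided; this is elementary but is the one place where a careless argument would fail to recover the clean form of the inequality.
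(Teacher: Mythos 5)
The paper does not prove this statement at all: Theorem \ref{ETK} is quoted directly from \cite[Theorem 1.21]{DT97}, so there is no internal proof to compare against. Your proposal is the classical Fourier-analytic proof of the Erd\H{o}s--Tur\'an(--Koksma) inequality, and the one-dimensional half is correct as written: Selberg/Vaaler majorants and minorants of degree $\le H$ with $\widehat{F^{\pm}}(0)=|I|\pm\tfrac{1}{H+1}$ and $|\widehat{F^{\pm}}(k)|\le\tfrac{2}{|k|+1}$, combined with $|\sum_{n\le N}e(kn\alpha)|\le\tfrac{1}{2\|k\alpha\|}$, give a bound well inside the stated one (the constant $9$ is very generous).

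The two-dimensional half, however, has a genuine gap exactly at the step you flag. The majorant $F_1^{+}F_2^{+}$ is fine because $F_i^{+}\ge\chi_{I_i}\ge 0$. But the product $F_1^{-}F_2^{-}$ is \emph{not} a minorant of $\chi_{I_1}\chi_{I_2}$: the $F_i^{-}$ take negative values off $I_i$, so their product can be strictly positive where the indicator vanishes. Your proposed repair --- ``clipping at $0$'' --- is not admissible, because $\max(F^{-},0)$ is no longer a trigonometric polynomial of degree $\le H$; once band-limitedness is lost the Fourier expansion no longer truncates at $|k_i|\le H$ and the whole argument collapses. The standard repair keeps everything band-limited: take
\[
G^{-}=F_1^{-}F_2^{+}+F_1^{+}F_2^{-}-F_1^{+}F_2^{+},
\]
and observe the identity
\[
\chi_{I_1}\chi_{I_2}-G^{-}=(\chi_{I_1}-F_1^{+})(\chi_{I_2}-F_2^{+})+(\chi_{I_1}-F_1^{-})F_2^{+}+F_1^{+}(\chi_{I_2}-F_2^{-}),
\]
in which each summand is nonnegative (the first is a product of two nonpositive factors; in the other two, one factor is nonnegative by the minorant property and the other satisfies $F_i^{+}\ge\chi_{I_i}\ge 0$). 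Since $|\widehat{F_i^{\pm}}(k)|\le\tfrac{2}{|k|+1}$ also at $k=0$, the nonzero Fourier coefficients of $G^{\pm}$ are at most $3\cdot\tfrac{2}{|k_1|+1}\cdot\tfrac{2}{|k_2|+1}$ and the constant coefficient differs from $|I_1||I_2|$ by $O(1/(H+1))$; summing as in dimension one then lands within the stated constants. With this substitution your argument goes through; as written, the minorant step fails.
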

	Here, the factor $4/(|k_1|+1)(|k_2|+1)$ can be replaced with
	\[
	1/\max\{1,|k_1|\}\max\{1,|k_2|\},
	\]
	which will only provides us with some marginal improvements. We apply Theorem \ref{ETK} for $(\gamma,\beta).$ We see that
	\begin{align*}
	|E^I_{\gamma,\beta}(N)|&\leq 9N\left(\frac{1}{H}+\sum'_{k_1,k_2}\frac{4}{(|k_1|+1)(|k_2|+1)}\frac{2}{N \|k_1\gamma+k_2\beta\|}\right)&\\
	&\leq 9N\left(\frac{1}{H}+\frac{8}{N}\sum'_{k_1,k_2}\frac{1}{(|k_1|+1)(|k_2|+1)}\frac{1}{ \max\{k_1,k_2\}^{-\sigma(N)}}\right)&\\
	&\leq 9N\left(\frac{1}{H}+\frac{8000\sigma(N)}{N}(\log H)H^{\sigma(N)}\right).&
	\end{align*}
	The factor $8000$ on the last line is by no means the optimal one. Now we see that for an absolute constant $C>0,$
	\[
	9N\left(\frac{1}{H}+\frac{8000\sigma(N)}{N}(\log H)H^{\sigma(N)}\right)\leq C N^{\sigma(N)/(\sigma(N)+1)}(\log N)^{1/(\sigma(N)+1)}
	\]
	if we choose $H$ to be the smallest positive integer with
	\[
	\frac{1}{H}\leq \frac{8000\sigma(N)}{N}(\log H)H^{\sigma(N)}.
	\]
	Thus, we have obtained the following discrepancy estimate for the irrational rotation with parameter $(\gamma,\beta),$
	\begin{align}
	|E^I_{\gamma,\beta}(N)|\leq C_1N^{\sigma(N)/(\sigma(N)+1)}(\log N)^{1/(\sigma(N)+1)}.\label{R1}
	\end{align}
	Recall that we also have the following condition for all $0\leq |k_1|,|k_2|\leq N$ except $k_1=k_2=0,$
	\[
	\|k_1\gamma+k_2\beta\|\geq \max\{k_1,k_2\}^{-\sigma(N)}.
	\]
	In particular, if we let $k_2=0,$ then we have for all $1\leq |k_1|\leq N,$
	\begin{align}
	\|k_1\gamma\|\geq k_1^{-\sigma(N)}.\label{R2a}
	\end{align}
	Then a further use of Theorem \ref{ETK} gives us that
	\begin{align}
	|E^I_{\gamma}(N)|\leq C_1 N^{\sigma(N)/(\sigma(N)+1)}.\label{R2b}
	\end{align}
	Of course, if $(\gamma,\beta)$ is a Diophantine pair, then $\sigma(.)$ is bounded. Then from the above (\ref{R1}), (\ref{R2a}), (\ref{R2b}) we can deduce (\ref{E1}), (\ref{E1'}), (\ref{E2}), (\ref{E2'}). Here is where the few extra rooms are. We now prove Lemma \ref{Lemma3}.

	\begin{proof}[Proof of Lemma \ref{Lemma3}]
		We include all the notations in the proof of Lemma \ref{Lemma}. We want to insert some changes to the arguments in order to deduce Lemma \ref{Lemma3}. Let us concentrate on (Case I). Observe that we still have Estimates (\ref{E1}), (\ref{E2}) and Condition (\ref{C2}). The difference here is that the exponents $\epsilon,\epsilon',\alpha$ are no longer constants. They need to be chosen according to $q.$ In fact, here we can choose
		\[
		\epsilon(q)=\epsilon'(q)=(1+o(1))\frac{\sigma(q)}{\sigma(q)+1}
		\]
		and
		\[
		\alpha(q)=\sigma(q).
		\]
		Here the $o(1)$ term is introduced to absorb the power of $\log N$ factor in (\ref{R1}) and can be made more explicit. Next, in order to have the Estimate (\ref{I}), it is sufficient to have
		\begin{align}
		(\log q)^2 d(q) \frac{1}{q^{\rho(q)}}=o(1)\label{Ic}
		\end{align}
		where 
		\[
		\rho(q)=\frac{0.5-\omega(q)}{(\sigma(q)+1)^2}.
		\]
		Again, we may fix a small enough $\omega(q)$ to start with. Observe that (the maximal order of the divisor function)\footnote{The use of the maximal order of the divisor function is an overkill. In most situations, one only need to use the averaged order. For example, if $\psi$ is supported on where $d(q)=O(\log q),$ then we can improve this estimate significantly.}
		\[
		\log d(q)\ll \log q/\log\log q.
		\]
		Thus we can achieve (\ref{Ic}) if
		\[
		(\log q)^2d(q)=o(q^{\rho(q)}).
		\]
		This can be achieved by requiring that
		\[
		\sigma(q)\leq C (0.5-\omega(q))^{1/2}(\log\log q)^{1/2}.
		\]
		where $C>0$ is an absolute constant. The above condition says that $q^{\rho(q)}$ is not too small.
		
		After re-establishing (\ref{I}), we need to consider (\ref{II}). For (\ref{II}) to hold, it is enough to have
		\[
		(\log q)^2 d(q)/q^\omega=o(1).
		\]
		This can be achieved by choosing $\omega$ to be $\gg 1/(\log \log q)$ where the implied constant is absolute. This says that $\omega$ cannot be too small. The critical issue occurs for Estimate (\ref{III}). We see that if $\epsilon$ is very close to one, then $\omega$ must be chosen to be very small because we should have
		\[
		2\omega(2-\epsilon)-(1-\epsilon)<0.
		\]
		Thus, we cannot maintain that $\omega$ is a fixed number. It must vary along $q$ as well! More precisely, we can choose
		\begin{align}
		\frac{C_1}{\log\log q}\leq\omega(q)\leq C_2/\sigma(q)\label{Omega}
		\end{align}
		where $C_1,C_2>0$ are constants. Since $\sigma(q)=O((\log\log q)^{1/2})$, the above requirement is possible to be achieved. Thus we have
		\[
		q^{2\omega(q)(2-\epsilon(q))-(1-\epsilon(q))}(\log q)^2 d(q)=o(1).
		\]
		In this way, we still have the Estimates (\ref{I}), (\ref{II}), (\ref{III}). The estimate (\ref{IV}) is the main term, we do not have further restriction to deduce it. However, observe that as we have already imposed the condition $r\leq q^{2\omega(q)}$, this restricts the range of $k$ in the sum because $2^k\leq r^2$. In return, assuming $\psi(q)=O(q^{-1}(\log q)^{-1}(\log\log q)^{-3}),$ we see that
		\begin{align}
		\frac{\psi'(q)}{q}\sum^{l\leq \omega(q) \log q}_{l\geq 0}\sum_{r|q}r \sum_{k:1\leq 2^k\leq q^{2\omega(q)}}8\frac{2^{k+1}}{\log q(\log\log q)^3} \frac{q^{\omega(q)}}{2^l2^{\omega(q) k}}\times\frac{1}{r} \times \frac{2^{l+1}2^{(k+1)\omega(q)}}{q^{\omega(q)}}\frac{3q}{2^k r}\nonumber\\
		\ll \frac{\psi'(q)\omega(q)}{(\log\log q)^{3}}\sum_{r|q}\frac{\log r}{r}. \label{IVc}
		\end{align}
		Similarly, the arguments in (Case II) can be reconsidered in exactly the same way. This finishes the proof.
	\end{proof}

	\section{Proofs of Theorems \ref{MAIN}, \ref{MAIN2}} \label{proofofmain}
	In this section, we will prove Theorems \ref{MAIN}, \ref{MAIN2}.
	
	\begin{proof}[Proof of Theorem \ref{MAIN}]
		First, the requirement that $(\gamma,\beta)$ is Diophantine implies that $\gamma,\beta$ are both irrational and non-Liouville numbers.
		Let $\omega$ be a small enough positive number which will be specified later. Recall the construction (\ref{psi}) for $\psi'=\psi'_{\beta,\gamma',\omega}.$ We see that
		\[
		W(\psi,\beta,\gamma,\gamma')\supset W(\psi',\gamma).
		\]
		Now we write $A$ for $A^{\psi',\gamma}.$ The proof now divides into two parts.
		\subsubsection*{Step 1: Restricting the support of $\psi'$}
		In this step, we show that it is possible to restrict the support of $\psi'$ a bit further (to a certain subset $A\subset\mathbb{N}$). The idea is that we have a upper bound condition $\psi(q)=O((q\log q(\log\log q)^2)^{-1}).$ Then we can use this upper bound to deduce another upper bound for $\psi'$ in such a way that 
		\[
		\sum_{q\in A^c}\psi'(q)<\infty.
		\]
		On the other hand, we also know that $\sum_{q=1}^\infty\psi'(q)=\infty.$ Thus we have
		\[
		\sum_{q\in A}\psi'(q)=\infty.
		\]
		This allows us to restrict the support of $\psi'$ to $A.$ We now supply the details.
		
		Recall that
		\[
		F(q)=\sum_{r|q} \frac{\log r}{r}.
		\]
		In this step, we show that it is possible to restrict $\psi'$ on integers $q$ with $F(q)=O((\log\log q)^2).$
		
		We analyse the values of $F(q)$ for $q$ with
		\[
		\|q\beta-\gamma'\|\in \left[\frac{1}{q^{\omega}},1\right].
		\]
		For each integer $l\geq 0,$ recall the set
		\[
		G^l=G_\omega^l=\left\{q:\|q\beta-\gamma'\|\in \left[\frac{2^l}{q^{\omega}},\frac{2^{l+1}}{q^{\omega}}\right]\right\}.
		\]
		Let $K>0$ be an integer and $Q>1000$ be another integer. Consider the following sum
		\begin{align*}
		\sum_{q\in G^l\cap [Q/2,Q]} F^K(q)=\sum_{r_1,\dots r_K\leq Q} \frac{\log r_1\log r_2\dots \log r_K}{r_1 r_2\dots r_K}\sum_{\substack{q: [r_1,\dots,r_K]|q\\q\in G^l\cap [Q/2,Q]}}1.
		\end{align*}
		By using Lemma \ref{Discrepancy bound} we see that
		\begin{align}
		\sum_{\substack{q: [r_1,\dots,r_K]|q\\q\in G^l\cap [Q/2,Q]}}1\leq \frac{Q}{[r_1,\dots,r_K]}\frac{2^{l+1+\omega}}{Q^{\omega}}+C[r_1,\dots,r_k] \left(\frac{Q}{[r_1,\dots,r_K]}\right)^{\epsilon}\label{E3}
		\end{align}
		For two constants $\epsilon,C>0$ which depend only on $\beta.$ Here the factor $[r_1,\dots,r_k]$ for the second term on the RHS is because we are considering the rotation with angle $\beta [r_1,\dots,r_k]$ rather than $\beta.$
		
		From now on, we pose the condition
		\begin{align}
		1-\omega-\epsilon>0. \label{C3}
		\end{align}
		This is always possible to achieve since $\epsilon<1.$ We will also need to apply Lemma \ref{Lemma}. From there, we also got an upper bound $\omega'_0$ for possible values of $\omega.$ Now we set $\omega_0$ to be the minimal between $(1-\epsilon)$ and $\omega'_0.$
		
		The first term on the RHS in (\ref{E3}) is relatively easy to handle and it will make the major contribution. We want to show that the second term would not contribute too much. Observe that if
		\[
		\frac{Q^{1-\omega}}{[r_1,\dots,r_K]}\geq  [r_1,\dots,r_k]\left(\frac{Q}{[r_1,\dots,r_K]}\right)^{\epsilon}
		\]
		then the RHS of (\ref{E3}) will be bounded from above by
		\[
		\left(1+\frac{C}{2^{l+1+\omega}}\right) \frac{Q}{[r_1,\dots,r_K]}\frac{2^{l+1+\omega}}{Q^{\omega}}\leq \left(1+C\right) \frac{Q}{[r_1,\dots,r_K]}\frac{2^{l+1+\omega}}{Q^{\omega}}.
		\]
		Otherwise, we have
		\begin{align}
		[r_1,\dots,r_K]\geq Q^{\frac{1-\omega-\epsilon}{2-\epsilon}}. \label{C4} 
		\end{align}
		We write $\epsilon'=\frac{1-\omega-\epsilon}{2-\epsilon}.$ Observe that
		\begin{align*}
		\sum_{\substack{r_1,\dots r_K\leq Q\\ [r_1,\dots,r_K]\geq Q^{\epsilon'}}} \frac{\log r_1\log r_2\dots \log r_K}{r_1 r_2\dots r_K}\sum_{\substack{q: [r_1,\dots,r_K]|q\\q\in G^l\cap [Q/2,Q]}}1\\
		\leq \frac{\log^K Q}{Q^{\epsilon'}} \sum_{r_1,\dots,r_K\leq Q} \sum_{\substack{q: [r_1,\dots,r_K]|q\\q\in G^l\cap [Q/2,Q]}}1\\
		=\frac{\log^K Q}{Q^{\epsilon'}}\sum_{q\in G^l\cap [Q/2,Q]} d^K(q).
		\end{align*}
		There is a constant $C_1>0$ such that for all $q\geq 1,$
		\[
		d(q)\leq 2^{C_1\log q/\log\log q}.
		\]
		Thus we see that
		\begin{align}
		\frac{\log^K Q}{Q^{\epsilon'}}\sum_{q\in G^l\cap [Q/2,Q]} d^K(q)\nonumber\\
		\leq \frac{\log^K Q}{Q^{\epsilon'}}2^{C'K\log Q/\log\log Q}\# G^l\cap [Q/2,Q].\label{I'1}
		\end{align}
		By using Lemma \ref{Discrepancy bound} we see that as $\epsilon<1-\omega$,
		\[
		\# G^l \cap [Q/2,Q]\leq \frac{2^{l+1+\omega}}{Q^{\omega}}Q+CQ^\epsilon\leq (2^{l+1+\omega}+C)Q^{1-\omega}.
		\]
		Next, we bound the first term on the RHS of (\ref{E3}),
		\begin{align}
		\sum_{r_1,\dots r_K\leq Q} \frac{\log r_1\log r_2\dots \log r_K}{r_1 r_2\dots r_K}\frac{Q}{[r_1,\dots,r_K]}\frac{2^{l+1+\omega}}{Q^{\omega}}\nonumber\\
		\leq Q^{1-\omega}2^{l+1+\omega} (C_2K^2)^K\label{II'1}
		\end{align}
		where $C_2>0$ is an absolute constant. Here, we have used the fact that
		\[
		\sum_{r\geq 1} \frac{\log r}{r^{1+K^{-1}}}=-\zeta'(1+K^{-1})= O(K^2)
		\]
		and that\footnote{This inequality is sharp. However, this is an overkill in the estimate of the sum $\sum_{r_1,\dots,r_K}$. For most of the tuples $(r_1,\dots,r_K),$ $[r_1,\dots,r_K]$ is in fact much larger than $(r_1\dots r_K)^{1/K}$. Intuitively speaking, very few tuples of integers have large GCD's.}
		\[
		[r_1,\dots,r_K]\geq (r_1\dots r_K)^{1/K}.
		\]
		Collecting \ref{I'1}),(\ref{II'1}) we see that
		\begin{align}
		\sum_{q\in G_{\omega}^l\cap [Q/2,Q]} F^K(q)\leq Q^{1-\omega}2^{l+1+\omega} (C_1K^2)^K+\frac{\log^K Q}{Q^{\epsilon'}}2^{C_1K\log Q/\log\log Q}(2^{l+1+\omega}+C)Q^{1-\omega}\label{RAW}
		\end{align}
		holds for all $Q\geq 1024,K,l\geq 1$ The above constants $C,C_1,C_2$ do not depend on $Q,K,l.$ We choose $K=K_Q$ as a function of $Q.$ Then as long as  $K\leq \epsilon'\log\log Q/2C_1$ we see that with another constant $C_3>0,$
		\begin{align}
		\sum_{q\in G^l\cap [Q/2,Q]} F^{K_Q}(q)\leq C_3 2^{l}Q^{1-\omega}((C_1K_Q^2)^{K_Q}+\log^{K_Q} Q/Q^{\epsilon'/2})\label{E4}.
		\end{align}
		Now we choose $K_Q=[\epsilon'\log\log Q/2C_1].$ From here we see that for $M$ with $\log M=8C_1/\epsilon'$,
		\begin{align}
		\#\{q: q\in G^l\cap [Q/2,Q], F(q)\geq C_1M (\log\log Q)^2\}\leq C_42^{l}Q^{1-\omega} \frac{1}{(\log Q)^4}, \label{E5}
		\end{align}
		where $C_4>0$ is a constant depending on both $\omega$ and $\epsilon'$ which in turn depends only on $\beta.$ From here we see that ($C_5>0$ is a different constant)
		\begin{align*}
		\sum_{\substack{q\in [Q/2,Q]\\F(q)\geq C_1M(\log\log q)^2}} \psi'(q)&\leq\sum_{0\leq l\leq \omega\log Q} \sum_{\substack{q\in [Q/2,Q]\cap G^l\\F(q)\geq C_1M(\log\log q)^2}} \frac{1}{q \log q (\log\log q)^2} \frac{q^{\omega}}{2^l}\\
		&\leq C_5\sum_{0\leq l\leq \omega \log Q} \frac{1}{Q \log Q (\log\log Q)^2} \frac{Q^{\omega}}{2^l} 2^l Q^{1-\omega}\frac{1}{(\log Q)^4}\\
		&\leq C_5\omega \frac{1}{(\log Q)^4 (\log\log Q)^2}.
		\end{align*}
		By considering $Q=2^k,k\geq 10$ we see that
		\[
		\sum_{F(q)\geq C_1M(\log\log q)^2} \psi'(q)<\infty.
		\]
		However, our assumption is that
		\[
		\sum_{q}\psi'(q)=\infty.
		\]
		Therefore we can make the assumption that $\psi'$ is supported on where $F(q)\leq C_1M (\log\log q)^2.$
		
		\subsubsection*{Step 2: Use Lemma \ref{Lemma}}
		
		Now we use Lemma \ref{Lemma} and see that
		\[
		\sum_{1\leq q'<q} |A_q\cap A_{q'}|=O\left(\psi'(q)+\frac{\psi'(q)}{(\log\log q)^2}F(q)\right)+4(1+C_0/(2H))\psi'(q)\sum_{1\leq q'< q} \psi'(q').
		\]
		As we assumed that $\psi'(q)>0$ only when $F(q)\leq C'M (\log\log q)^2,$ the above estimate can be made one step further,
		\begin{align*}
		\sum_{1\leq q'<q} |A_q\cap A_{q'}|&=O\left(\psi'(q)+C'M\psi'(q)\right)+16\psi'(q)\sum_{1\leq q'< q} \psi'(q')\\&= O(\psi'(q))+4(1+C_0/(2H))\psi'(q)\sum_{1\leq q'< q} \psi'(q').
		\end{align*}
		From here we see that (as $\sum_q \psi'(q)=\infty$)
		\[
		\frac{(\sum_{q\leq Q} |A_q|)^2}{\sum_{q,q'\leq Q} |A_q\cap A_{q'}|}\geq\frac{(\sum_{q=1}^Q 2\psi'(q))^2}{4(1+C_0/(2H))(\sum_{q=1}^Q\psi'(q))^2+O(\sum_{q=1}^Q \psi'(q))}\geq \frac{1}{1+\frac{C_0}{2H}+o(1)}.
		\]
		Then by Lemma \ref{Borel} we see that
		\[
		|\limsup_{q\to\infty} A_q|\geq \frac{1}{1+\frac{C_0}{2H}}
		\]
		This finishes the proof as we can choose $H$ to be arbitrarily large.
		
	\end{proof}
	
	\begin{proof}[Proof of Theorem \ref{MAIN2}]
		In the proof of Theorem \ref{MAIN}, we obtained (\ref{E3}) from the non-Liouville condition for $\beta.$ This estimate finally gives (\ref{E4}) and (\ref{E5}). Here, we need to choose $\omega,\epsilon$ to change along $q.$ For example, the construction of $G_\omega^l$ is now \[
		G_\omega^l=\left\{q:\|q\beta-\gamma'\|\in \left[\frac{2^l}{q^{\omega(q)}},\frac{2^{l+1}}{q^{\omega(q)}}\right]\right\}
		\]
		and $\epsilon(q)\leq\sigma_\beta(q)/(\sigma_\beta(q)+1)$, where $\sigma_\beta(q)$ is the Diophantine exponent for $\beta$ at height $q.$ In particular, we have $\sigma_\beta(q)\leq \sigma_{(\gamma,\beta)}(q)=\sigma(q)=O(\log\log\log q).$ Also, in the definition $\psi'$, $\omega$ is a function as well. In this way, $\psi'$ is not zero only when $\|q\beta-\gamma'\|\gg q^{-\omega(q)}.$ Since $\omega(q)$ decays to $0,$ the support of $\psi'$ now is smaller.
		
		Now the estimate (\ref{RAW}) holds with all the exponents $\omega,\epsilon'$ being a function of $q.$ However, what we really need is the values $\omega(q),\epsilon(q),\epsilon'(q)$ for $q\in [Q,2Q].$ For this reason, it is more convenient to have concrete function forms for them. More precisely, we choose a small number $c>0$ and define
		\[
		\epsilon(q)=\frac{(\log\log\log q)^{1/2}}{(\log\log\log q)^{1/2}+1}
		\]
		\[
		\epsilon'(q)=\frac{1-\omega(q)-\epsilon(q)}{2-\epsilon(q)}.
		\]
		We fix $\omega(q)=c(\log\log\log q)^{-1/2}$ so that $\epsilon'(q)>0.$ The problem is that $\epsilon'(q)\to 0$ as $q\to\infty.$ This will cause $M$ (which is now also depending on $q$) to be too large. For this reason we need to treat the Estimate (\ref{RAW}) more carefully. Let us consider two sequences $K_Q, H_Q, Q\geq 1.$ Then we see that by (\ref{RAW})
		\begin{align*}
		&\#\{q:q\in G^l_\omega\cap [Q,2Q],F(q)\geq H_Q\}&\\\leq &\frac{1}{H^{k_Q}_Q}\sum_{q\in G_{\omega}^l\cap [Q/2,Q]} F^{K_Q}(q)&\\
		\leq & \frac{Q^{1-\omega(2Q)}2^{l+1+\omega(Q)} (C'K_Q^2)^{K_Q}+\frac{\log^{K_Q} Q}{Q^{\epsilon'(2Q)}}2^{C'K_Q\log Q/\log\log Q}(2^{l+1+\omega(Q)}+C)Q^{1-\omega(2Q)}}{H^{K_Q}_Q}.&
		\end{align*}
		Now we want to achieve the following two asymptotics,
		\[
		\left(\frac{K^2_Q}{H_Q}\right)^{K_Q}=O(1/(\log Q)^4),
		\]
		\[
		\frac{\log^{K_Q} Q}{Q^{\epsilon'(2Q)}}2^{C'K_Q\log Q/\log\log Q}\frac{1}{H_Q^{K_Q}}=O(1/(\log Q)^4).
		\]
		The first asymptotic is  satisfied if we choose
		\[
		H_Q\geq K^2_Q (\log Q)^{4/K_Q}=e^{2\log K_Q+4\log\log Q/K_Q}.
		\]
		The second asymptotics is satisfied if we choose
		\begin{align*}
		H_Q\geq \log Q\times 2^{C'\log Q/\log\log Q}\times (\log Q)^{4/\log K_Q}/Q^{\epsilon'(2Q)/K_Q}\\=2^{\log\log Q+C'\log 2 \log Q/\log\log Q+4\log\log Q/K_Q}e^{-\epsilon'(2Q)\log Q/K_Q}.
		\end{align*}
		To satisfy both of the above two inequalities, it is enough to take
		\[
		K_Q=\frac{\epsilon'(2Q)\log Q}{\log\log Q+\frac{C'\log 2 \log Q}{\log\log Q}}
		\]
		and
		\[
		H_Q=e^{2\log K_Q+4\log\log Q/K_Q}.
		\]
		This makes
		\[
		K_Q\asymp \log\log Q/(\log\log\log Q)^{1/2},H_Q=o((\log\log Q)^{3})
		\]
		and
		\begin{align}
		\#\{q:q\in G^l_\omega\cap [Q,2Q],F(q)\geq (\log\log Q)^{3}\}\ll 2^{l}Q^{1-\omega(2Q)}\frac{1}{(\log Q)^4}. \label{E51'}
		\end{align}
		So we have found a replacement of (\ref{E5}). Then the rest of the argument after (\ref{E5}) in the proof of Theorem \ref{MAIN} can be used.  Thus we can assume that $\psi'$ is supported on the integers $q$ with $F(q)\ll (\log\log q)^3.$ After this step, we can use the first part of Lemma \ref{Lemma3} to deduce the result.
	\end{proof}
	
	\section{Monotonic approximation function, proofs of Theorems \ref{MAIN MONNTON}, \ref{MAIN MONO2}}

	As before, we will first show the proof for Theorem \ref{MAIN MONNTON} in detail and then illustrate how to add new arguments to proof Theorems \ref{MAIN MONO2}.
	\begin{proof}[Proof of Theorem \ref{MAIN MONNTON}]
		First, we want to compare $\psi(q)$ with $(q\log q)^{-1}.$ Suppose that 
		\[
		\psi(q)\geq \frac{1}{q\log q}
		\]
		for only finitely many integers $q.$ Then we can actually assume that
		\[
		\psi(q)\leq \frac{1}{q\log q}
		\]
		for all $q\geq 2.$ We call this case to be the finite case. Otherwise, we have the infinite case. The central idea for proving the result for those two cases are very similar but we need to treat them in different ways.
		
		\subsection*{The finite case}
		Suppose that $\psi(q)\leq (q\log q)^{-1}.$ Then according to Remark \ref{Remark}, we would want to restrict $\psi$ to where $F(q)\leq H'$ for a suitable constant $H'>0.$ We could not do this for proving Theorem \ref{MAIN}. The monotonicity of $\psi$ will play a crucial role.
		
		First we choose a positive number $\omega$ which is small enough so that the conclusion of Remark \ref{Remark} hold. We also need to use (\ref{E4}) in the proof of Theorem \ref{MAIN} with $K_Q=1.$ In order to achieve this, we need to choose $\omega$ to be small enough.
		
		As before, our goal now is to show that
		\begin{align}
		\sum_{F(q)\leq H'}\psi'(q)=\sum_{\substack{q:\|q\beta-\gamma'\|\in [q^{-\omega},1]\\ F(q)\leq H'}}\frac{\psi(q)}{\|q\beta-\gamma'\|}=\infty \label{*}
		\end{align}
		with a suitable positive number $H'.$ 
		We make use of (\ref{E4}) with $K_Q=1$ and see that for all $Q>1024,l\geq 0$
		\begin{align}
		\sum_{q\in G_{\omega}^l\cap [Q/2,Q]} F(q)\leq C 2^{l}Q^{1-\omega}(C+\log Q/Q^{\epsilon'/2})\leq C2^lQ^{1-\omega}\label{E4'}
		\end{align}
		where $C>0$ is a constant depending on $\gamma,\beta$ and $G_\omega^l$ is the same as defined in the previous section,
		\[
		G^l=G_\omega^l=\left\{q:\|q\beta-\gamma'\|\in \left[\frac{2^l}{q^{\omega}},\frac{2^{l+1}}{q^{\omega}}\right]\right\}.
		\]
		From Lemma \ref{Discrepancy bound} here we see that for some $C_1>0,$
		\[
		\#\{q\in G^l\cap [Q/2,Q]: F(q)>H'\}\leq \frac{C_1}{H'} 2^l Q^{1-\omega}.
		\]
		On the other hand with again Lemma \ref{Discrepancy bound}, we have for a constant $C_2>0$ depending on $\beta,\gamma,$
		\[
		\# G^l\cap [Q/2,Q]\geq C_2 2^l Q^{1-\omega}.
		\]
		In order to use Lemma \ref{Discrepancy bound} in above, $1-\omega$ must be larger than the discrepancy exponent of $\beta.$ This can be achieved by choosing $\omega$ to be small enough. As a result, we see that for each even number $Q>2048,$
		\begin{align}
		\sum_{\substack{q\in [Q/2,Q]\\F(q)\leq H'}} &\psi'(q) \nonumber\\
		&\geq \psi(Q/2)\sum_{0\leq l\leq \omega\log Q} \sum_{\substack{q\in [Q/2,Q]\cap G^l\\ F(q)\leq H'}} \frac{q^{\omega}}{2^l}\nonumber\\
		&\geq \psi(Q/2) \frac{Q^{\omega}}{2^{\omega}}\sum_{0\leq l\leq \omega\log Q} \frac{1}{2^l}\#\{q\in G^l\cap [Q/2,Q]: F(q)\leq H'\}\nonumber\\
		&\geq \psi(Q/2) \frac{Q^{\omega}}{2^{\omega}} \sum_{0\leq l\leq \omega\log Q} \frac{1}{2^l} 2^l Q^{1-\omega}(C_2-C_1/H').\label{E5'}
		\end{align}
		We now choose $H'$ to be large enough in a manner that depends on $C_1,C_2$ which in turn depends on $\beta,\gamma$ only, such that
		\[
		C_2-C_1/H'>0.5C_2.
		\]
		As a result, we see that
		\[
		\sum_{\substack{q\in [Q/2,Q]\\F(q)\leq H'}} \psi'(q)\geq 0.5\times 2^{-\omega}C_2\psi(Q/2) Q \log Q.
		\]
		Taking $Q=2^k$ for $k\geq 11$ we see that
		\[
		\sum_{\substack{q\in [2^{k-1},2^k]\\F(q)\leq H'}} \psi'(q)\geq 0.5\times 2^{-\omega}C_2\psi(2^{k-1}) 2^{k} k\geq 0.5\times 2^{-\omega}C_2 \psi(2^{k-1})2^{k-1} \log k.
		\]
		Notice that as $\psi$ is non-increasing,
		\[
		\psi(2^{k-1})2^{k-1} \log k\geq \sum_{2^{k-1}\leq q\leq 2^{k}} \psi(q) \log q.
		\]
		From here we see that
		\[
		\sum_{\substack{q\geq 2048\\ F(q)\leq H'}} \psi'(q)\geq 0.5\times 2^{-\omega}C_2\sum_{k\geq 10} \sum_{\substack{q\in [2^{k-1},2^k]\\ F(q)\leq H'}} \psi'(q)\geq 0.5\times 2^{-\omega}C_2\sum_{q\geq 2048} \psi(q)\log q=\infty.
		\]
		This establishes (\ref{*}). Then we see that we can further restrict $\psi'$ on integers $q$ with 
		\[
		F(q)\leq H'.
		\]
		Then we can perform the arguments in (Step 2) of the proof of Theorem \ref{MAIN} to conclude that
		\[
		|W(\psi',\gamma)|=1.
		\]
		This finishes the proof for the finite case. Before we continue the proof. Let us first see how the above arguments extend to deal with the case when $\beta$ is Liouville. As in the proof of Theorem \ref{MAIN MONO2}, we need to consider $\omega$ as a function of $q$ rather than a constant. More precisely, we need
		\[
		\omega(q)\leq 1/(\sigma_\beta(q)+1).
		\]
		We will return to this discussion later. 
		\subsection*{The infinite case}
		In this case, there are infinitely many $q$ with
		\[
		\psi(q)\geq \frac{1}{q\log q}.
		\]
		Then as $\psi$ is non-increasing we can find infinitely many $Q$ such that
		\[
		\psi(q)\geq \frac{1}{2q\log q}
		\]
		for $q\in [Q/2,Q].$ For such a $Q$, consider the approximation function
		\[
		\psi_Q(q)=\frac{1}{2Q\log Q}
		\]
		when $Q/2\leq q\leq Q$ and $\psi_Q(q)=0$ otherwise. As there are infinitely many such $Q,$ we can choose $4096\leq Q_1<Q_2<Q_3<\dots$ such that $2Q_i\leq Q_{i+1}$ for $i\geq 1.$ Consider the new approximation function
		\[
		\psi_*=\sum_{i\geq 1} \psi_{Q_i}.
		\]
		This approximation function satisfies $\psi\geq \psi_*.$ We define
		\[
		\psi'(q)=\frac{\psi_*(q)}{\|q\beta-\gamma'\|}
		\]
		if $\|q\beta-\gamma'\|\in [q^{-\omega},1]$ and otherwise $\psi'(q)=0.$
		By performing the same arguments as in the finite case, we use Estimate (\ref{E5'}) and see that for each $i\geq 1,$
		\begin{align}
		\sum_{\substack{q\in [Q_i/2,Q_i]\\ F(q)\leq H'}} \psi'(q)\geq 0.5\times 2^{-\omega}C_2\psi(Q_i/2) Q_i \log Q_i\\=2^{-\omega}C_2 \frac{1}{2Q_i\log Q_i} Q_i \log Q_i=2^{-\omega-1}C_2.\label{E6'}
		\end{align}
		Thus we see that
		\[
		\sum_{q\geq 2048: F(q)\leq H'}\psi'(q)=\infty.
		\]
		The rest of the argument will be the same as in the finite case and we conclude that
		\[
		|W(\psi',\gamma)|=1.
		\]
		This finishes the proof for the infinite case and from here we conclude the theorem.
	\end{proof}
	\begin{proof}[Proof of Theorem \ref{MAIN MONO2}]
		We include all notations from the proof of Theorem \ref{MAIN MONNTON}. First, we re-examine the proof of the finite case of Theorem \ref{MAIN MONNTON}. The benchmark function is no longer $1/q(\log q)$ but 
		\[
		\frac{(\log\log q)^{1/2}}{q\log q}.
		\]
		We saw that the arguments in the proof of Theorem \ref{MAIN MONNTON} still hold here but with $\omega$ being chosen to be a function such that
		\[
		\omega(q)\leq 1/(\sigma_\beta(q)+1).
		\]
		Since we have $\sigma_\beta(q)=O((\log\log q)^{1/2}),$ we see that it is possible to choose $\omega$ in a way that
		\[
		\omega(q)\asymp (1/\log\log q)^{1/2}.
		\]
		We can use the second part of Lemma \ref{Lemma3}. Thus we need to show that the approximation function (Note that $\omega$ is not a constant.It is a function.)
		\[
		\psi'=\psi'_{\beta,\gamma',\omega}
		\]
		is divergent on a subset where $F$ is uniformly bounded from above, i.e.
		\[
		\sum_{q: F(q)\leq H'} \psi'(q)=\infty.
		\]
		To do this, we need to use (\ref{E5'}) in the proof of Theorem \ref{MAIN MONNTON}. Here, $\omega$ need to be replaced by $\omega(Q).$ Then we see that
		\[
		\sum_{q\in [Q,2Q],F(q)\leq H'}\psi'(q)\gg \psi(Q/2)Q \log Q \omega(Q).
		\]
		Then we see that
		\[
		\sum_{F(q)\leq H'}\psi'(q)\gg \sum_{q} \psi(q)\log q \frac{1}{(\log\log Q)^{1/2}}=\infty.
		\]
		This finishes the proof for the finite case. The infinite case can be treated similarly. We examine each dyadic interval where $\psi(q)\geq \frac{(\log\log q)^{1/2}}{q\log (q)}.$ Then (E6') needs to be changed to
		\begin{align*}
		\sum_{q\in [Q_i/2,Q_i], F(q)\leq H'} \psi'(q)\gg \frac{(\log\log Q_1)^{1/2}}{Q_i\log Q_i} Q_i \log Q_i \omega(Q_i)\gg 1.
		\end{align*}
		From here the proof finishes.
	\end{proof}

	\section{A doubly metric result}
	In order to prove Corollary \ref{Coro2}, we need the following standard result.
	
	\begin{lemma}\label{LMA}
		Let $\gamma$ be an irrational and non-Liouville number. Then for Lebesgue almost all $\beta\in\mathbb{R},$ $(\gamma,\beta)$ is Diophantine.
	\end{lemma}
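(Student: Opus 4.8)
The plan is a direct Borel--Cantelli argument. Fix once and for all an exponent $\rho = 3$ (any $\rho > 2$ works) and argue first on the unit interval; at the end one takes a countable union over the translates $[m,m+1]$, $m \in \mathbb{Z}$, to reach all of $\mathbb{R}$. For each pair $(n_1,n_2) \in \mathbb{Z}^2 \setminus \{(0,0)\}$, write $N = \max\{|n_1|,|n_2|\}$ and set
\[
E_{n_1,n_2} = \{\beta \in [0,1] : \|n_1\gamma + n_2\beta\| < N^{-\rho}\}.
\]
The goal is to show $|\limsup E_{n_1,n_2}| = 0$, where the $\limsup$ runs over pairs ordered by $N \to \infty$.

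The first step is to estimate $|E_{n_1,n_2}|$. When $n_2 \neq 0$, the affine map $\beta \mapsto n_1\gamma + n_2\beta$ carries $[0,1]$ onto an interval of length $|n_2| \leq N$, which meets at most $|n_2|+1$ of the neighbourhoods $\mathbb{Z} + (-N^{-\rho}, N^{-\rho})$; pulling back and dividing by the scaling factor $|n_2|$ gives $|E_{n_1,n_2}| \leq 2N^{-\rho}(1 + |n_2|^{-1}) \leq 4N^{-\rho}$. When $n_2 = 0$ the set $E_{n_1,0}$ does not depend on $\beta$, hence equals $[0,1]$ or $\emptyset$; since $\gamma$ is not Liouville there are $\rho_0$ and $c_0 > 0$ with $\|n_1\gamma\| \geq c_0|n_1|^{-\rho_0}$ for all $n_1 \neq 0$, so $E_{n_1,0} = \emptyset$ once $|n_1|$ is large enough. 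Thus only finitely many of the sets with $n_2 = 0$ are nonempty, and they can be discarded as they do not affect the $\limsup$ over $N \to \infty$.

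Next I would sum. The number of pairs $(n_1,n_2)$ with $\max\{|n_1|,|n_2|\} = N$ is $8N$, so for all $N$ beyond the finitely many exceptions above,
\[
\sum_{\max\{|n_1|,|n_2|\}=N} |E_{n_1,n_2}| \leq 8N \cdot 4N^{-\rho} = 32\,N^{1-\rho},
\]
and $\sum_{N} N^{1-\rho} < \infty$ since $\rho > 2$. By the convergence half of Lemma~\ref{Borel}, for Lebesgue almost every $\beta \in [0,1]$ there are only finitely many pairs $(n_1,n_2)$ with $\|n_1\gamma + n_2\beta\| < \max\{|n_1|,|n_2|\}^{-\rho}$.

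Finally I would upgrade this to the Diophantine property. Removing a further null set (a countable union of affine lines), we may also assume $1,\gamma,\beta$ are $\mathbb{Q}$-linearly independent, so $\|n_1\gamma + n_2\beta\| > 0$ for every nonzero $(n_1,n_2)$. For such $\beta$ the finitely many offending pairs each have $\|n_1\gamma+n_2\beta\|$ strictly positive, so choosing $c > 0$ small enough yields $\|n_1\gamma+n_2\beta\| \geq c\max\{|n_1|,|n_2|\}^{-\rho}$ for all nonzero $(n_1,n_2)$; that is, $(\gamma,\beta)$ is Diophantine. A countable union of the exceptional null sets over the intervals $[m,m+1]$ then gives the claim for almost every $\beta \in \mathbb{R}$. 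The only place the hypothesis on $\gamma$ is used is the $n_2 = 0$ terms, and it is genuinely needed there: if $\gamma$ were Liouville those terms alone would force the Diophantine exponent of $(\gamma,\beta)$ to be infinite for every $\beta$. There is no serious obstacle; the one point requiring care is precisely keeping the $n_2 = 0$ contribution out of the $\limsup$, which is where non-Liouvilleness of $\gamma$ enters.
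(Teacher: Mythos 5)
Your approach is the same as the paper's: a convergence Borel--Cantelli argument over the sets of $\beta$ for which a fixed pair $(n_1,n_2)$ yields a good approximation to $0$, with the degenerate pairs handled separately via the hypothesis on $\gamma$. (A cosmetic difference: you fold the $n_1=0,\ n_2\ne 0$ pairs into the Borel--Cantelli sum, whereas the paper runs the sum only over $q_1q_2\ne 0$ and invokes non-Liouvilleness of $\beta$ --- typical for Lebesgue a.e.\ $\beta$ --- to cover $q_1=0$.)

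There is one real slip, in the $n_2=0$ step. You fix $\rho=3$ and assert that since $\gamma$ is not Liouville, $E_{n_1,0}=\emptyset$ for all large $|n_1|$; this says $\|n_1\gamma\|\ge |n_1|^{-3}$ eventually. But non-Liouville only gives $\|n_1\gamma\|\ge c_0|n_1|^{-\rho_0}$ for \emph{some} finite $\rho_0$, and $\rho_0$ can exceed $3$. If it does, then $\|n_1\gamma\|<|n_1|^{-3}$ for infinitely many $n_1$, so $E_{n_1,0}=[0,1]$ infinitely often, the ``finitely many offending pairs'' conclusion fails, and no positive $c$ can be extracted in the final step. The parenthetical ``any $\rho>2$ works'' is true for the Borel--Cantelli convergence but false for this step. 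The repair is trivial and mirrors what the paper does by ``redefining $H'$ if necessary'': either take $\rho>\max\{2,\rho_0\}$ from the outset (the sum $\sum_N N^{1-\rho}$ still converges), or keep $\rho=3$ for the Borel--Cantelli part and concede that the Diophantine exponent of $(\gamma,\beta)$ you obtain is $\max\{3,\rho_0\}$, not $3$; the lemma only asserts the existence of some finite exponent, so this suffices. With that correction, your proof matches the paper's.
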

	\begin{proof}
		First, it is clear that Lebesgue almost all numbers are $\mathbb{Q}$-linearly independent with respect to $\gamma.$ Next we let $H'$ to be a large positive number and we consider the set of $\beta\in [0,1]$ such that
		\[
		\|q_1\gamma+q_2\beta\|\leq \max\{|q_1|,|q_2|\}^{-H'}
		\] 
		for infinitely many integer pairs $(q_1,q_2)$ with $q_1q_2\neq 0.$ For such an integer pair $(q_1,q_2)$ we construct the following set
		\[
		A_{q_1,q_2}=\{\beta\in [0,1]: \|q_1\gamma+q_2\beta\|\leq \max\{|q_1|,|q_2|\}^{-H'}\}.
		\]
		It is then possible to see that (if we view $[0,1]$ as $\mathbb{T}$) $A_{q_1,q_2}$ is a union of $q_2$ many intervals (possibly with overlaps) of length
		\[
		\frac{2}{q_2 \max\{|q_1|,|q_2|\}^{H'}}.
		\]
		The Lebesgue measure of $A_{q_1,q_2}$ is then at most
		\[
		2\max\{|q_1|,|q_2|\}^{-H'}.
		\]
		Observe that by choosing $H'$ to be larger than $2$ we see that,
		\[
		\sum_{|q_1|,|q_2|\geq 1} \max\{|q_1|,|q_2|\}^{-H'}=\sum_{k\geq 1} \sum_{1\leq |q_1|\leq k} 2 k^{-H'}=\sum_{k\geq 1} 4k^{-H'+1}<\infty,
		\]
		Then the convergence part of Borel-Cantelli lemma implies that
		\[
		\left|\limsup_{|q_1|,|q_2|>0} A_{q_1,q_2}\right|=0.
		\]
		From here, we see that for Lebesgue almost all numbers $\beta\in [0,1]$ there are at most finitely many solutions ($q_1,q_2$) to
		\[
		\|q_1\gamma+q_2\beta\|\leq \max\{|q_1|,|q_2|\}^{-H'}.
		\] 
		Thus there is a constant $c>0$ with
		\[
		\|q_1\gamma+q_2\beta\|\geq c\max\{|q_1|,|q_2|\}^{-H'}
		\] 
		for all $|q_1|, |q_2|>0.$ We need to consider the situation when $q_1$ or $q_2$ is zero. We know that $\gamma$ is non-Liouville by assumption and we can also assume that $\beta$ is non-Liouville since non-Liouville numbers are Lebesgue typical. By redefining the constants $c,H'$ if necessary we see that for $|q_1|\geq 1, |q_2|\geq 1$
		\[
		\|q_1\gamma\|\geq c |q^{-H'}_1|
		\]
		as well as
		\[
		\|q_2\beta \|\geq c |q^{-H'}_2|.
		\]
		This implies that $(\gamma,\beta)$ is Diophantine. It is simple to extend the range for $\beta$ from $[0,1]$ to $\mathbb{R}.$ From here the proof finishes.
	\end{proof}
	
	\begin{proof}[Proof of Corollary \ref{Coro2}]
		Since $\gamma$ is non-Liouville, we can apply Theorem \ref{MAIN MONNTON} whenever $\beta$ and $(\beta,\gamma)$ are both Diophantine. From Lemma \ref{LMA} we also know that for Lebesgue almost all $\beta\in [0,1],$  the pair $(\gamma,\beta)$ is Diophantine. Thus $\beta$ is also non-Liouville and irrational. Therefore we see that Lebesgue almost all $\beta\in [0,1],$ both $\beta$ and $(\gamma,\beta)$ are Diophantine. We can then apply Theorem \ref{MAIN MONNTON} to those $\beta.$ We denote the set of such $\beta's$ as $G.$ Then we see that
		\[
		\|qx-\gamma\|\|q\beta-\gamma'\|<\psi(q)
		\]
		infinitely often for Lebesgue almost all $x$ whenever $\beta\in G.$ Then we can use Fubini's theorem to conclude Corollary \ref{Coro2}.
	\end{proof}

	\subsection*{Acknowledgements.}
	HY was financially supported by the University of Cambridge and the Corpus Christi College, Cambridge. HY has received funding from the European Research Council (ERC) under the European Union’s Horizon 2020 research and innovation programme (grant agreement No. 803711).

	\Addresses
	
\end{document}